\documentclass[aap,preprint]{imsart}

\RequirePackage{amsthm,amsmath,amsfonts,amssymb}

\RequirePackage[round,authoryear]{natbib}

\RequirePackage[colorlinks,citecolor=blue,urlcolor=blue]{hyperref}
\RequirePackage{graphicx}
\usepackage{bbm,bm}
\usepackage{enumerate}
\usepackage{caption,subcaption}
\usepackage{xcolor}
\usepackage{tikz}
\usepackage{graphics}
\usepackage{appendix}
\usetikzlibrary{arrows}
\newcommand{\midarrow}{\tikz \draw[-triangle 90] (0,0) -- +(.1,0);}

\numberwithin{equation}{section}
\numberwithin{figure}{section}

\startlocaldefs
\theoremstyle{plain}

\newtheorem{assumption}{Assumption}[]
\newtheorem{theorem}{Theorem}[section]
\newtheorem{lemma}[theorem]{Lemma}

\newtheorem{proposition}[theorem]{Proposition}
\theoremstyle{remark}
\newtheorem{remark}{Remark}[section]

\newcommand{\bbA}{\mathbf{A}}

\newcommand{\bbB}{\mathbf{B}}

\newcommand{\bbC}{\mathbf{C}}

\newcommand{\bbD}{\mathbf{D}}
\newcommand{\bbM}{\mathbf{M}}
\newcommand{\bbX}{\mathbf{X}}

\newcommand{\bbr}{\mathbf{r}}
\newcommand{\bbT}{\mathbf{T}}
\newcommand{\bbI}{\mathbf{I}}
\newcommand{\bbu}{\mathbf{u}}
\newcommand{\bbe}{\mathbf{e}}

\newcommand{\tdf}{\tilde{f}}
\newcommand{\tdh}{\tilde{h}}
\newcommand{\bigo}{\mathcal{O}}
\newcommand{\calU}{\mathcal{U}}
\newcommand{\calC}{\mathcal{C}}

\newcommand{\bE}{\mathbb{E}}
\newcommand{\bP}{\mathbb{P}}
\newcommand{\Cov}{{\rm Cov}}
\newcommand{\mtr}{\mathrm{tr}}
\newcommand{\Log}{\mathrm{Log}}

\begin{document}

	\begin{frontmatter}
		\title{Functional CLT for general sample covariance matrices}
		\runtitle{Rates of convergence in CLT for LSS}
		
		\begin{aug}
	
			\author[A]{\fnms{Jian}~\snm{Cui}\ead[label=e1]{cuij836@nenu.edu.cn}},
			\author[B]{\fnms{Zhijun}~\snm{Liu}\ead[label=e2]{liuzj@mail.neu.edu.cn}},
			\author[A]{\fnms{Jiang}~\snm{Hu}\ead[label=e3]{huj156@nenu.edu.cn}}, \and
			\author[A]{\fnms{Zhidong}~\snm{Bai}\ead[label=e4]{baizd@nenu.edu.cn}}

			\address[A]{KLASMOE and School of Mathematics and Statistics, Northeast Normal University, China.\printead[presep={,\ }]{e1,e3,e4}}
			
			\address[B]{College of Sciences, Northeastern University, China.\printead[presep={,\ }]{e2}}
		\end{aug}
		
		\begin{abstract}
		This paper studies the central limit theorems (CLTs) for linear spectral statistics (LSSs) of general  sample covariance matrices, when the test functions belong to $C^3$, the class of functions with continuous third order derivatives. We consider matrices of the form $\bbB_n=(1/n)\bbT_p^{1/2}\bbX_n\bbX_n^*\bbT_p^{1/2},$ where $\bbX_n= (x_{i j} ) $ is a $p \times n$ matrix whose entries are independent and identically distributed (i.i.d.) real or complex random variables, and $\bbT_p$ is a $p\times p$ nonrandom Hermitian nonnegative definite matrix with its spectral norm uniformly bounded in $p$. By using Bernstein polynomial approximation, we show that,  under $\mathbb{E}|x_{ij}|^{8}<\infty$, the centered LSSs of $\bbB_n$ have Gaussian limits. Under the stronger $\mathbb{E}|x_{ij}|^{10}<\infty$, we further establish convergence rates $\bigo(n^{-1/2+\kappa})$ in Kolmogorov--Smirnov $\bigo(n^{-1/2+\kappa})$, for any fixed $\kappa>0$. 		
	
		\end{abstract}
		
		\begin{keyword}
			\kwd{Sample covariance matrix}
			\kwd{linear spectral statistics}
			\kwd{CLT}
			\kwd{convergence rate}
			\kwd{Kolmogorov-Smirnov distance}
			\kwd{Bernstein polynomial}
		\end{keyword}
		
	\end{frontmatter}

\section{Introduction}
\subsection{Main results}
The central limit theorems (CLTs) for the linear spectral statistics (LSSs) of large-dimensional sample covariance matrices have now been extensively studied due to their significant roles in multiple fields
(e.g. \cite{Jonsson82Lb,BaiS04C,BaiM07A,BaiJ09C,BaiW10F,ZhengB15S,BaoL15S,HuL19H,LiuH23C,BaoH24S}). The objective of the present paper is twofold: first, to establish CLTs for LSSs of  general sample covariance matrices under $C^3$ test function; and second, to quantify the convergence rates of these CLTs in Kolmogorov-Smirnov distance, by means of Bernstein polynomial approximation. In this way, our results complete the functional CLT established in \cite{BaiW10F}.

Consider the \textit{general sample covariance matrix} given by
\begin{align*}
	\bbB_n=\frac{1}{n}\bbT_p^{1/2}\bbX_n\bbX_n^*\bbT_p^{1/2},
\end{align*}
where $\bbX_n$ is a $p\times n$ random matrix with independent and identically distributed (i.i.d.) real or complex entries, and $\bbT_p$ is a $p\times p$ nonrandom Hermitian nonnegative definite matrix. For a given test function $f$, the LSS of $\bbB_n$ is defined as
\begin{align*}
	\sum_{i=1}^pf(\lambda_i^{\bbB_n})=p\int f(x) dF^{\bbB_n}(x),
\end{align*}
where $\lambda_1^{\bbB_n}\geq\dots\geq \lambda_p^{\bbB_n}$ are the ordered eigenvalues of $\bbB_n$, and $$F^{\bbB_n}(x)=\frac{1}{p}\sum_{i=1}^{p}I({\lambda_i^{\bbB_n}}\leq x)$$ 
is the \textit{empirical spectral distribution} (ESD) of $\bbB_n$, where $I(\cdot)$ denotes the indicator function.

A natural starting point in the study of the ESD of $\bbB_n$ is its \textit{limiting spectral distribution} (LSD) of $\bbB_n$. This problem goes back to the seminal work of \cite{MarcenkoP67D}. A major subsequent development was obtained by \cite{Silverstein95S}, which demonstrated that under the conditions that as $n\to\infty$, $y_n=p/n\to y>0$  and
$H_p=F^{\bbT_p}\stackrel{d}{\rightarrow}H$, then  $F^{\bbB_n}\stackrel{d}{\rightarrow}F^{y,H}$ almost surely (a.s.), where "$\stackrel{d}{\rightarrow}$" represents convergence in distribution. A crucial tool in this analysis is the $\textit{Stieltjes transform}$ which is defined by
\begin{align*}
	s_{G}(z)=\int\frac{1}{\lambda-z}dG(\lambda), \quad\quad z\in\mathbb{C}^+,
\end{align*}	
for a given distribution function $G$. The Stieltjes transform of $F^{y,H}$, denoted by $s:=s_{F^{y,H}}(z)$, satisfies the equation:
\begin{align}\label{equation of s}
	s=\int \frac{1}{t(1-y-y z s)-z} d H(t).
\end{align}
Here  $H$ and $F^{y,H}$ are proper distribution functions. 	We refer to $F^{y,H}$ as the LSD of $\bbB_n$.
Since the matrix $\underline{\bbB}_n:=(1/n)\bbX_n^*\bbT_p\bbX_n$ shares the same nonzero eigenvalues as $\bbB_n$, the LSD of $\underline{\bbB}_n$ is given by
\begin{align*}
	\underline{F}^{y,H}=(1-y)\delta_0+yF^{y,H},
\end{align*}
where $\delta_0$ is a Dirac delta function.
Consequently, equation \eqref{equation of s} can be rewritten as 
\begin{align}\label{equation of s^0}
	\underline{s}^0=-\left(z-y\int\frac{t}{1+t\underline{s}^0}dH(t)\right)^{-1},
\end{align}
where $\underline{s}^0:=s_{\underline{F}^{y,H}}$ is the Stieltjes transform of 	$\underline{F}^{y,H}$, and  $s_n^0$ is the Stieltjes transform of $F^{y_n,H_p}$  is the LSD $F^{y,H}$ with parameters $\{y,H\}$ replaced by $\{y_n,H_p\}$.

Once the LSD has been identified, the next natural question is to study the fluctuation of the LSS around its deterministic limit. \cite{BaiS04C} proved that if the fourth moment of $x_{11}$ exists and $f$ is analytic on a certain interval, then under some mild technical conditions, 
\begin{align}\label{LSS tend to normal distribution}
	\int f(x)d G_n(x)\stackrel{d}{\rightarrow} \mathcal{N}(\mu(f),\sigma^2(f)),
\end{align}
where $\mu(f)$ and $\sigma^2(f)$ will be demonstrated in the following Theorem \ref{FunctionCLT}, and
\begin{align*}
	G_n(x)=p\left[F^{\bbB_n}(x)-F^{y_n,H_p}(x)\right].
\end{align*}

Now, we present the main theorems. Prior to this, we introduce the assumptions employed in our paper.

\begin{assumption}\label{assum8th}
	For each $n$, $\bbX_n=(x_{ij})_{p\times n}$, where $x_{ij}$ are i.i.d. with common moments
	\begin{align*}
		&\mathbb{E}x_{ij}=0, \quad \mathbb{E}\lvert x_{ij}\rvert^2=1,  \quad \mathbb{E}\lvert x_{ij}\rvert^{8}<\infty, \\
		& \quad \beta_x=\mathbb{E}\lvert x_{ij}\rvert^4-\lvert\mathbb{E}x_{ij}^2\rvert^2-2, \quad \alpha_x=\lvert\mathbb{E}x_{ij}^2\rvert^2.
	\end{align*}
\end{assumption}

\begin{assumption}\label{assumRDS}
As $min\left\{p,n\right\}\to\infty$,	the ratio of the dimension to sample size (RDS) $y_n=p/n\rightarrow y $, where $y_n\in (0,1)$.
\end{assumption}

\begin{assumption}\label{assumpopulationmatrix}
	$\bbT_p$ is $p\times p$ nonrandom Hermitian nonnegative definite matrix with its spectral norm bounded in $p$, with $H_p=F^{\bbT_p} \stackrel{d}{\rightarrow} H$, a proper distribution function.
\end{assumption}

\begin{assumption}\label{assumtestf}
	Let $f_1,\dots,f_k$ be in $C^3(\calU)$, where $\calU$ denotes any open interval including 
	\begin{align}\label{supportset}
		\left[\liminf_p \lambda_{min}^{\bbT_p}I_{(0,1)}(y)(1-\sqrt{y})^2, \limsup_p\lambda_{max}^{\bbT_p}(1+\sqrt{y})^2 \right],
	\end{align}
	and $C^3(\calU)$ denotes the set of functions $f:\calU\to \mathbb{C}$ which have continuous third order derivatives.
\end{assumption}

\begin{assumption}\label{assumRG}
	The matrix $\bbT_p$ and entries $x_{ij}$ are real.
\end{assumption}

\begin{assumption}\label{assumCG}
	The entries $x_{ij}$ are complex, with $\alpha_x=0$.
\end{assumption}

\begin{assumption}\label{assumhadma}
	As $min\left\{p,n\right\}\to\infty$,
	$$p^{-1} \mtr \left[(\bbT_p(\underline{s}_n^0(z_1)\bbT_p+\bbI)^{-1})\circ (\bbT_p(\underline{s}_n^0(z_2)\bbT_p+\bbI)^{-1})\right] \to h_1(z_1,z_2),$$
	$$p^{-1} \mtr \left[((\underline{s}_n^0(z)\bbT_p+\bbI)^{-2}\bbT_p) \circ ((\underline{s}_n^0(z)\bbT_p+\bbI)^{-1}\bbT_p) \right] \to h_2(z).$$
\end{assumption}

\begin{remark}
	The assumptions on $\bE |x_{ij}|^8<\infty$ and  $y_n\in (0,1)$ are, in fact, highly relevant to Lemma \ref{convergence_rate_ESD}.
\end{remark}

\begin{remark}
	As shown in \cite{ZhengB15S}, there is a counterexample that shows that the convergence  of LSS of sample covariance matrices does not happen when $x_{ij}$ are complex and $\alpha_x\neq 0$. 
\end{remark}

\begin{theorem}\label{FunctionCLT}
	$(\romannumeral1)$ Under Assumptions \ref{assum8th}--\ref{assumRG} and \ref{assumhadma}, then the random vector 
	$$\left(\int f_1(x) d G_n(x),\dots,\int f_k(x) d G_n(x)\right)$$ 
	converges weakly to a Gaussian vector $(X_{f_1},\dots,X_{f_k})$, with mean
	\begin{align}\label{mu}
		\bE X_f=&\frac{1}{2\pi i}\oint_{\calC} f(z) \frac{y\int \underline{s}^0(z)^3t^2(1+t\underline{s}^0(z))^{-3}d H(t)}{(1-y\int \underline{s}^0(z)^2t^2(1+t\underline{s}^0(z))^{-2}d H(t))^2}d z \\
		\nonumber &- \frac{\beta_x}{2\pi i}\oint_{\calC} f(z) \frac{y (\underline{s}^0(z))^3 h_2(z)}{1-y\int \underline{s}^0(z)^2t^2(1+t\underline{s}^0(z))^{-2} d H(t)} d z,
	\end{align}
	and covariance function
	\begin{align}\label{var}
		\Cov(X_f,X_g)=& -\frac{1}{2\pi^2}\oint_{\mathcal{C}_1}\oint_{\mathcal{C}_2}f^{\prime}(z_1)g^{\prime}(z_2)a(z_1,z_2)\int_{0}^{1}\frac{1}{1-ta(z_1,z_2)}d td z_1 d z_2 \\
		\nonumber &-\frac{y\beta_x}{4\pi^2}\oint_{\mathcal{C}_{1}}\oint_{\mathcal{C}_{2}}f^{\prime}(z_1)g^{\prime}(z_2) \underline{s}^0(z_1)\underline{s}^0(z_2)h_1(z_1,z_2)dz_2dz_1,
	\end{align}
	where 
	\begin{align*}
		a(z_1,z_2)=y\underline{s}^0(z_1)\underline{s}^0(z_2)\int\frac{t^2d H(t)}{(1+t\underline{s}^0(z_1))(1+t\underline{s}^0(z_2))}.
	\end{align*}
	The contours $\mathcal{C},\mathcal{C}_1,\mathcal{C}_2$ are closed and oriented in the positive direction in the complex plane, each enclosing the support set defined in \eqref{supportset}. We may assume $\mathcal{C}_1$ and $\mathcal{C}_2 $ to be nonoverlapping.
	
	$(\romannumeral2)$ Under Assumptions \ref{assum8th}--\ref{assumtestf} and \ref{assumCG}, \ref{assumhadma}, then $(\romannumeral1)$ also holds, except the mean is 
	\begin{align}
		\bE X_f= -\frac{\beta_x}{2\pi i}\oint_{\calC} f(z) \frac{y \underline{s}^0(z)^3 h_2(z)}{1-y\int \underline{s}^0(z)^2t^2(1+t\underline{s}^0(z))^{-2} d H(t)} d z
	\end{align}
	 and  the covariance function is 
	 \begin{align}
	 	\Cov(X_f,X_g)=&-\frac{1}{4\pi^2}\oint_{\mathcal{C}_{1}}\oint_{\mathcal{C}_{2}}f^{\prime}(z_1)g^{\prime}(z_2) a(z_1,z_2)\int_{0}^{1}\frac{1}{1-ta(z_1,z_2)}dt dz_2dz_1 \\
	\nonumber & -\frac{y\beta_x}{4\pi^2}\oint_{\mathcal{C}_{1}}\oint_{\mathcal{C}_{2}}f^{\prime}(z_1)g^{\prime}(z_2)  \underline{s}^0(z_1)\underline{s}^0(z_2)h_1(z_1,z_2)dz_2dz_1.	
	 \end{align}
 
\end{theorem}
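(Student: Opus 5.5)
Our strategy is to reduce the $C^3$ case to the analytic case of \cite{BaiS04C} (in the extended form of \cite{ZhengB15S} for general $\beta_x$, $\alpha_x$ and the limits $h_1,h_2$ of Assumption \ref{assumhadma}) by Bernstein polynomial approximation. Fix a compact interval $[a,b]\subset\calU$ containing \eqref{supportset} in its interior. We would first truncate, centralize and renormalize the entries at level $\eta_n\sqrt n$; under $\bE|x_{ij}|^8<\infty$ this changes $\int f\,dG_n$ by $o_P(1)$. By the Bai--Silverstein no-eigenvalue-outside-the-support results, with probability $1-o(n^{-\ell})$ all eigenvalues of $\bbB_n$ lie in $[a,b]$, for any fixed $\ell$. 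Hence we may replace each $f_j$ by a $C^3$ function supported in $\calU$ that agrees with $f_j$ on $[a,b]$. After an affine change of variables mapping $[a,b]$ to $[0,1]$, we let $B_m f$ denote the degree-$m$ Bernstein polynomial with $m=m_n\to\infty$ to be chosen. Since $B_m f$ is a polynomial, hence entire, Theorem 1.1 of \cite{BaiS04C}/\cite{ZhengB15S} applies for each fixed $m$. For polynomials it holds uniformly in the degree range we need, provided we track constants.

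The argument splits into three pieces:
\begin{align*}
\int f\,dG_n=\int B_mf\,dG_n+\int (f-B_mf)\,dG_n .
\end{align*}
\textbf{(a)} The polynomial term $\int B_m f\,dG_n$ is asymptotically normal with mean $\bE X_{B_mf}$ and variance $\sigma^2(B_mf)$. \textbf{(b)} The mean $\bE X_{B_mf}$ and covariances $\Cov(X_{B_mf},X_{B_mg})$ converge to those of $f,g$ given in \eqref{mu} and \eqref{var}. For (b) we would first integrate by parts so that the mean and covariance formulas involve only $f$ and $f'$ on the contour. The contour $\calC$ may be taken close to $[a,b]$, where the kernels are bounded, since $\underline{s}^0$ extends continuously to the real axis. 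Rewriting the contour integrals as real integrals against bounded (or integrable, $\log$-type) kernels, convergence follows from $\|B_mf-f\|_{C^1[a,b]}\to0$.

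\textbf{(c)} The remainder $\int (f-B_mf)\,dG_n\to0$ in probability. This is the crux. By the classical Bernstein estimates, for $f\in C^3$ we have $\|f-B_mf\|_\infty=\bigo(m^{-1})$, with explicit leading term $\frac{x(1-x)}{2m}f''$, and $\|(f-B_mf)'\|_\infty=\bigo(m^{-1})$ as well. Writing $h=f-B_mf$, we integrate by parts:
$$\int h\,dG_n=-\int_a^b h'(x)\,p\,[F^{\bbB_n}(x)-F^{y_n,H_p}(x)]\,dx.$$
Lemma \ref{convergence_rate_ESD} bounds $\bE\sup_x|F^{\bbB_n}-F^{y_n,H_p}|$ by $\bigo(n^{-1}\log^c n)$ or similar; this is precisely where $\bE|x_{ij}|^8$ and $y_n<1$ enter. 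The remainder is therefore $\bigo_P(m^{-1}\log^c n)$, which vanishes as soon as $m\gg\log^c n$.

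The competing constraint comes from step (a). For the polynomial CLT to be valid with $m=m_n\to\infty$, one would have to show that the martingale-difference/Stieltjes-transform argument of \cite{BaiS04C} gives a bound for polynomial test functions controlled by $\|B_mf\|$ on a contour at fixed distance from $[a,b]$. Such a bound is only polynomial in $m$ if the contour is pulled to distance $\asymp m^{-1}$, or else exponential. We would therefore try an alternative that avoids uniformity in $m$: a diagonal argument. For each fixed $m$, (a) gives the limit $X_{B_mf}$. By (b), $X_{B_mf}\Rightarrow X_f$ as $m\to\infty$. By (c), $\limsup_n P(|\int h_m\,dG_n|>\varepsilon)\le C/(m\varepsilon)$ uniformly in $n$. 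The standard approximation theorem (Billingsley, Theorem 3.2) then yields $\int f\,dG_n\Rightarrow X_f$ without any rate coupling. Joint convergence of the $k$ statistics follows by Cramér--Wold, applied to linear combinations $\sum c_jf_j$, which are again $C^3$.

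The main obstacle is (c), specifically obtaining a bound on $\int h\,dG_n$ that is uniform in $n$ with only $\|h\|_{C^1}$ control. Without the sharp rate $\bigo(1/n)$ up to logs for $p\sup_x|F^{\bbB_n}-F^{y_n,H_p}|$ in expectation, the factor $p$ would kill the argument. If Lemma \ref{convergence_rate_ESD} is weaker, we would instead bound $\mathrm{Var}(\int h\,dF^{\bbB_n})$ through a Helffer--Sjöstrand representation of $h$ using the $C^3$ norm, together with the variance bound for $\mtr(\bbB_n-z)^{-1}$ at $\Im z\ge n^{-1+\epsilon}$. The bias $\bE\int h\,dG_n$ would be bounded similarly through the mean expansion.
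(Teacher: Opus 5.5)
\textbf{There is a genuine gap in step (c), and your diagonal argument rests on it.}

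Lemma \ref{convergence_rate_ESD} does not give $\bE\sup_x|F^{\bbB_n}-F^{y_n,H_p}|=\bigo(n^{-1}\log^c n)$. It gives $\bigo(n^{-1/2})$ for $\sup_x|\bE F^{\bbB_n}-F^{y_n,H_p}|$, and only $\bigo_p(n^{-2/5})$ for the random ESD. After multiplying by $p$, your integration-by-parts bound on $\int h_m\,dG_n$ is $\bigo_p(n^{3/5}/m)$, which diverges as $n\to\infty$ for fixed $m$.

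Even granting the rate you quote, $m^{-1}\log^c n$ does not tend to $0$ for fixed $m$. So the hypothesis $\lim_m\limsup_n\bP(|\int h_m\,dG_n|>\varepsilon)=0$ of the approximation theorem does not follow. Your own remark that the remainder vanishes once $m\gg\log^c n$ already ties $m$ to $n$, which is incompatible with the fixed-$m$ scheme.

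The route is intrinsically lossy. Bounding $|\int h\,dG_n|$ by $\|h'\|_\infty\,p\sup_x|F^{\bbB_n}-F^{y_n,H_p}|$ discards the cancellation in a linear statistic. That supremum is not $\bigo_p(1)$ even under optimal rigidity, because the eigenvalue counting function fluctuates on the scale $\sqrt{\log n}$ at bulk points.

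What the diagonal argument actually needs is an a priori bound $\bE|\int h\,dG_n|\le K\|h\|_{C^3}$, uniform in $n$, i.e.\ a variance bound plus a bias bound for linear statistics. Since $(B_mf)^{(k)}\to f^{(k)}$ uniformly for $k\le 3$ when $f\in C^3$, such a bound would close your argument. Your fallback is the right kind of ingredient: Helffer--Sj\"ostrand, with an estimate like $\mathrm{Var}\,\mtr(\bbB_n-z\bbI)^{-1}\le K|\Im z|^{-4}$ played against $|\bar\partial\tilde h|\le K|\Im z|^2\|h'''\|_\infty$, plus a matching bias estimate. But it amounts to the whole argument of \cite{NajimY16G}, and you have not carried it out.

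\textbf{Comparison with the paper.} The paper never fixes $m$. It takes $m=[n^{3/5+\epsilon_0}]$, which is exactly what $p\cdot\bigo_p(n^{-2/5})\cdot\|(f-f_m)'\|_\infty\to0$ requires. It then runs the martingale CLT and the mean computation of \cite{BaiS04C} directly for $f_m$, with Stieltjes-transform estimates uniform on a fixed contour $\calC$.

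Your concern in (a) is well founded against that route. In the $y$-variable one only has $|\tdf_m(y)|\le\|\tdf\|_\infty(|y|+|1-y|)^m\approx\|\tdf\|_\infty e^{c\,m(\Im y)^2}$. So the threshold for uniform boundedness is height $\asymp m^{-1/2}$, not $m^{-1}$ as you wrote. On a fixed contour, uniform-in-$m$ boundedness is impossible unless $f$ is real-analytic on the segment, by the maximum principle and Vitali's theorem. Hence the paper's remark that $f_m$ is bounded on a bounded contour holds only for each fixed $m$. A complete version of that route needs a contour approaching the axis at rate $m^{-1/2}$, with estimates there.

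In either case your proposal does not close as written. With fixed $m$ the remainder is uncontrolled, and with $m=m_n$ you have not supplied a CLT that is uniform in the degree.
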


\begin{remark}
	It is worth noting that the forms of \eqref{mu} and \eqref{var} align with those in \cite{BaiS04C,PanZ08C}. Therefore, this theorem essentially weakens the condition on test function required in \cite{BaiS04C,PanZ08C}. 
\end{remark}

\begin{remark}
	In the course of the proof, we find that, if we replace  $(y,H,\underline{s}^0)$ by $(y_n,H_p,\underline{s}_n^0)$ in \eqref{mu} and \eqref{var}, the convergence rates of the corresponding covariance term and mean term are $\bigo(n^{-1/2})$ and $\bigo(n^{-1})$ separately.
\end{remark}

Let $\mathbb{K}(Y,Z)$ denote the Kolmogorov-Smirnov distance between two real random variables $Y$ and $Z$, defined as 
\begin{align*}
	\mathbb{K}(Y,Z):=\sup_{x\in\mathbb{R}}\lvert \mathbb{P}(Y\leq x)-\mathbb{P}(Z\leq x)\rvert.
\end{align*}
Throughout this paper, $Z$ represents a standard normal random variable and $\Phi$ denotes its distribution function. We now state the theorem on the convergence rates.

\begin{theorem}\label{RateFunctionCLT}
	 Suppose that $\mathbb{E}\lvert x_{ij}\rvert^{10}<\infty$, for any $1\leq i\leq p$, $1\leq j\leq n$.
	 
	 $(\romannumeral1)$ Under Assumptions \ref{assum8th}--\ref{assumRG}, for any fixed $\kappa>0$, there exists a positive constant $K$ independent of $n$, such that 
	 \begin{align*}
	 	\mathbb{K}\left(\frac{\int f(x)d G_n(x)-\mu_n(f)}{\sigma_n(f)},Z\right)=\bigo( n^{-1/2+\kappa}),
	 \end{align*}
	 where
	 \begin{align}\label{meanofn}
	\mu_n(f)=&-\frac{1}{2\pi i}\oint_{\mathcal{C}} f(z) \frac{y_n\int \underline{s}_n^0(z)^3t^2(1+t\underline{s}_n^0(z))^{-3}d H_p(t)}{(1-y_n\int \underline{s}_n^0(z)^2t^2(1+t\underline{s}_n^0(z))^{-2}d 
	H_p(t))^2}d z \\
	\nonumber &-\frac{\beta_x}{2\pi i} \oint_{\mathcal{C}} f(z) \frac{ (\underline{s}_n^0(z))^3 n^{-1} \mtr \left[((\underline{s}_n^0(z)\bbT_p+\bbI)^{-2}\bbT_p) \circ ((\underline{s}_n^0(z)\bbT_p+\bbI)^{-1}\bbT_p) \right]}{1-y_n\int \underline{s}_n^0(z)^2t^2(1+t\underline{s}_n^0(z))^{-2}dH_p(t)} dz
\end{align} 
and
\begin{align}\label{varinceofn}
	\sigma_n^2(f)= &-\frac{1}{2\pi^2}\oint_{\mathcal{C}_1}\oint_{\mathcal{C}_2}f^{\prime}(z_1)f^{\prime}(z_2)a_n(z_1,z_2)\int_{0}^{1}\frac{1}{1-ta_n(z_1,z_2)}d t d z_2 d z_1 \\
	\nonumber & -\frac{\beta_x}{4\pi^2}\oint_{\mathcal{C}_{1}}\oint_{\mathcal{C}_{2}}f^{\prime}(z_1)f^{\prime}(z_2) \underline{s}_n^0(z_1)\underline{s}_n^0(z_2)n^{-1} \\
	\nonumber &\quad\quad\quad\quad  \mtr \left[(\bbT_p(\underline{s}_n^0(z_1)\bbT_p+\bbI)^{-1})\circ (\bbT_p(\underline{s}_n^0(z_2)\bbT_p+\bbI)^{-1})\right] dz_2dz_1,
\end{align}
and
\begin{align*}
	a_n(z_1,z_2)=y_n\underline{s}_n^0(z_1)\underline{s}_n^0(z_2)\int\frac{t^2d H_p(t)}{(1+t\underline{s}_n^0(z_1))(1+t\underline{s}_n^0(z_2))},
\end{align*}
where $\calC$, $\calC_1$ and $\calC_2$ are defined in Theorem \ref{FunctionCLT}.
	 
	 $(\romannumeral2)$ Under Assumptions \ref{assum8th}--\ref{assumtestf} and \ref{assumCG}, $(\romannumeral1)$ also holds, except  
	 \begin{align*}
	     \mu_n(f)=-\frac{\beta_x}{2\pi i} \oint_{\mathcal{C}} f(z) \frac{ (\underline{s}_n^0(z))^3 n^{-1} \mtr \left[((\underline{s}_n^0(z)\bbT_p+\bbI)^{-2}\bbT_p) \circ ((\underline{s}_n^0(z)\bbT_p+\bbI)^{-1}\bbT_p) \right]}{1-y_n\int \underline{s}_n^0(z)^2t^2(1+t\underline{s}_n^0(z))^{-2}dH_p(t)} dz  
	 \end{align*}
	and
	\begin{align*}
		\sigma_n^2(f)= &-\frac{1}{4\pi^2}\oint_{\mathcal{C}_1}\oint_{\mathcal{C}_2}f^{\prime}(z_1)f^{\prime}(z_2)a_n(z_1,z_2)\int_{0}^{1}\frac{1}{1-ta_n(z_1,z_2)}d t d z_2 d z_1 \\
	\nonumber & -\frac{\beta_x}{4\pi^2}\oint_{\mathcal{C}_{1}}\oint_{\mathcal{C}_{2}}f^{\prime}(z_1)f^{\prime}(z_2) \underline{s}_n^0(z_1)\underline{s}_n^0(z_2)n^{-1} \\
	\nonumber &\quad\quad\quad\quad  \mtr \left[(\bbT_p(\underline{s}_n^0(z_1)\bbT_p+\bbI)^{-1})\circ (\bbT_p(\underline{s}_n^0(z_2)\bbT_p+\bbI)^{-1})\right] dz_2dz_1.            
	\end{align*}
	
\end{theorem}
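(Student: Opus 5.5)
We prove Theorem \ref{RateFunctionCLT} by approximating $f$ with Bernstein polynomials and controlling three errors: the CLT rate for the polynomial, the approximation error of the statistic, and the perturbation of the centering and scaling.

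\textbf{Step 1: localization and polynomial approximation.} Fix a compact interval $[a,b]\subset\calU$ containing the set \eqref{supportset} in its interior. Multiply $f$ by a smooth cutoff, so that we may take $f\in C^3$ with compact support in $\calU$. On the event $\Omega_n=\{\lambda_{max}^{\bbB_n}\le b,\ \lambda_{min}^{\bbB_n}\ge a\}$, which under $\bE|x_{ij}|^{10}<\infty$ (after truncation at $n^{1/2-\delta}$) has $\bP(\Omega_n^c)=\bigo(n^{-1/2})$, the LSS depends only on $f|_{[a,b]}$.

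Let $B_m f$ be the degree-$m$ Bernstein polynomial of $f$ on $[a,b]$. For $f\in C^3$, the Voronovskaya-type expansion gives
\begin{align*}
\|(f-B_mf)^{(j)}\|_\infty=\bigo(m^{-1}), \qquad j=0,1.
\end{align*}
Set $g_m=f-B_mf$. Write $\int g_m\,dG_n=p\int g_m\,d(F^{\bbB_n}-F^{y_n,H_p})$ and integrate by parts. Then by Lemma \ref{convergence_rate_ESD}, which gives the rate of $\sup_x|F^{\bbB_n}-F^{y_n,H_p}|$ (of order $n^{-1}$ up to logarithms in expectation), we obtain
\begin{align*}
\bE\Big|\int g_m\,dG_n\Big| \le p\,\|g_m'\|_\infty\,\bE\|F^{\bbB_n}-F^{y_n,H_p}\|\lesssim m^{-1}n^{\kappa}.
\end{align*}
Choosing $m=n^{1/2}$ makes this error $\bigo(n^{-1/2+\kappa})$. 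The same bound holds for $\mu_n(g_m)$ and $\sigma_n^2(f)-\sigma_n^2(B_mf)$, since both depend on $f$ only through $f$ or $f'$ integrated against kernels that are bounded on the contours.

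\textbf{Step 2: CLT rate for polynomials.} For the polynomial $P=B_mf$ of degree $m\approx n^{1/2}$, represent
\begin{align*}
\int P\,dG_n=-\frac{1}{2\pi i}\oint_{\calC}P(z)\,\big(\mtr(\bbB_n-z\bbI)^{-1}-p\,s_n^0(z)\big)\,dz,
\end{align*}
taking the contour at distance $\asymp n^{-1/2}$ or less from the real axis only where necessary. Since $P$ is entire with $|P(z)|\le\|f\|_\infty e^{cm|\Im z|}$, a contour at height about $1/m$ keeps $P$ bounded.

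Decompose $M_n(z)=\mtr(\bbB_n-z\bbI)^{-1}-\bE\,\mtr(\bbB_n-z\bbI)^{-1}$ into a martingale difference sum $\sum_j(\bE_j-\bE_{j-1})$ over columns, as in \cite{BaiS04C}. The statistic becomes $\sum_j Y_j$ plus remainders. Apply a martingale Berry--Esseen bound (Heyde--Brown / Haeusler type) with moment parameter $2$. This requires
\begin{align*}
\sum_j\bE|Y_j|^4=\bigo(n^{-1}) \quad\text{and}\quad \bE\Big|\sum_j\bE_{j-1}Y_j^2-\sigma_n^2\Big|^2=\bigo(n^{-1+\kappa}),
\end{align*}
which should yield a rate of $n^{-1/2+\kappa}$. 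The tenth moment enters in the fourth-moment bounds of quadratic forms $r_j^*\bbA r_j-\mtr\bbA$ after truncation.

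The mean is handled by $\bE\,\mtr(\bbB_n-z\bbI)^{-1}-p\,s_n^0(z)=\mu_n\text{-integrand}+\bigo(n^{-1})$ uniformly on the contour, with the remark's $\bigo(n^{-1})$ and $\bigo(n^{-1/2})$ rates used for the mean and covariance terms respectively.

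\textbf{Step 3: assembly.} Combine the steps using the smoothing inequality
\begin{align*}
\mathbb{K}(Y+W,Z)\le\mathbb{K}(Y,Z)+\bP(|W|>\epsilon)+C\epsilon.
\end{align*}
Take $\epsilon=n^{-1/2+\kappa}$ and use Markov's inequality on the Step 1 error with a higher moment (a variance bound suffices). Since $\sigma_n(f)$ is bounded below (non-degeneracy, assumed implicitly), normalization changes cost only $\bigo(|\sigma_n^2(f)-\sigma_n^2(B_mf)|)$.

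\textbf{Main obstacle.} The key difficulty is Step 2 uniformly in the degree $m\approx n^{1/2}$. The contour must approach the real axis to within about $1/m$, yet the martingale and resolvent bounds degrade like powers of $1/|\Im z|$. I would first try a contour at height $n^{-1/2}$ only on the segment inside $[a,b]$. On $\Omega_n$ I would use local-law-type bounds, $|\mtr(\bbB_n-z\bbI)^{-1}-p\,s_n^0|\prec 1/(n\eta)$ and $\|(\bbB_n-z\bbI)^{-1}\|\le\eta^{-1}$, absorbing the losses into $n^{\kappa}$. If that fails, I would balance by taking a smaller $m=n^{1/2-\kappa'}$ and use the $C^3$ smoothness to keep the approximation error at order $m^{-1}$.
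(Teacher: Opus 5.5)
Your Step 1 rests on a misreading of Lemma \ref{convergence_rate_ESD}. That lemma gives $\sup_x|F^{\bbB_n}(x)-F^{y_n,H_p}(x)|=\bigo_p(n^{-2/5})$ (and $\bigo_{a.s.}(n^{-2/5+\kappa})$); the rate $n^{-1/2}$ holds only for $\bE F^{\bbB_n}$. An $n^{-1+\kappa}$ bound would require rigidity estimates from a local law. These are not among the paper's tools, and for general $\bbT_p$ they are typically proved only under extra regularity assumptions.

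With the actual rate and $m=n^{1/2}$, your remainder has size $p\,\|g_m'\|_\infty\sup_x|F^{\bbB_n}-F^{y_n,H_p}|\asymp n\cdot n^{-1/2}\cdot n^{-2/5}=n^{1/10}$, which does not even vanish. The calibration also fails in Step 3. To absorb $W=\int g_m\,dG_n$ through Lemma \ref{Chen} at scale $\epsilon_n=n^{-1/2+\kappa}$ by first-moment Markov (Remark \ref{K-S_dist_equi} with $s=1$), you need $\bE|W|\lesssim\epsilon_n^2=n^{-1+2\kappa}$, not $\bE|W|\lesssim\epsilon_n$. This is why the paper takes $m=[n^{8/5-\kappa}]$, so that $n\cdot m^{-1}\cdot n^{-2/5+\kappa}=n^{-1+2\kappa}$. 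Even with high-probability control of $W$ you would need $m\gtrsim n^{11/10}$.

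With $m\gg n$ your contour plan no longer fits. Since $|\tdf_m(y)|\le\|\tdf\|_\infty(|y|+|1-y|)^m$ and $|y|+|1-y|\le 1+(\Im y)^2/(2\Re y\,(1-\Re y))$, the polynomial is only guaranteed bounded within roughly $m^{-1/2}\approx n^{-4/5}$ of the axis. Your own cruder bound $e^{cm|\Im z|}$ would even push the contour below $n^{-1}$. Either way this is far below the height $n^{-1/2}$ your Step 2 is designed for. Your fallback of decreasing $m$ goes the wrong way, since it enlarges the Step 1 error.

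Step 2 is also quantitatively off. The Heyde--Brown/Haeusler martingale bound is $C_q\bigl(\sum_j\bE|Y_j|^{2q}+\bE|V_n^2-1|^{q}\bigr)^{1/(2q+1)}$, with $V_n^2=\sum_j\bE_{j-1}Y_j^2$ suitably normalized. With fourth moments ($q=2$), your inputs $\sum_j\bE|Y_j|^4=\bigo(n^{-1})$ and $\bE|V_n^2-\sigma_n^2|^2=\bigo(n^{-1+\kappa})$ yield only about $n^{-1/5}$. If $V_n^2$ is controlled around $\sigma_n^2$ only at scale $n^{-1/2}$, the bound cannot beat $n^{-1/4}$ for any $q$.

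Reaching $n^{-1/2+\kappa}$ needs all of the following:
\begin{itemize}
\item $q\asymp\kappa^{-1}$ and $\sum_j\bE|Y_j|^{2q}\lesssim n^{1-q}$;
\item concentration of $V_n^2$ around a deterministic normalizer at scale $n^{-1}$ (up to $n^{\kappa}$), with the $\bigo(n^{-1/2})$ gap to $\sigma_n^2$ absorbed by rescaling;
\item uniformity of these bounds along a contour approaching the axis, where pointwise bounds degrade (e.g.\ $\bE|\varepsilon_j(z)|^2\asymp(n\Im z)^{-1}$ in the bulk).
\end{itemize}
None of this is supplied, so Step 2 is a plan rather than a proof.

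The paper does not attempt this step. It applies the rate theorem of \cite{CuiH25R} for analytic test functions directly to $f_m$ to get \eqref{fmrate}. It then bounds $|\mu_n(f_m)-\mu_n(f)|$ and $|\sigma_n^2(f_m)-\sigma_n^2(f)|$ and concludes with Lemma \ref{Chen}. Your concern that $f_m$ is not bounded uniformly in $m$ on a fixed-height contour is legitimate: the paper only notes that each $f_m$ is bounded there and does not track how the constant from \cite{CuiH25R} depends on $m$. But your proposal does not resolve that issue either.
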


\begin{remark}
	The additional assumption on  $\bE|x_{ij}|^{10}<\infty$ is related to assumptions in \cite{CuiH25R}. 
\end{remark}

\begin{remark}
	Compared to the form  mean and covariance in Theorem \ref{FunctionCLT}, the modifications on the mean and variance terms are motivated by the fact that the convergence rates of $y_n\to y$, $\underline{s}_n^0\to s^0$ and $H_p\stackrel{d}{\rightarrow} H$ remain unknown.
\end{remark}

\subsection{Literature review}
As research on the CLT for LSS of sample covariance matrices has deepened since \cite{BaiS04C}, many publications have increasingly examined the assumptions made for test functions. The earliest article on this field can be traced back to \cite{LytovaP09C}, required the test functions in $C^5$, using the Fourier transform. \cite{Shcherbina11C} further relaxed this condition of test function $f$ to $\lVert f\rVert_{3/2+\kappa}<\infty$, $\kappa>0$, where $\lVert f\rVert^2_{s}$ is the Sobolev $H^s$-norm defined via its Fourier transform $\widehat{f}$:
\begin{align*}
	\lVert f\rVert^2_{\epsilon}=\int (1+2|k|)^{2s}|\widehat{f}(k)|^2 d k, \quad s>3/2.	
\end{align*}
\cite{BaiW10F} weakened the analytic requirement on the test function $f$ from another perspective by employing a Bernstein polynomial approximation, thereby relaxing the condition to $f\in C^4$. It should be noted that \cite{LytovaP09C,BaiW10F,Shcherbina11C} all deal with the sample covariance matrix in the case where $\bbT_p=\bbI_p$. In addition to the approaches mentioned above, another method is based on the Helffer-Sj\"ostrand formula. \cite{NajimY16G} applied this approach to further weaken the requirement to condition $f\in C^3$ for the general sample covariance matrix; \cite{DingW25G} required $f\in C^2_c$ where the subscript $c$ represents that the functions have a compact support, given that $\bbT_p$ is a diagonal matrix.

In the field of random matrix theory (RMT), most existing studies focus primarily on establishing CLTs, whereas relatively few studies investigate the convergence rates of these CLTs. The earliest contributions in this direction concern random matrices on compact group, as discussed in \cite{DiaconisS94E,Johansson97R}. Later, \cite{BerezinB21R} investigated the convergence rate of the CLT for LSS of the Gaussian unitary ensembles (GUE), Laguerre unitary ensembles (LUE), and Jacobi unitary ensembles (JUE) in Kolmogorov-Smirnov distance. More recently, \cite{LambertL19Q} studied the CLT for LSS of $\beta$-ensembles in the one-cut regime, providing a rate of convergence in the quadratic Kantorovich or Wasserstein-2 distance by means of Stein's method;  \cite{BaoH23Q} derived a near-optimal convergence rate for the CLT for LSS of $n\times n$ Wigner matrices, in Kolmogorov-Smirnov distance, which is either $n^{-1/2+\kappa}$ or $n^{-1+\kappa}$ depending on the first Chebyshev coefficient of test function $f$ and the third moment of the diagonal matrix entries; \cite{SchnelliX23C} established a quantitative version of the Tracy-Widom law for the largest eigenvalue of large-dimensional sample covariance matrices, demonstrating convergence at the rate $\bigo(n^{-1/3+\kappa})$; \cite{BaoG24U} used the upper bounds of the ultra high order cumulants to derive the quantitative versions of the CLT for the trace of any given self-adjoint polynomial in generally distributed Wigner matrices; Most closely related to the present work, \cite{CuiH25R} established the convergence rate $n^{-1/2+\kappa}$ in the CLT for LSS of general sample covariance matrices.

\subsection{Organization}
	The remainder of the paper is organized as follows. Section \ref{Bernstein_sec} introduces the basic properties of Bernstein polynomial approximations used in our analysis. The proof of Theorem \ref{FunctionCLT} is shown in Section \ref{profFunCLT}, and Section \ref{ProfRateCLT} is devoted to the proof of Theorem \ref{RateFunctionCLT}. Section \ref{uselema} collects several technical lemmas used in throughout the paper. Appendix \ref{ProfQj} provides the proof of \eqref{Qjdecom} in Section \ref{profFunCLT}.

\subsection{Notation and conventions}
Throughout this paper, $n$ is considered the fundamental large parameter, and $K$ denotes a positive constant that does  not depend on $n$. Multiple occurrences of $K$, even within a single formula, do not necessarily represent the same value. For a parameter $a$, $K_a$ signifies a constant that may depend on $a$. The notation $A=\bigo(B)$ indicates $\lvert A\rvert\leq KB$, while $A_n=o(B_n)$ means $A_n/B_n \to 0$. $A_n=\bigo_p(B_n)$ means $|A_n|\leq KB_n$ in probability, $A_n=\bigo_{a.s.}(B_n)$ means $|A_n|\leq KB_n$ a.s.,  $A_n=o_p(B_n)$ means $A_n/B_n$ tending to $0$ in probability as $n\to\infty$. We use $\lVert \bbA\rVert$ for the spectral norm of a matrix $\bbA$, and $\lVert \bbu\rVert$ for the $L^2$-norm of a vector $\bbu$. To facilitate notation, we will let $\bbT=\bbT_p$. Under Assumption \ref{assumpopulationmatrix}, we may assume $\lVert \bbT\rVert\leq 1$ for all $p$. Let $\mathbb{E}_{0}(\cdot)$ be the expectation, $\mathbb{E}_{j}(\cdot)$ be the conditional expectation with respect to the $\sigma$-field generated by $\bbr_{1}, \dots, \bbr_{j}$. We use $\mathrm{tr}(\bbA), \bbA^{\top}$ and $\bbA^*$ to denote the trace, transpose and conjugate transpose of matrix $\bbA,$ respectively.

\subsection{Contribution and proof strategy}
We now summarize the main contribution of the paper and the blueprint of the proof. The principal contribution is the establishment of CLTs for the LSSs of general sample covariance matrices, together with quantitative convergence rates, under the test functions belong to $C^3$. In this sense, our work extends \cite{BaiW10F}, which treated the case $\bbT_p=\bbI_p$ with $f\in C^4$, and also relaxes the analyticity assumption imposed in \cite{CuiH25R}.

 Our argument begins with the Bernstein approximation of the test function $f$. Owing to Lemma \ref{convergence_rate_ESD} (originating from \cite{BaiH12C}), we are able to control the remainder terms. As a result, the problem reduces to the study of the approximating statistic $\int f_m(x)d G_n(x)$, where $f_m$ is shown in \eqref{fm}. 

\textit{$\bullet$ Functional CLTs for general sample covariance matrices.} Following the initial procedures, where the random variables are truncated at $\eta n^{1/4}$ and subsequently normalized, our next goal is to establish the connection between the centered LSS and the Stieltjes transform. By Cauchy's integral formula, the centered LSS can be rewritten as  
\begin{align*}
	\int f_m(x)d[G_n(x)-\mathbb{E}G_n(x)]=-\frac{p}{2\pi i}\oint_{\mathcal{C}}f_m(z)[s_{F^{\bbB_n}}(z)-\mathbb{E}s_{F^{\bbB_n}}(z)]dz,
\end{align*}
where the contour $\mathcal{C}$ is closed, oriented in the positive direction in the complex plane, and encloses the support set \eqref{supportset}. Compared with the Helffer-Sj\"ostrand formula employed in \cite{NajimY16G,DingW25G}, our presentation is more direct: although both methods rely on the Stieltjes transform, the latter ultimately leads to a double integral representation. By contrast, the contour integral formulation is used here is structurally simpler and is more convenient for calculations by using residue theorem and related techniques. Consistent with the approach based on martingale decomposition and integration by parts adopted in Section 2 of \cite{BaiS04C}, the centered Stieltjes transform can be represented as a sum of martingale difference sequences. Further details are given at the initial of Section \ref{secrandompart}.

We then apply the martingale CLT to derive the covariance function. Compared with \cite{BaiS04C,BaiW10F}, our approach differs in the following three aspects.
\begin{itemize}
	\item First, for the estimation of the Stieltjes transform, we provide bounds that are uniform in $z\in\calC$, rather than scaled by a factor depending on $\Im z$. Such estimates enable us to carry out the analysis along the entire contour $\calC$, rather than being restricted only to the horizontal segments of the contour; 
	\item Second, compared with the decomposition used in \cite{BaiS04C,BaiW10F}, we introduce formulas \eqref{construct an equation 1} and \eqref{construct an equation 2}, making the method conceptually more accessible;
	\item Third, after making several substitutions (replacing  $(y,H,\underline{s}^0)$ by $(y_n,H_p,\underline{s}_n^0)$), we identify the convergence rates of the corresponding mean and covariance terms as $\bigo(n^{-1})$ and $\bigo(n^{-1/2})$ respectively, seeing	\eqref{RGinvar}, \eqref{CGinvar}, \eqref{sup M_n^2 in RG case}, \eqref{sup M_n^2 in CG case}.
\end{itemize}

\textit{$\bullet$ Convergence rates of the functional CLT.} Building on the results of \cite{CuiH25R}, we first derive the convergence rates for the normalized approximating statistic $(\int f_m(x) dG_n(x)-\mu_n(f_m))/\sigma_n(f_m)$. By estimating the orders of the Bernstein remainder terms, $|\int f_m(x) dG_n(x)-\int f(x) dG_n(x)|$, $|\mu_n(f_m)-\mu_n(f)|$, and $|\sigma_n(f_m)-\sigma_n(f)|$, and using Lemma \ref{Chen}, we obtain the conclusion of Theorem \ref{RateFunctionCLT}.

 \section{Bernstein polynomial approximations}\label{Bernstein_sec} 
 The important tool of this paper is the Bernstein polynomials, which is 
 \begin{align*}
 	\tdf_m(y)=\sum_{k=0}^{m}{m \choose k} y^k (1-y)^{m-k}\tdf\left(\frac{k}{m}\right),
 \end{align*}
 where $\tdf(y)$ is a continuous function on the interval $\left[0,1\right]$. It is well known that $\tdf_m(y)\to \tdf(y)$ uniformly on $\left[0,1\right]$ as $m\to\infty$. A perspective on the probability of this polynomial is that
 \begin{align*}
 	\tdf_m(y)=\bE\left[ \tdf \left(\frac{X_m}{m}\right)\right],
 \end{align*}
 where $X_m$ follows a Bernoulli distribution $ \mbox{B}(m,y)$. 
 
 Suppose that $\tdf(y)\in C^3\left[0,1\right]$. A Taylor expansion gives 
 \begin{align*}
 	\tdf\left(\frac{k}{m}\right)= & \tdf(y)+\left(\frac{k}{m}-y\right)\tdf^{\prime}(y)+\frac{1}{2}\left(\frac{k}{m}-y\right)^2\tdf^{\prime\prime}(y) +\frac{1}{3!}\left(\frac{k}{m}-y\right)^3\tdf^{(3)}(\xi_y)
 \end{align*}
 where $\xi_y$ is a number between $k/m$ and $y$. Hence,
 \begin{align}\label{berstein_residus}
 	\tdf_m(f)-\tdf(y)=\frac{y(1-y)\tdf^{\prime\prime}(y)}{2m}+\bigo\left(\frac{1}{m^2}\right).
 \end{align}
 
 Let $\upsilon\in(0,1/2)$. For any $x\in\left[x_l,x_r\right]$, we perform a linear transformation $y=Lx+c$, where $L=(1-2\upsilon)/(x_r-x_l)$ and $c=((x_l+x_r)\upsilon-x_L)/(x_r-x_l)$, then $y\in[\upsilon,1-\upsilon]$. Define $\tdf(y)\triangleq f((y-c)/L)=f(x)$, $y\in[\upsilon,1-\upsilon]$ and
 \begin{align}\label{fm}
 	f_m(x)\triangleq \tdf_m(y)=\sum_{k=0}^{m}{m \choose k}y^k(1-y)^{m-k}\tdf\left(\frac{k}{m}\right).
 \end{align}
 From \eqref{berstein_residus}, we have
 \begin{align}\label{fm-f}
 	f_m(x)-f(x)=\tdf_m(y)-\tdf(y)=\frac{y(1-y)\tdf^{\prime\prime}(y)}{2m}+\bigo\left(\frac{1}{m^2}\right).
 \end{align}
 
 
 
 Let $\tdh(y)\triangleq y(1-y)\tdf^{\prime\prime}(y)$, similarly, $h_m(x)=\tdh_m(y)$. So LSS could be split into three parts: \begin{align}\label{aboutdelta}
 	\int f(x) dG_n(x) &=p\int f(x)\left[F^{\bbB_n}-F^{y_n,H_n}\right](d x)  \\
 	\nonumber &=p\int f_m(x) \left[F^{\bbB_n}-F^{y_n,H_n}\right](d x) \\
 	\nonumber &\quad -\frac{p}{2m}\int h_m(x) \left[F^{\bbB_n}-F^{y_n,H_n}\right](d x)  \\
 	\nonumber &\quad +p \int \left( f(x)-f_m(x)+\frac{1}{2m}h_m(x)\right)\left[F^{\bbB_n}-F^{y_n,H_n}\right](d x) \\
 	\nonumber 	&\triangleq \Delta_1+\Delta_2+\Delta_3.
 \end{align}

Let $v_0>0$. Clearly, the two polynomials $f_m(x)$ and $\tdf_m(y)$, defined only on the real line, can be extended to $\left[x_l,x_r\right]\times \left[-v_0,v_0\right]$ and $\left[\epsilon,1-\epsilon\right]\times \left[-Lv_0,Lv_0\right]$, respectively. Due that any analytic function is bounded on a compact domain, it follows that $f_m(z)$ and $h_m(z)$ are bounded on a bounded contour.
 
 \section{Proof of Theorem \ref{FunctionCLT}}\label{profFunCLT}
 
 \subsection{Truncation and normalization}\label{trunnorm}
 	 This subsection will be dedicated to the initialization of the random variables, including truncation and normalization. The following discussion will show that these procedures do not affect the establishment of the CLT.
	 
	 Under Assumption \ref{assum8th}, for any $\eta>0$, 
	 \begin{align*}
	 	\eta^{-8}\bE|x_{11}|^8I\{|x_{11}|\geq \eta n^{1/4}\}\to 0.
	 \end{align*}
	 According to Lemma 15 in \cite{LiB16C}, we can select a slowly decreasing sequence of constants $\eta_n\to 0$ such that
	 \begin{align}\label{truncation_condition}
	  	\eta_n^{-8}\bE|x_{11}|^8I\{|x_{11}|\geq \eta_n n^{1/4}\}\to 0.
	 \end{align}
	 Let $\widehat{x}_{ij}=x_{ij}I\{|x_{ij}|<\eta_n n^{1/4}\}$ and  $\widetilde{x}_{ij}=(\widehat{x}_{ij}-\mathbb{E}\widehat{x}_{ij})/\sigma_{ij}$, where $\sigma_{ij}^2=\mathbb{E}\lvert \widehat{x}_{ij}-\mathbb{E}\widehat{x}_{ij} \rvert^2$. We use $\widehat{\bbX}_n$ and $\widetilde{\bbX}_n$ to represent the random matrices with entries $\widehat{x}_{ij}$ and $\widetilde{x}_{ij}$, respectively. Let $\widehat{\bbB}_n=(1/n)\bbT^{1/2}\widehat{\bbX}_n\widehat{\bbX}_n^{*}\bbT^{1/2}$, $\widetilde{\bbB}_n = (1/n)\bbT^{1/2}\widetilde{\bbX}_n\widetilde{\bbX}_n^{*}\bbT^{1/2}$. Then, let $\widehat{G}_n(x)$ and $\widetilde{G}_n(x)$ denote the analogues of $G_n(x)$ with $\bbB_n$ replaced by $\widehat{\bbB}_n$ and $\widetilde{\bbB}_n$.
	 
	 We then have, due to \eqref{truncation_condition},
	 \begin{align}\label{trun}
	 	\bP\left(G_n\neq \widehat{G}_n\right)&=\bP\left(\bbB_n\neq\widehat{\bbB}_n\right)\leq np\bP\left(|x_{11}|\geq \eta_n n^{1/4}\right) \\
	 	\nonumber&\leq \eta_n^{-8}\bE\left[|x_{11}|^{8}I\{|x_{11}|\geq \eta_n n^{1/4}\}\right]=o(1),
	 \end{align}
	 \begin{align}\label{Ehatx}
	 	|\bE\widehat{x}_{ij}| &=\left| \bE x_{ij}I\{|x_{11}|\geq \eta_n n^{1/4}\}\right| \leq (\eta_n n^{1/4})^{-7}\bE\left[|x_{11}|^{8}I\{|x_{11}|\geq \eta_n n^{1/4}\}\right] \\
	 	\nonumber &=o(\eta_n n^{-7/4}).
	 \end{align}
	 Further,
	 \begin{align}\label{sig-1}
	 	\left(1-\sigma_{ij}^{-1}\right)^2&=\frac{(\mathbb{E}\lvert x_{ij}\rvert^2-\mathbb{E}\lvert\widehat{x}_{ij}-\mathbb{E}\widehat{x}_{ij}\rvert^2)^2}{\sigma_{ij}^2(1+\sigma_{ij})^2} \\
	 	\nonumber &\leq 2\frac{\left[ \mathbb{E}\lvert x_{ij}\rvert^2 I\left\lbrace \lvert x_{ij}\rvert \geq \eta_n n^{1/4}\right\rbrace \right]^2 + (\mathbb{E}\widehat{x}_{ij})^4 }{\sigma_{ij}^2(1+\sigma_{ij})^2}  \\
	 	\nonumber &\leq K\left[(\eta_nn^{1/4})^{-6}\bE\left[|x_{11}|^{8}I\{|x_{11}|\geq \eta_n n^{1/4}\}\right]\right]^2 +o(\eta_n^4 n^{-7}) \\
	 	\nonumber &=o(\eta_n^4 n^{-3}).
	 \end{align}
	 
	 Following the methodology and bounds from the proof of Lemma 2.7 in \cite{Bai99M}, we have, for each $j=1,\dots,k$,  
	 \begin{align*}
	 	&  \mathbb{E} \left| \int f_j(x)d\widehat{G}_n(x)-\int f_j(x)d\widetilde{G}_n(x)\right|  \\
	 	\nonumber =& \mathbb{E}\left| \sum_{k=1}^{p} [ f_j(\lambda_k^{\widehat{\bbB}_n})-f_j(\lambda_k^{\widetilde{\bbB}_n})] \right| \leq K \mathbb{E} \left| \sum_{k=1}^{p}(\lambda_k^{\hat{\bbB}_n}-\lambda_k^{\tilde{\bbB}_n}) \right|  \\
	 	\nonumber \leq& K \left[\sum_{k=1}^{p}\mathbb{E}\left(\sqrt{\lambda_k^{\widehat{\bbB}_n}}+\sqrt{\lambda_k^{\widetilde{\bbB}_n}}\right)^2\right]^{1/2}\left[\sum_{k=1}^{p} \mathbb{E}\bigg\lvert \sqrt{\lambda_k^{\hat{\bbB}_n}}-\sqrt{\lambda_k^{\tilde{\bbB}_n}}\bigg\rvert^2\right]^{1/2}  \\
	 	\nonumber  \leq& K\left[n^{-1}\mathbb{E} \operatorname{tr}\bbT^{1/2}(\widehat{\bbX}_n-\widetilde{\bbX}_n)(\widehat{\bbX}_n-\widetilde{\bbX}_n)^{*}\bbT^{1/2}\right]^{1/2}\\
	 	\nonumber &\quad\quad \left[2n^{-1}\mathbb{E} \operatorname{tr} \bbT^{1/2}(\widehat{\bbX}_n\widehat{\bbX}_n^{*})\bbT^{1/2}\right]^{1/2}.
	 \end{align*}
	 Using the above formulas \eqref{Ehatx} and \eqref{sig-1}, we can separately estimate the following two terms:
	 	\begin{align*}
	 	& n^{-1}\mathbb{E} \operatorname{tr}\bbT^{1/2}(\widehat{\bbX}_n-\widetilde{\bbX}_n)(\widehat{\bbX}_n-\widetilde{\bbX}_n)^{*}\bbT^{1/2}
	 	\leq n^{-1} \lVert \bbT\rVert\sum_{i,j} \mathbb{E}\bigg\lvert (1-\sigma_{ij}^{-1})\widehat{x}_{ij}+\frac{\mathbb{E} \widehat{x}_{ij}}{\sigma_{ij}}\bigg\rvert^2 \\
	 	\leq& 2n^{-1}\sum_{i,j}\left[ (1-\sigma_{ij}^{-1})^2\mathbb{E}\lvert \widehat{x}_{ij}\rvert^2+\frac{1}{\sigma_{ij}^2} \mathbb{E}\lvert \widehat{x}_{ij}\rvert^2 \right]  
	 	\leq 2n^{-1} np \left[ (1-\sigma_{11}^{-1})^2+\sigma_{11}^{-2}\lvert\mathbb{E} \widehat{x}_{11}\rvert^2 \right]  \\
	 	= &o(\eta_n^4 n^{-2}).
	 \end{align*}
	 and
	 \begin{align*}
	 	&  n^{-1}\mathbb{E} \operatorname{tr} \bbT^{1/2}(\widehat{\bbX}_n\widehat{\bbX}_n^{*})\bbT^{1/2} 
	 	\leq n^{-1}\lVert \bbT \rVert \sum_{i,j} \mathbb{E}\left[\lvert \widehat{x}_{ij}\rvert^2 +\lvert \widetilde{x}_{ij} \rvert^2 \right]  \\
	 	\leq& K n^{-1} np \left[ \mathbb{E}\lvert x_{11}\rvert^2+\frac{\mathbb{E}\lvert \widehat{x}_{11}-\mathbb{E}\widehat{x}_{11}\rvert^2}{\sigma_{11}} \right]  
	 	\leq K p\left[ \mathbb{E}\lvert x_{11}\rvert^2 + \frac{2\mathbb{E}\lvert \widehat{x}_{11}\rvert^2}{\sigma_{11}}\right] 
	 	= \bigo(p). 
	 \end{align*}
	 Therefore,
	 \begin{align}\label{norm}
	 	\mathbb{E}\left| \int f_j(x)d\widehat{G}_n(x)-\int f_j(x)d\widetilde{G}_n(x)\right|=o(\eta_n^{2}n^{-1/2}).
	 \end{align}
	 
	 From \eqref{trun} and \eqref{norm}, we obtain
	 \begin{align*}
	 	 \int f_j(x) d G_n(x)=\int f_j(x) d\widetilde{G}_n(x)+o_p(1),
	 \end{align*}
	 where $o_p(1)\stackrel{p}{\rightarrow} 0$ as $n\to\infty$. Henceforth, we assume the underlying variables are truncated at $\eta_n n^{1/4}$, centralized and normalized. For simplicity, we will suppress all sub- or superscripts on these variables and assume $\lvert x_{ij}\rvert<\eta_n n^{1/4}$, $\mathbb{E}x_{ij}=0$, $\mathbb{E}\lvert x_{ij}\rvert^2=1$, $\mathbb{E}\lvert x_{ij}\rvert^{8}<\infty$. For  Assumption \ref{assumCG}, $\mathbb{E} x_{ij}^2=o(n^{-3/2}\eta_n^2)$.
	 From  \cite{BaiY93L} and \cite{YinB88L}, after truncation and normalization,  
	 \begin{align}\label{outofsupport}
	 	\mathbb{P}(\lVert \bbB_n\rVert\geq \mu_1)=o(n^{-l})\quad \quad and \quad\quad \mathbb{P}(\lambda_{min}^{\bbB_n}\leq \mu_2)=o(n^{-l})
	 \end{align}
	 hold for any $\mu_1>\limsup_p\lVert \bbT_p\rVert(1+\sqrt{y})^2$,  $0<\mu_2<\liminf_p\lambda_{min}^{\bbT_p}I_{(0,1)}(y)(1-\sqrt{y})^2$ and $l>0$.

	 \subsection{Stieltjes transformation and LSS}\label{zanting}
 	In this subsection, we will introduce the relationship between  the LSS and the Stieltjes transformation. Define
	\begin{align}\label{M_n}
		M_n(z)=p[s_n(z)-s_n^0(z)]=n[\underline{s}_n(z)-\underline{s}_n^0(z)],
	\end{align}
	where $s_n(z):=s_{F^{\bbB_n}}(z)$ denotes the Stieltjes transform of $F^{\bbB_n}$, $s_n^0(z)$ denotes the Stieltjes transform of $F^{y_n,H_p}$, $\underline{s}_n(z)$ and $\underline{s}_n^0(z)$ denote the Stieltjes transform of $\underline{F}^{\bbB_n}$ and $\underline{F}^{y_n,H_p}$.

	By using the Cauchy's integral formula, we rewrite the LSS into 
	\begin{align*}
		\int f(x)dG_n(x)=-\frac{1}{2\pi i}\oint_{\mathcal{C}}f(z)M_n(z)dz,
	\end{align*}
	where the contour $\mathcal{C}$ is closed and taken in the positive direction in the complex plane containing the support of $G_n$. In what follows, we specify the selection of a suitable contour $\mathcal{C}$.

	Let $v_0>0$ and $\epsilon>0$ be arbitrary. Let $x_r=\lambda_{max}^{\bbT}(1+\sqrt{y})^2+\epsilon$. Let $x_l$ be any negative number if the left point of \eqref{supportset} is zero. Otherwise, choose $x_l=\lambda_{min}^{\bbT}I_{(0,1)}(y)(1-\sqrt{y})^2-\epsilon$. Define
	$$\mathcal{C}_u=\left\lbrace x+iv_0:x\in[x_l,x_r]\right\rbrace, 
	~~\mathcal{C}_l=\left\lbrace x_l+iv:v\in[0,v_0]\right\rbrace,$$
	$$\mathcal{C}_r= \left\lbrace x_r+iv:v\in[0,v_0]\right\rbrace,~~\mathcal{C}^{+}\equiv \mathcal{C}_l \cup \mathcal{C}_u \cup \mathcal{C}_r. $$
	Then  $\mathcal{C}=\mathcal{C}^{+}\cup \overline{\mathcal{C}^{+}}$, seeing Fig \ref{contour 1}.
	
	\begin{figure}[htbp]
		\centering
		\begin{tikzpicture}[>=latex]
			\draw[->] (-1,0)--(7,0)node[below]{$u$};
			\draw[->] (0,-2)--(0,2)node[left]{$v$};
			\node at (-.3, -.3) {$O$};
			\draw (0,1)node[left]{$v_0$}--(.1,1);
			\draw (0,-1)node[left]{$-v_0$}--(.1,-1);
			
			\node at (2,-.3)[left] {$x_l$};
			\draw (3,0)--(3,.1);
			\draw (5,0)--(5,.1);
			\draw (3,0)--node[below]{$\eqref{supportset}$}(5,0);
			\node at (6,-.3) [right] {$x_r$};
			\node at (6.1,.5) [right] {$\mathcal{C}$};

			\begin{scope}[color=black, thick, every node/.style={sloped,allow upside down}]
				\draw (2,1)-- node {\midarrow} (2,-1);
				\draw (2,-1)-- node {\midarrow} (6,-1);
				\draw (6,-1)-- node {\midarrow} (6,1);
				\draw (6,1)-- node {\midarrow} (2,1);
			\end{scope}

		\end{tikzpicture}
		\caption{Contour $\mathcal{C}$.}
		\label{contour 1}
	\end{figure}

	Let
	\begin{align}\label{Xi_n}
		\Xi_n=\left\lbrace \lambda_{min}^{B_n}\leq x_l+\epsilon/2\quad \mbox{or}\quad\lambda_{max}^{B_n}\geq x_r-\epsilon/2\right\rbrace.
	\end{align}
	Due to (\ref{outofsupport}), for any $l>0$, we have 
	\begin{align}\label{order of Xi_n}
		\mathbb{P}(\Xi_n)=o(n^{-l}).
	\end{align}
	Since the support of $F^{y_n,H_p}$ is contained within $[x_l-\epsilon/2,x_r+\epsilon/2]$, it follows from the CLT for $	\int f(x) dG_n(x)$ established by \cite{BaiS04C}, 
	\begin{align*}
		\int f(x) dG_n(x)  \stackrel{n^{-\frac{l}{4}}}{\sim} I(\Xi_n^{c})\int f(x)dG_n(x),
	\end{align*}
	where $\stackrel{\epsilon_n}{\sim}$ defined in Remark \ref{K-S_dist_equi}. For simplicity, we omit the notation  $I(\Xi_n^{c})$ in the sequel.

In order to use the Cauchy's integral formula, we need to replace $f$ with $f_m$ defined in \eqref{fm}. Therefore, it's necessary to estimate the terms $\Delta_2$ and $\Delta_3$ in \eqref{aboutdelta}. Taking $m=[n^{3/5+\epsilon_0}]$ for some $\epsilon_0>0$. According to Lemma \ref{convergence_rate_ESD},
$$\sup_{x}|F^{\bbB_n}(x)-F^{y_n,H_n}(x)|=\bigo_{p}(n^{-2/5}),$$
combining $h^{\prime}(x)=\bigo(m^{-1})$ and $\left(f(x)-f_m(x)+\frac{1}{2m}h_m(x)\right)^{\prime}=\bigo(m^{-2})$, 
\begin{align*}
	& \Delta_2=\bigo_p(n^{-\epsilon_0})=o_p(1), \\
	& \Delta_3=\bigo_p(n^{-3/5-2\epsilon_0})=o_p(1).	
\end{align*}
Thus, due that $f_m$ defined in \eqref{fm} is an analytic function,
\begin{align*}
	\int f(x)dG_n(x)=-\frac{1}{2\pi i}\oint_{\mathcal{C}}f_m(z)M_n(z)dz+o_p(1).	
\end{align*}

For $z\in\mathcal{C}$, we decompose $M_n(z)=M_n^1(z)+M_n^2(z)$, where
\begin{align*}
	M_n^1(z)=p\left[s_n(z)-\bE s_n(z)\right],
\end{align*}
and
\begin{align*}
	M_n^2(z)=p\left[\bE s_n(z)-s_n^0(z)\right].
\end{align*} 
LSSs are divide into two parts: the random part
\begin{align}\label{the random part}
	-\frac{1}{2\pi i}\oint_{\calC} f_m(z)M_n^1(z)dz,
\end{align}
and non-random part
\begin{align}\label{the nonrandom part}
	-\frac{1}{2\pi i}\oint_{\calC} f_m(z)M_n^2(z)dz.
\end{align}
 The subsequent analysis investigate the two components,  \eqref{the random part} and \eqref{the nonrandom part}, respectively.

 \subsection{Random part}\label{secrandompart}
  	After the above initial steps, we firstly introduce some notations adapted from \cite{BaiS04C}, which is used frequently. Let $\bbr_{j}=(1 / \sqrt{n}) \bbT^{1/2} \bbX_{\cdot j}$, where $\bbX_{\cdot j}$ denotes the $j$-th column of $\bbX_n$. Denote $\bbD(z)=\bbB_{n}-z\bbI$, $ \bbD_{j}(z)=\bbD(z)-\bbr_{j}\bbr_{j}^{*}$, $\bbD_{ij}(z)=\bbD(z)-\bbr_i\bbr_i^{*}-\bbr_j\bbr_j^{*}$, and
 \begin{align*}
 	\varepsilon_{j}(z)=&\bbr_{j}^{*} \bbD_{j}^{-1}(z) \bbr_{j}-\frac{1}{n} \operatorname{tr} \bbT \bbD_{j}^{-1}(z), ~~
 	\gamma_j(z)=\bbr_j^*\bbD_j^{-1}(z)\bbr_j-\frac{1}{n}\mathbb{E}\operatorname{tr} \bbT\bbD_j^{-1}(z),\\
 	\beta_{j}(z)=&\frac{1}{1+\bbr_{j}^{*} \bbD_{j}^{-1}(z) \bbr_{j}}, ~~ \widetilde{\beta}_{j}(z)=\frac{1}{1+n^{-1} \operatorname{tr} \bbT\bbD_{j}^{-1}(z)}, ~~ b_{j}(z)=\frac{1}{1+n^{-1} \mathbb{E} \operatorname{tr} \bbT \bbD_{j}^{-1}(z)}, \\
 	\beta_{ij}(z)=&\frac{1}{1+\bbr_{i}^{*} \bbD_{ij}^{-1}(z) \bbr_{i}}, ~~ \widetilde{\beta}_{ij}(z)=\frac{1}{1+n^{-1}\operatorname{tr}\bbT\bbD_{ij}^{-1}(z)}, ~~ b(z)=\frac{1}{1+n^{-1} \mathbb{E} \operatorname{tr} \bbT \bbD^{-1}(z)},\\
 	\epsilon_{ij}(z)=&\bbr_i^*\bbD_{ij}^{-1}(z)\bbr_i-\frac{1}{n}\operatorname{tr}\bbT\bbD_{ij}^{-1}(z), ~~ b_{ij}(z)=\frac{1}{1+n^{-1} \mathbb{E} \operatorname{tr} \bbT \bbD_{ij}^{-1}(z)}, \\
 	\gamma_{ij}(z)=&\bbr_i^*\bbD_{ij}^{-1}(z)\bbr_i-\frac{1}{n} \bE \operatorname{tr}\bbT\bbD_{ij}^{-1}(z).
 \end{align*}

 Now we are in the position to consider the random part \eqref{the random part}. Employing martingale decomposition, integration by parts, we have that 
 \begin{align}\label{decomposition of random part}
 	&  p\int f_m(x)d[F^{\bbB_n}(x)-\mathbb{E}F^{\bbB_n}(x)] 
 	=-\frac{p}{2\pi i}\oint_{\mathcal{C}}f_m(z)[s_{n}(z)-\mathbb{E}s_{n }(z)]dz  \\
 	\nonumber 
 	=& -\frac{1}{2\pi i}\sum_{j=1}^{n}\oint_{\mathcal{C}} f_m^{\prime}(z) (\mathbb{E}_j-\mathbb{E}_{j-1})\left[\varepsilon_j(z)b_j(z)+Q_j(z)\right]dz\\=& -\frac{1}{2\pi i}\sum_{j=1}^{n}\oint_{\mathcal{C}} f_m^{\prime}(z) (\mathbb{E}_j-\mathbb{E}_{j-1})\varepsilon_j(z)b_j(z)dz+o_p(1)\triangleq-\frac{1}{2\pi i}\sum_{j=1}^{n}Y_j+o_p(1), \label{Qjdecom}
 \end{align}
 where $Q_j(z)=R_j(z)+\varepsilon_j(z)(\widetilde{\beta}_j(z)-b_j(z))$, $R_j(z)=\Log (1+\varepsilon_j(z)\widetilde{\beta}_j(z))-\varepsilon_j(z)\widetilde{\beta}_j(z)$. Throughout the paper, the logarithm is defined using the principal branch. 
More precisely, for $z \in \mathbb{C} \setminus (-\infty,0]$, we define
$
\Log z = \ln |z| + i \text{arg} z,
$
where the argument satisfies
$
\text{arg} z \in (-\pi,\pi).
$
Note that the event $\{|\epsilon_j(z)\tilde{\beta}_j(z)|< 1/2\}$ holds with high probability (seeing in Appendix \ref{ProfQj}). Since the proof of \eqref{Qjdecom} is routine in random matrix theory, we postpone it to Appendix \ref{ProfQj}.
 Since $\sum_{j=1}^{n}Y_j$ is a sum of martingale difference sequence, to apply the martingale CLT (Theorem 35.12 of \cite{Billingsley95}, see Lemma \ref{yangclt}) to it, we need to check two conditions, the Lyapunov condition 
$$  \sum_{j=1}^{n}\mathbb{E}\left|Y_j\right|^4 \rightarrow0, $$ and conditional covariance condition. Firstly, we check the Lyapunov condition.
\begin{proof}[Proof of Lyapunov condition]
	Since 
	\begin{align*}
		 \sum_{j=1}^{n}\mathbb{E}\left|Y_j\right|^4=&\sum_{j=1}^{n}\mathbb{E}\left| \oint_\mathcal{C} f_m^{\prime}(z) (\mathbb{E}_j-\mathbb{E}_{j-1})\varepsilon_j(z)b_j(z)dz \right|^4\\
		 \leq& K \sum_{j=1}^{n}\oint_\mathcal{C}\mathbb{E}\left|\varepsilon_j(z)b_j(z) \right|^4dz \leq K \sum_{j=1}^{n}\oint_\mathcal{C}(\mathbb{E}\left|b_j(z) \right|^{12})^{1/3}(\mathbb{E}\left|\varepsilon_j(z) \right|^{6})^{2/3}dz
	\end{align*}
and from Lemma \ref{QMTr}, we have that $\mathbb{E}\left|\varepsilon_j \right|^6=\bigo(n^{-3}), $ then $\sum_{j=1}^{n}\mathbb{E}\left|Y_j\right|^4=\bigo(n^{-1})$. The proof of 	Lyapunov condition is finished.
\end{proof}
Secondly, we check the second condition.
\begin{proof}[Proof of conditional covariance]
Since
	\begin{align}\label{randpartterm}
		&-\frac{1}{4\pi^2}\sum_{j=1}^{n}\mathbb{E}_{j-1}\left[Y_j(f_m)Y_j(g_m)\right]\\
		\nonumber=&-\frac{1}{4\pi^2}\sum_{j=1}^{n}\mathbb{E}_{j-1}[\oint_{\mathcal{C}_1} f_m^{\prime}(z_1) (\mathbb{E}_j-\mathbb{E}_{j-1})\varepsilon_j(z_1)b_j(z_1)dz_1\oint_{\mathcal{C}_2} g_m^{\prime}(z_2) (\mathbb{E}_j-\mathbb{E}_{j-1})\varepsilon_j(z_2)b_j(z_2)dz_2]\\
		\nonumber=&-\frac{1}{4\pi^2}\oint_{\mathcal{C}_1}\oint_{\mathcal{C}_2} f_m^{\prime}(z_1) g_m^{\prime}(z_2)\sum_{j=1}^{n} \mathbb{E}_{j-1}[  \mathbb{E}_j\varepsilon_j(z_1)b_j(z_1)  \mathbb{E}_j\varepsilon_j(z_2)b_j(z_2)]dz_2dz_1,
	\end{align}
then we show that 
$ \eqref{randpartterm}\rightarrow \Cov(X_f,X_g) ~\text{in probability}, $	
where $\Cov(X_f,X_g)$ is defined in Theorem \ref{FunctionCLT}.

From Lemma \ref{E|trTDj-1-EtrTDj-1|},  we have
\begin{align}\label{|b_j-Ebeta_j|}
	&\lvert b_j(z_i)-\mathbb{E}\beta_{j}(z_i)\rvert =\lvert \mathbb{E}(b_j(z_i)-\beta_{j}(z_i))\rvert  \\
	\nonumber\quad =&\lvert b_j(z_i)\mathbb{E}[ \beta_{j}(z_i)(\bbr_j^{*}\bbD_j^{-1}(z_i)\bbr_j-n^{-1}\mathbb{E}\operatorname{tr}\bbT\bbD_j^{-1}(z_i))]\rvert \\
	\nonumber \leq& \lvert b_j(z_i)\mathbb{E}[\widetilde{\beta}_{j}(z_i)(\bbr_j^{*}\bbD_j^{-1}(z_i)\bbr_j-n^{-1}\mathbb{E}\operatorname{tr}\bbT\bbD_j^{-1}(z_i))]\rvert \\
	\nonumber & +\lvert b_j(z_i)\mathbb{E}[\beta_{j}(z_i)\widetilde{\beta}_j(z_i)\varepsilon_j(z_i)(\bbr_j^{*}\bbD_j^{-1}(z_i)\bbr_j-n^{-1}\mathbb{E}\operatorname{tr}\bbT\bbD_j^{-1}(z_i))]\rvert 
	\leq Kn^{-1},
\end{align}
and
\begin{align}\label{|b-Ebeta_j|}
	&\lvert b(z_i)-\mathbb{E}\beta_j(z_i)\rvert=\lvert b(z_i)-b_j(z_i)+b_j(z_i)-\mathbb{E}\beta_j(z_i)\rvert \\
	\nonumber \leq& \lvert b(z_i)-b_j(z_i)\rvert+\lvert b_j(z_i)-\mathbb{E}\beta_j(z_i)\rvert \leq Kn^{-1}.
\end{align}
From the formula 
$
	\underline{s}_n(z_i)=-\frac{1}{z_in}\sum_{j=1}^{n}\beta_{j}(z_i)
$
(see (2.2) in \cite{Silverstein95S}), we have $(1/n)\sum_{j=1}^{n}\mathbb{E}\beta_j(z_i)=-z_i\mathbb{E}\underline{s}_n(z_i)$.
Due to \eqref{|b-Ebeta_j|} and Lemma  \ref{|Es_n-s_n^0|}, we get
\begin{align}\label{b(z)+zs_n^0(z)}
	& \lvert b(z_i)+z_i\underline{s}_n^0(z_i)\rvert=\lvert b(z_i)+z_i\mathbb{E}\underline{s}_n(z_i)+z_i\underline{s}_n^0(z_i)-z_i\mathbb{E}\underline{s}_n(z_i)\rvert \\
	\nonumber\leq& \lvert b(z_i)+z_i\mathbb{E}\underline{s}_n(z_i)\rvert+\lvert z_i\mathbb{E}\underline{s}_n(z_i)-z_i\underline{s}_n^0(z_i)\rvert \\
	\nonumber\leq& \lvert\frac{1}{n}\sum_{j=1}^{n}(b(z_i)-\mathbb{E}\beta_j(z_i))\rvert+Kn^{-1}\leq Kn^{-1}.
\end{align}
Due to \eqref{b(z)+zs_n^0(z)} and Lemmas \ref{E|beta_j-b_j|} and \ref{|b_j-b|}, we conclude that 
\begin{align}\label{b_j(z)+zs_n^0(z)}
	& \lvert b_j(z_i)+z_i\underline{s}_n^0(z_i)\rvert=\lvert b_j(z_i)-b(z_i)+b(z_i)+z_i\underline{s}_n^0(z_i)\rvert \\ 
	\nonumber\leq& \lvert b_j(z_i)-b(z_i)\vert + \lvert b(z_i)+z_i\underline{s}_n^0(z_i)\rvert \leq Kn^{-1},
\end{align}
and
\begin{align}\label{E|tilde beta_j(z)+zs_n^0(z)|}
	& \mathbb{E}\lvert\widetilde{\beta}_j(z_i)+z_i\underline{s}_n^0(z_i)\rvert=\mathbb{E}\lvert\widetilde{\beta}_j(z_i)-b_j(z_i)+b_j(z_i)+z_i\underline{s}_n^0(z_i)\rvert \\
	\nonumber\leq& \mathbb{E}\lvert\widetilde{\beta}_j(z_i)-b_j(z_i)\rvert+\lvert b_j(z_i)+z_i\underline{s}_n^0(z_i)\rvert \leq Kn^{-1}. 
\end{align}
Using \eqref{b_j(z)+zs_n^0(z)} and Lemma \ref{Q_QMTr}, we have
\begin{align*}
	&\lvert (b_j(z_1)+z_1\underline{s}_n^0(z_1))(b_j(z_2)+z_2\underline{s}_n^0(z_2))\mathbb{E}[\mathbb{E}_j(\varepsilon_j(z_1))\mathbb{E}_j(\varepsilon_j(z_2))]\rvert \\
	\nonumber\leq& Kn^{-2} (\mathbb{E}\lvert \mathbb{E}_j(\varepsilon_j(z_1))\rvert^2)^{1/2}(\mathbb{E}\lvert \mathbb{E}_j(\varepsilon_j(z_1))\rvert^2)^{1/2}\leq Kn^{-3}.
\end{align*}
Similarly,
\begin{align*}
	\lvert (b_j(z_1)+z_1\underline{s}_n^0(z_1))(-z_2\underline{s}_n^0(z_2))\mathbb{E}(\mathbb{E}_j\varepsilon_j(z_1)\mathbb{E}_j(\varepsilon_j(z_2)))\rvert\leq Kn^{-2}.
\end{align*}
This implies
\begin{align}\label{b_j(z_1)b_j(z_2)-z_1z_2s_n^0(z_1)s_n^0(z_2)}
	&\mathbb{E}\left| \sum_{j=1}^{n}b_j(z_1)b_j(z_2)\mathbb{E}_{j-1}[\mathbb{E}_j(\varepsilon_j(z_1))\mathbb{E}_j(\varepsilon_j(z_2))] \right. \\
	\nonumber &~~~~~~~~ \left. -z_1\underline{s}_n^0(z_1)z_2\underline{s}_n^0(z_2)\sum_{j=1}^{n}\mathbb{E}_{j-1}[\mathbb{E}_j(\varepsilon_j(z_1))\mathbb{E}_j(\varepsilon_j(z_2))]\right| \\
	\nonumber&=\bigo(n^{-1}).
\end{align}
Therefore, to obtain the limit of $\Gamma_n(z_1,z_2)$, our goal changes to 
\begin{align}\label{Ejejz1Ejejz2}
	z_1z_2\underline{s}_n^0(z_1)\underline{s}_n^0(z_2)\sum_{j=1}^{n}\mathbb{E}_{j-1}[ \mathbb{E}_j(\varepsilon_j(z_1))\mathbb{E}_j(\varepsilon_j(z_2))].
\end{align}	
	Due to Lemma \ref{product of two quadratic minus trace of two matrices}, we have
\begin{align*}
	&  \sum_{j=1}^{n}z_1z_2\underline{s}_n^0(z_1)\underline{s}_n^0(z_2)\mathbb{E}_{j-1} [\mathbb{E}_j(\varepsilon_j(z_1))\mathbb{E}_j(\varepsilon_j(z_2))] \\
	\nonumber=& \sum_{j=1}^{n}z_1z_2\underline{s}_n^0(z_1)\underline{s}_n^0(z_2)\mathbb{E}_{j-1}[(\bbr_j^{*}\mathbb{E}_j(\bbD_j^{-1}(z_1))\bbr_j-n^{-1}\operatorname{tr}\bbT\mathbb{E}_j(\bbD_j^{-1}(z_1)))  \\
	\nonumber&\quad\quad\quad\quad\quad\quad\quad\quad\quad\quad\quad (\bbr_j^{*}\mathbb{E}_j(\bbD_j^{-1}(z_2))\bbr_j-n^{-1}\operatorname{tr}\bbT\mathbb{E}_j(\bbD_j^{-1}(z_2)))] \\
	\nonumber=& \frac{\sum_{j=1}^{n}z_1z_2\underline{s}_n^0(z_1)\underline{s}_n^0(z_2)}{n^2}\left[  \beta_x\operatorname{tr}[ (\bbT\mathbb{E}_j\bbD_j^{-1}(z_1))\circ(\bbT\mathbb{E}_j\bbD_j^{-1}(z_2))]\right. \\
	\nonumber& ~ \left. +\alpha_x \operatorname{tr} \bbT\mathbb{E}_j(\bbD_j^{-1}(z_1))(\bbT\mathbb{E}_j(\bbD_j^{-1}(z_2)))^{\top}+ \operatorname{tr} \bbT\mathbb{E}_j(\bbD_j^{-1}(z_1))\bbT\mathbb{E}_j(\bbD_j^{-1}(z_2))  \right] \\
	\nonumber &~+\bigo(n^{-1}).
\end{align*}
Under the Assumption \ref{assumRG}, we have
\begin{align*}
	&  \sum_{j=1}^{n}z_1z_2\underline{s}_n^0(z_1)\underline{s}_n^0(z_2)\mathbb{E}_{j-1}\left[\mathbb{E}_j(\varepsilon_j(z_1))\mathbb{E}_j(\varepsilon_j(z_2))\right] \\
	=& \beta_x n^{-2}\sum_{j=1}^{n}z_1z_2\underline{s}_n^0(z_1)\underline{s}_n^0(z_2) \operatorname{tr}[ (\bbT\mathbb{E}_j\bbD_j^{-1}(z_1))\circ(\bbT\mathbb{E}_j\bbD_j^{-1}(z_2))] \\
	&+   2n^{-2}\sum_{j=1}^{n}z_1z_2\underline{s}_n^0(z_1)\underline{s}_n^0(z_2) \operatorname{tr} \bbT\mathbb{E}_j(\bbD_j^{-1}(z_1))\bbT\mathbb{E}_j(\bbD_j^{-1}(z_2)).
\end{align*}
Under the Assumption \ref{assumCG}, we have
\begin{align*}
	&\nonumber \sum_{j=1}^{n}z_1z_2\underline{s}_n^0(z_1)\underline{s}_n^0(z_2)\mathbb{E}_{j-1}\left[\mathbb{E}_j(\varepsilon_j(z_1))\mathbb{E}_j(\varepsilon_j(z_2))\right] \\
	=& \beta_x n^{-2}\sum_{j=1}^{n}z_1z_2\underline{s}_n^0(z_1)\underline{s}_n^0(z_2) \operatorname{tr}[ (\bbT\mathbb{E}_j\bbD_j^{-1}(z_1))\circ(\bbT\mathbb{E}_j\bbD_j^{-1}(z_2))]\\
	&+ n^{-2}\sum_{j=1}^{n}z_1z_2\underline{s}_n^0(z_1)\underline{s}_n^0(z_2) \operatorname{tr} \bbT\mathbb{E}_j(\bbD_j^{-1}(z_1))\bbT\mathbb{E}_j(\bbD_j^{-1}(z_2)).
\end{align*}	
In the following, we only need to consider the limits of
\begin{align}\label{hadmardprod}
	n^{-2}\sum_{j=1}^{n}z_1z_2\underline{s}_n^0(z_1)\underline{s}_n^0(z_2) \operatorname{tr}[ (\bbT\mathbb{E}_j\bbD_j^{-1}(z_1))\circ(\bbT\mathbb{E}_j\bbD_j^{-1}(z_2))]
\end{align}
and
\begin{align}\label{EtrTE_jD_jTE_jD_j}
	n^{-2}\sum_{j=1}^{n}z_1z_2\underline{s}_n^0(z_1)\underline{s}_n^0(z_2) \operatorname{tr} \bbT\mathbb{E}_j(\bbD_j^{-1}(z_1))\bbT\mathbb{E}_j(\bbD_j^{-1}(z_2)).
\end{align}

Inspired by \cite{PanZ08C}, we first consider the limit of \eqref{hadmardprod}. Let $\bbe_i$ be the $1\times p$ vector whose $i$-th entry is 1 and all other entries are 0. Due to Lemmas \ref{Burkholder_1}, \ref{bound moment of b_j and beta_j}, \ref{QMTr_1}, \eqref{D_j^{-1}-D_kj^{-1}},
\begin{align*}
		&\bE\left|\operatorname{tr}[ (\bbT(\mathbb{E}_j\bbD_j^{-1}(z_1)-\bE\bbD_j^{-1}(z_1)))\circ(\bbT\mathbb{E}_j\bbD_j^{-1}(z_2))\right| \\
		\nonumber =&\bE\left|\sum_{i=1}^{p}\sum_{s=1}^j \bbe_i^*\bbT (\mathbb{E}_s-\bE_{s-1})\bbD_{js}^{-1}(z_1)\bbr_s\bbr_s^*\bbD_{js}^{-1}(z_1)\bbe_i\bbe_i^*\bbT\bE_j\bbD_j^{-1}(z_2)\bbe_i\right| \\
		\nonumber \leq& \sum_{i=1}^p \left(\sum_{s=1}^j \bE\left|(\mathbb{E}_s-\bE_{s-1})(\bbr_s^*\bbD_{js}^{-1}\bbe_i\bbe_i^*\bbT\bbD_{js}^{-1}\bbr_s-n^{-1}\operatorname{tr}\bbT\bbD_{js}^{-1}\bbe_i\bbe_i^*\bbT\bbD_{js}^{-1}))\right|^2\right)^{1/2} \\
		\nonumber &\quad\quad\quad \left(\bE\left|\bbe_i^*\bbT\bE_j\bbD_j^{-1}(z_2)\bbe_i\right|^2\right)^{1/2} \\
		\nonumber = &\bigo(n^{1/2}).
\end{align*}
Thus,
\begin{align}\label{EjDj-EDj}
	& \bE\left| n^{-2}\sum_{j=1}^{n}z_1z_2\underline{s}_n^0(z_1)\underline{s}_n^0(z_2) \operatorname{tr}[ (\bbT\mathbb{E}_j\bbD_j^{-1}(z_1))\circ(\bbT\mathbb{E}_j\bbD_j^{-1}(z_2))]\right.\\
	\nonumber &\quad \left. -n^{-2}\sum_{j=1}^{n}z_1z_2\underline{s}_n^0(z_1)\underline{s}_n^0(z_2) \operatorname{tr}[ (\bbT\mathbb{E}\bbD_j^{-1}(z_1))\circ(\bbT\mathbb{E}\bbD_j^{-1}(z_2))]\right|=\bigo(n^{-1/2}).
\end{align}
It remains to find the limit of 
\begin{align}
	\sum_{i=1}^{p}\bbe_i^* \bbT\mathbb{E}\bbD_1^{-1}(z_1)\bbe_i \bbe_i^*\bbT\mathbb{E}\bbD_1^{-1}(z_2)\bbe_i.
\end{align}

Define 
$
	\widetilde{\bbT}_n(z)=z\bE \underline{s}_n(z)\bbT+z\bbI,
$
and
$$ \widetilde{\epsilon}_{j}(z)=\bbr_j^*\bbD_{1j}^{-1}(z)\bbe_i\bbe_i^*\bbT(\widetilde{\bbT}_n(z))^{-1}\bbr_j-n^{-1} \bbe_i^*\bbT(\widetilde{\bbT}_n(z))^{-1}\bbT\bbD_{1j}^{-1}(z)\bbe_i.$$
Due to \eqref{beta_1}, \eqref{beta_ij decomposition}, Lemmas \ref{a(v)_quadratic_minus_trace}, \ref{quadratic form minus trace of qudratic form}, 
$$\bE\underline{s}_n(z)=-\frac{1}{zn}\sum_{j=1}^{n}\bE\beta_{j}(z)=-\frac{1}{zn}\sum_{j=2}^{n}\bE\beta_{j1}(z)+\bigo(n^{-1}).$$
From \eqref{r_i^*(C+r_ir_i^*)^-1}, 
\begin{align}\label{eiTED-Tnei}
	&\bbe_i^*\bbT(\bE\bbD_1^{-1}(z)-(-\widetilde{\bbT}_n(z))^{-1})\bbe_i \\	
	\nonumber =&\sum_{j=2}^{n}\bE\left[ \beta_{j1}\bbe_i^*\bbT (\widetilde{\bbT}_n(z))^{-1}\bbr_j\bbr_j^*\bbD_{1j}^{-1}(z)\bbe_i \right. \\
	\nonumber &\quad\quad\quad \left. -n^{-1}\beta_{j1}\bbe_i^* \bbT(\widetilde{\bbT}_n(z))^{-1}\bE \bbD_1^{-1}(z)\bbe_i \right]+\bigo(n^{-1}) \\
	\nonumber =& \tau_1+\tau_2+\tau_3+\bigo(n^{-1}),
\end{align}
where 
\begin{align*}
	\tau_1 & =\sum_{j=2}^{n}\bE\left[ \beta_{j1}b_{j1}\widetilde{\epsilon}_j(z)\gamma_{j1}(z)\right], \\
	\tau_2 &=\frac{1}{n}\sum_{j=2}^{n}\bE \left[\beta_{j1}\bbe_i^*\bbT(\widetilde{\bbT}_n(z))^{-1}( \bbD_{1j}^{-1}(z)-\bbD_{1}^{-1}(z))\bbe_i \right], \\
	\tau_3 &=\frac{1}{n}\sum_{j=2}^{n}\bE\left[\beta_{j1}\bbe_i^*\bbT(\widetilde{\bbT}_n(z))^{-1}(\bbD_{1}^{-1}(z)-\bE \bbD_{1}^{-1}(z))\bbe_i \right].
\end{align*}
According to \eqref{beta_ij decomposition}, \eqref{D_j^{-1}-D_kj^{-1}}, Lemmas \ref{bound spectral norm of zI-bj(z)T}, \ref{a(v)_quadratic_minus_trace}, \ref{E|trTDj-1-EtrTDj-1|}, \ref{quadratic form minus trace of qudratic form}, we have $|\tau_1|=\bigo(n^{-1})$, $|\tau_2|=\bigo(n^{-1})$, $|\tau_3|=\bigo(n^{-1})$. 

Thus, $\eqref{eiTED-Tnei}=\bigo(n^{-1})$. Further, 
\begin{align*}
	&\left| n^{-2}\sum_{j=1}^{n}z_1z_2\underline{s}_n^0(z_1)\underline{s}_n^0(z_2) \operatorname{tr}[ (\bbT\mathbb{E}\bbD_j^{-1}(z_1))\circ(\bbT\mathbb{E}\bbD_j^{-1}(z_2))]\right. \\
	&\quad \left. -n^{-1}z_1z_2\underline{s}_n^0(z_1)\underline{s}_n^0(z_2) \operatorname{tr}[ (\bbT(-\tilde{\bbT}_n(z_1))^{-1})\circ(\bbT(-\tilde{\bbT}_n(z_2))^{-1}))]\right|=\bigo(n^{-1}).
\end{align*}
Combining \eqref{EjDj-EDj}, the limit of \eqref{hadmardprod} is shown that
\begin{align}\label{resulthadmard}
	&\bE\left|\eqref{hadmardprod}-n^{-1}z_1z_2\underline{s}_n^0(z_1)\underline{s}_n^0(z_2) \operatorname{tr}[ (\bbT(-\tilde{\bbT}_n(z_1))^{-1})\circ(\bbT(-\tilde{\bbT}_n(z_2))^{-1}))] \right|\\
	\nonumber &=\bigo(n^{-1/2}).	
\end{align}

Next, we will find the limit of \eqref{EtrTE_jD_jTE_jD_j}. From Lemma \ref{E|trTDj-1-EtrTDj-1|}, we have
\begin{align}
	& \mathbb{E}\lvert n^{-1}\mathbb{E}_j\operatorname{tr}\bbT\bbD_j^{-1}(z_i)-n^{-1}\operatorname{tr}\bbT\bbD_j^{-1}(z_i)\rvert \\
	\nonumber =&\mathbb{E}\lvert n^{-1}\mathbb{E}_j\operatorname{tr}\bbT\bbD_j^{-1}(z_i)-(\widetilde{\beta}_j(z_i)^{-1}-1)\rvert \\
	\nonumber=&n^{-1}\mathbb{E}\left| \sum_{k=j+1}^{n}(\mathbb{E}_k-\mathbb{E}_{k-1})\operatorname{tr}\bbT\bbD_j^{-1}(z_i) \right|  \\
	\nonumber=&n^{-1}\mathbb{E}\left| \sum_{k=j+1}^{n}(\mathbb{E}_k-\mathbb{E}_{k-1})\operatorname{tr}\bbT(\bbD_j^{-1}(z_i)-\bbD_{kj}^{-1}(z_i)) \right| 
	\leq Kn^{-1}, 
\end{align}
therefore from \eqref{E|tilde beta_j(z)+zs_n^0(z)|}, we get
\begin{align}\label{E|n^-1E_jtrTD_j^-1-(-(z_is_n^0)^-1-1)|}
	\mathbb{E}\lvert n^{-1}\mathbb{E}_j\operatorname{tr}\bbT\bbD_j^{-1}(z_i)-(-(z_i\underline{s}_n^0(z_i))^{-1}-1)\rvert\leq Kn^{-1}.
\end{align}
Recall the definitions $\bbD_{ij}(z)=\bbD(z)-\bbr_i\bbr_i^{*}-\bbr_j\bbr_j^{*}$, $\beta_{ij}(z)=(1+\bbr_i^{*}\bbD_{ij}^{-1}(z)\bbr_i)^{-1}$ and $b_{ij}(z)=(1+n^{-1}\mathbb{E}\operatorname{tr}\bbT\bbD_{ij}^{-1}(z))^{-1}$.
Similar to Lemma \ref{|b_j-b|} we have
\begin{align}\label{b_j-b_ij}
	\lvert b_j(z_i)-b_{ij}(z_i)\rvert\leq Kn^{-1}.
\end{align}
Using Lemma \ref{Q_QMTr} and a similar argument resulting in Lemma \ref{E|beta_j-b_j|}, we find
\begin{align}\label{E|beta_ij-b_ij|}
	\mathbb{E}\lvert \beta_{ij}(z_i)-b_{ij}(z_i)\rvert^2\leq Kn^{-1}.
\end{align}
Define $ \breve{\bbD}_j(z) $ to have the same structure as the matrix $ \bbD_j(z) $ with random
vectors $ r_{j+1},\dots, r_n $ replaced by their i.i.d. copies $ \breve{r}_{j+1},\dots, \breve{r}_n $. Define $ \breve{\bbD}_{ij} $ and $ \breve{\beta}_{ij} $ accordingly.
Then part of \eqref{EtrTE_jD_jTE_jD_j} becomes
\begin{align}
	\frac{1}{n^2}\sum_{j=1}^{n}z_1z_2\underline{s}_n^0(z_1)\underline{s}_n^0(z_2)\operatorname{tr} \bbT\mathbb{E}_j(\bbD_j^{-1}(z_1)\bbT\breve{\bbD}_j^{-1}(z_2)).
\end{align}
We write
\begin{align}\label{construct an equation 1}
	&\frac{z_1}{n}\mathbb{E}_j \operatorname{tr}\bbT\bbD_j^{-1}(z_1)-\frac{z_2}{n}\mathbb{E}_j\operatorname{tr} \bbT\breve{\bbD}_j^{-1}(z_2)=z_2-z_1+(\underline{s}_n^0(z_2))^{-1}-(\underline{s}_n^0(z_1))^{-1}+A_{1,n},
\end{align}
where from \eqref{E|n^-1E_jtrTD_j^-1-(-(z_is_n^0)^-1-1)|} we have $\mathbb{E}\lvert A_{1,n}\rvert\leq Kn^{-1/2}$. On the other hand,  the left-hand side of (\ref{construct an equation 1}) can be written as 
\begin{align}\label{construct an equation 2}
	& \frac{z_1}{n}\mathbb{E}_j\operatorname{tr}\bbT\bbD_j^{-1}(z_1)-\frac{z_2}{n}\mathbb{E}_j\operatorname{tr} \bbT\breve{\bbD}_j^{-1}(z_2) \\
	\nonumber=&\frac{1}{n}\mathbb{E}_j\operatorname{tr} \bbT \bbD_j^{-1}(z_1)(z_1\breve{\bbD}_j(z_2)-z_2\bbD_j(z_1))\breve{\bbD}_j^{-1}(z_2) \\
	\nonumber=&\frac{1}{n}\sum_{i=1}^{j-1}(z_1-z_2)\mathbb{E}_j \bbr_i^{*}\breve{\bbD}_{ij}^{-1}(z_2)\bbT\bbD_{ij}^{-1}(z_1)\bbr_i\beta_{ij}(z_1)\breve{\beta}_{ij}(z_2) \\
	\nonumber&+ \frac{1}{n}\sum_{i=j+1}^{n}z_1\mathbb{E}_j\breve{\bbr}_i^{*}\breve{\bbD}_{ij}^{-1}(z_2)\bbT\bbD_j^{-1}(z_1)\breve{\bbr}_i\breve{\beta}_{ij}(z_2) \\
	\nonumber&-\frac{1}{n}\sum_{i=j+1}^{n} z_2\mathbb{E}_j\bbr_i^{*}\breve{\bbD}_j^{-1}(z_2)\bbT\bbD_{ij}^{-1}(z_1)\bbr_i\beta_{ij}(z_1) \\
	\nonumber=&\frac{1}{n^2}\sum_{i=1}^{j-1}(z_1-z_2)b_{ij}(z_1)b_{ij}(z_2)\operatorname{tr}\mathbb{E}_j\bbT\breve{\bbD}_{ij}^{-1}(z_2)\bbT\bbD_{ij}^{-1}(z_1) \\
	\nonumber&+\frac{1}{n^2}\sum_{i=j+1}^{n}z_1b_{ij}(z_2)\operatorname{tr}\mathbb{E}_j\bbT\breve{\bbD}_{ij}^{-1}(z_2)\bbT\bbD_{j}^{-1}(z_1) \\
	\nonumber&-\frac{1}{n^2}\sum_{i=j+1}^{n}z_2b_{ij}(z_1)\operatorname{tr}\mathbb{E}_j\bbT\breve{\bbD}_j^{-1}(z_2)\bbT\bbD_{ij}^{-1}(z_1)+A_{2,n} \\
	\nonumber=&\frac{1}{n^2}\sum_{i=1}^{j-1}(z_1-z_2)z_1z_2\underline{s}_n^0(z_1)\underline{s}_n^0(z_2)\operatorname{tr}\mathbb{E}_j\bbT\breve{\bbD}_{ij}^{-1}(z_2)\bbT\bbD_{ij}^{-1}(z_1) \\
	\nonumber &-\frac{1}{n^2}\sum_{i=j+1}^{n}z_1z_2\underline{s}_n^0(z_2)\operatorname{tr}\mathbb{E}_j\bbT\breve{\bbD}_{ij}^{-1}(z_2)\bbT\bbD_{j}^{-1}(z_1) \\
	\nonumber&+\frac{1}{n^2}\sum_{i=j+1}^{n}z_1z_2\underline{s}_n^0(z_1)\operatorname{tr}\mathbb{E}_j\bbT\breve{\bbD}_j^{-1}(z_2)\bbT\bbD_{ij}^{-1}(z_1)+A_{3,n}.
\end{align}
From \eqref{beta_ij decomposition},  \eqref{E|beta_ij-b_ij|}, Lemmas \ref{bound moment of b_j and beta_j} and  \ref{a(v)_quadratic_minus_trace}, we have $\mathbb{E}\lvert A_{2,n}\rvert\leq Kn^{-1/2}$. Further, from \eqref{b_j(z)+zs_n^0(z)} and \eqref{b_j-b_ij}, we have $\mathbb{E}\lvert A_{3.n}\rvert\leq Kn^{-1/2}$.
Using \eqref{D_j^{-1}-D_kj^{-1}}, we find 
\begin{align}\label{construct an equation 3}
	\eqref{construct an equation 2} & =\frac{j-1}{n^2}(z_1-z_2)z_1 z_2\underline{s}_n^0(z_1)\underline{s}_n^0(z_2)\operatorname{tr} \bbT\mathbb{E}_j(\bbD_j^{-1}(z_1))\bbT\mathbb{E}_j(\breve{\bbD}_j^{-1}(z_2)) \\
	\nonumber&\quad - \frac{n-j}{n^2}z_1 z_2\underline{s}_n^0(z_2)\operatorname{tr}\bbT\mathbb{E}_j(\bbD_j^{-1}(z_1))\bbT\mathbb{E}_j(\breve{\bbD}_j^{-1}(z_2)) \\
	\nonumber&\quad +\frac{n-j}{n^2}z_1 z_2\underline{s}_n^0(z_1)\operatorname{tr}\bbT\mathbb{E}_j(\bbD_j^{-1}(z_1))\bbT\mathbb{E}_j(\breve{\bbD}_j^{-1}(z_2))+A_{4,n},
\end{align}
where $\mathbb{E}\lvert A_{4,n}\rvert\leq Kn^{-1/2}$.
Combining \eqref{construct an equation 1} and \eqref{construct an equation 3},  we have
\begin{align}
	& n^{-2}\sum_{j=1}^{n}z_1z_2\underline{s}_n^0(z_1)\underline{s}_n^0(z_2)\operatorname{tr}\bbT\mathbb{E}_j(\bbD_j^{-1}(z_1))\bbT\mathbb{E}_j(\breve{\bbD}_j^{-1}(z_2)) \\
	\nonumber=&\frac{n^{-2}\sum_{j=1}^{n}z_1 z_2 \underline{s}_n^0(z_2) \underline{s}_n^0(z_1)(z_2-z_1+(\underline{s}_n^{0}(z_2))^{-1}-(\underline{s}_n^{0}(z_1))^{-1})}{\frac{j-1}{n^2}(z_1-z_2) z_1 z_2 \underline{s}_n^0(z_1) \underline{s}_n^0(z_2)+\frac{n-(j-1)}{n^2} z_1 z_2(\underline{s}_n^0(z_1)-\underline{s}_n^0(z_2))} \\
	\nonumber& \quad +A_{5,n} \\
	\nonumber =&\frac{1}{n}\sum_{j=1}^{n} \frac{a_n(z_1, z_2)}{1-\frac{j-1}{n} a_n(z_1, z_2)}+A_{5, n},
\end{align}
where $\mathbb{E}\left|A_{5,n} \right| \leq Kn^{-1/2}$, and
\begin{align}
	a_n(z_1,z_2)=y_n\underline{s}_n^0(z_1)\underline{s}_n^0(z_2)\int\frac{t^2dH_p(t)}{(1+t\underline{s}_n^0(z_1))(1+t\underline{s}_n^0(z_2))}.
\end{align}
     	We have, due to $\sup_{z_1\in\mathcal{C}_2,z_2\in\mathcal{C}_1}\lvert a_n(z_1,z_2)\rvert<1$, then
     \begin{align}\label{a_n(z_1,z_2)-integral}
     	&\sup_{z_1\in\mathcal{C}_2,z_2\in\mathcal{C}_1}\left\lvert a_n(z_1,z_2)\frac{1}{n}\sum_{j=1}^{n}\frac{1}{1-\frac{j-1}{n}a_n(z_1,z_2)}-a_n(z_1,z_2)\int_{0}^{1}\frac{1}{1-ta_n(z_1,z_2)}dt\right\rvert \leq Kn^{-1}. 
     \end{align}
   Therefore,  we get the limit of \eqref{EtrTE_jD_jTE_jD_j}
     \begin{align}\label{resultproduct}
     	& \left| \frac{1}{n^2}\sum_{j=1}^{n}z_1z_2\underline{s}_n^0(z_1)\underline{s}_n^0(z_2)\operatorname{tr} \bbT\mathbb{E}_j(\bbD_j^{-1}(z_1))\bbT\mathbb{E}_j(\bbD_j^{-1}(z_2)) \right.\\
     	\nonumber &\quad\quad\quad\quad\quad\quad \left. -a_n(z_1,z_2)\int_{0}^{1}\frac{1}{1-ta_n(z_1,z_2)}dt \right| \\
     	\nonumber &\leq Kn^{-1/2}.
     \end{align}

     From \eqref{resulthadmard}, \eqref{resultproduct} and Lemma \ref{|Es_n-s_n^0|}, we get the limit of \eqref{Ejejz1Ejejz2}, which is under Assumption \ref{assumRG},
     \begin{align}\label{RGinvar}
     	\bE & \left|\eqref{Ejejz1Ejejz2}-\beta_x n^{-1}\underline{s}_n^0(z_1)\underline{s}_n^0(z_2) \sum_{i=1}^{p}\bbe_i^* \bbT(\underline{s}_n^0(z_1)\bbT+\bbI)^{-1}\bbe_i \bbe_i^*\bbT(\underline{s}_n^0(z_2)\bbT+\bbI)^{-1}\bbe_i \right.\\
     	\nonumber &~~ \left. -2a_n(z_1,z_2)\int_{0}^{1}\frac{1}{1-ta_n(z_1,z_2)}dt\right|=\bigo(n^{-1/2}),
     \end{align}
     under Assumption \ref{assumCG},
     \begin{align}\label{CGinvar}
     \bE & \left|\eqref{Ejejz1Ejejz2}-\beta_x n^{-1}\underline{s}_n^0(z_1)\underline{s}_n^0(z_2) \sum_{i=1}^{p}\bbe_i^*\bbT(\underline{s}_n^0(z_1)\bbT+\bbI)^{-1}\bbe_i\bbe_i^*\bbT(\underline{s}_n^0(z_2)\bbT+\bbI)^{-1}\bbe_i \right.	\\
     \nonumber&~~ \left. -a_n(z_1,z_2)\int_{0}^{1}\frac{1}{1-ta_n(z_1,z_2)}dt\right|=\bigo(n^{-1/2}).
     \end{align}
	As $n\to \infty$, $y_n\to y$, $\underline{s}_n^0(z)\to \underline{s}^0(z)$, $f_m^{\prime}(z)\to f^{\prime}(z)$, $g_m^{\prime}(z)\to g^{\prime}(z)$, $H_p\to H$,
	$$p^{-1} \sum_{i=1}^{p}\bbe_i^*\bbT(\underline{s}_n^0(z_1)\bbT+\bbI)^{-1}\bbe_i\bbe_i^*\bbT(\underline{s}_n^0(z_2)\bbT+\bbI)^{-1}\bbe_i\rightarrow h_1(z_1,z_2),$$
	 we have under Assumption \ref{assumRG}, 
	\begin{align*}
		&-\frac{1}{4\pi^2}\oint_{\mathcal{C}_{1}}\oint_{\mathcal{C}_{2}}f_m^{\prime}(z_1)g_m^{\prime}(z_2)\sum_{j=1}^{n}\mathbb{E}_{j-1}[  \mathbb{E}_j\varepsilon_j(z_1)b_j(z_1)  \mathbb{E}_j\varepsilon_j(z_2)b_j(z_2)]dz_1dz_2\\
	& \rightarrow -\frac{y\beta_x}{4\pi^2}\oint_{\mathcal{C}_{1}}\oint_{\mathcal{C}_{2}}f^{\prime}(z_1)g^{\prime}(z_2) \underline{s}^0(z_1)\underline{s}^0(z_2)h_1(z_1,z_2)dz_2dz_1 \\
	&\quad\quad\quad -\frac{1}{2\pi^2}\oint_{\mathcal{C}_{1}}\oint_{\mathcal{C}_{2}}f^{\prime}(z_1)g^{\prime}(z_2) a(z_1,z_2)\int_{0}^{1}\frac{1}{1-ta(z_1,z_2)}dt dz_2dz_1;
	\end{align*}
	under Assumption \ref{assumCG},
	\begin{align*}
		&-\frac{1}{4\pi^2}\oint_{\mathcal{C}_{1}}\oint_{\mathcal{C}_{2}}f_m^{\prime}(z_1)g_m^{\prime}(z_2)\sum_{j=1}^{n}\mathbb{E}_{j-1}[  \mathbb{E}_j\varepsilon_j(z_1)b_j(z_1)  \mathbb{E}_j\varepsilon_j(z_2)b_j(z_2)]dz_1dz_2\\
	& \rightarrow -\frac{y\beta_x}{4\pi^2}\oint_{\mathcal{C}_{1}}\oint_{\mathcal{C}_{2}}f^{\prime}(z_1)g^{\prime}(z_2)  \underline{s}^0(z_1)\underline{s}^0(z_2)h_1(z_1,z_2)dz_2dz_1 \\
	&\quad\quad\quad -\frac{1}{4\pi^2}\oint_{\mathcal{C}_{1}}\oint_{\mathcal{C}_{2}}f^{\prime}(z_1)g^{\prime}(z_2) a(z_1,z_2)\int_{0}^{1}\frac{1}{1-ta(z_1,z_2)}dt dz_2dz_1.
	\end{align*}
	Thus, we can get the covariance term of Theorem \ref{FunctionCLT}.                 
	
\end{proof}

\subsection{Nonrandom part}
In this section, we will find the limit of 
\begin{align*}
	 p\int f_m(x)d[\mathbb{E}F^{B_n}(x)-F^{y_n,H_p}(x)]=-\frac{1}{2\pi i}\oint_{\mathcal{C}}f_m(z)M_n^2(z)dz.
\end{align*}
To this end, we shall first consider $M_n^2(z)=p\left[ \bE s_n(z)-s_n^0(z) \right]=n\left[\bE \underline{s}_n(z)-\underline{s}_n^0(z)\right]$.

Let $$A_n(z)=y_n\int \frac{dH_p(t)}{1+t\mathbb{E}\underline{s}_n(z)}+z y_n\mathbb{E}s_n(z).$$ Using the identity
\begin{align*}
	\mathbb{E}\underline{s}_n(z)=-\frac{1-y_n}{z}+y_n\mathbb{E}s_n
\end{align*}
we have 
\begin{align*}
	A_n(z) &= y_n\int \frac{dH_p(t)}{1+t\mathbb{E}\underline{s}_n(z)}-y_n+z\mathbb{E}\underline{s}_n(z)+1 \\
	\nonumber &=-\mathbb{E}\underline{s}_n(z)\left(-z-\frac{1}{\mathbb{E}\underline{s}_n(z)}+y_n\int \frac{tdH_p(t)}{1+t\mathbb{E}\underline{s}_n(z)}\right).
\end{align*}
It follows that
\begin{align}\label{Es_n(z)}
	\mathbb{E}\underline{s}_n(z)=\left[ -z+y_n\int \frac{tdH_p(t)}{1+t\mathbb{E}\underline{s}_n(z)}+A_n/\mathbb{E}\underline{s}_n(z)\right]^{-1}.
\end{align}

From \eqref{equation of s^0}, we have equation
\begin{align}\label{equation of s_n^0}
	\underline{s}_n^0=-\left(z-y_n\int\frac{t}{1+t\underline{s}_n^0}dH_p(t)\right)^{-1},
\end{align}
where $\underline{s}_n^0:=s_{\underline{F}^{y_n,H_p}}$ and $\underline{F}^{y_n,H_p}=(1-y_n)\delta_0+y_nF^{y_n,H_p}$.
From \eqref{equation of s_n^0} and \eqref{Es_n(z)}, we write
\begin{align}\label{M_n^2}
	& M_n^2(z)= n\left[\frac{1}{-z+y_n\int \frac{tdH_p(t)}{1+t\mathbb{E}\underline{s}_n(z)}+A_n/\mathbb{E}\underline{s}_n(z)}-\frac{1}{-z+y_n\int \frac{tdH_p(t)}{1+t\underline{s}_n^0(z)}}\right] \\
	\nonumber=& -n\underline{s}_n^0 A_n\left[1-y_n\mathbb{E}\underline{s}_n \underline{s}_n^0\int\frac{t^2dH_p(t)}{(1+t\mathbb{E}\underline{s}_n)(1+t\underline{s}_n^0)}\right]^{-1}.
\end{align}
Due to (4.4) in \cite{BaiS04C}, we can get the existence of $\xi\in(0,1)$ such that for all $n$ large 
\begin{align*}
	\sup_{z\in\mathcal{C}}\bigg\lvert y_n\mathbb{E}\underline{s}_n \underline{s}_n^0\int\frac{t^2dH_p(t)}{(1+t\mathbb{E}\underline{s}_n)(1+t\underline{s}_n^0)}\bigg\rvert<\xi.
\end{align*}
Thus the denominator of \eqref{M_n^2} is bounded away from zero.
Our next task is to investigate the limiting behavior of
\begin{align*}
	 nA_n&=n\left(y_n\int \frac{dH_p(t)}{1+t\mathbb{E}\underline{s}_n(z)}+zy_n\mathbb{E}s_n(z)\right) \\
	&=n\mathbb{E}\beta_1\left[\bbr_1^{*}\bbD_1^{-1}(\mathbb{E}\underline{s}_n\bbT+\bbI)^{-1}\bbr_1-n^{-1}\mathbb{E}\operatorname{tr}(\mathbb{E}\underline{s}_n\bbT+\bbI)^{-1}\bbT\bbD^{-1}\right],
\end{align*}
for $z\in\mathcal{C}$ [see (5.2) in \cite{BaiS98N}]. Throughout the following, all bounds, including $\bigo(\cdot)$ and $o(\cdot)$ expressions hold uniformly for $z\in\mathcal{C}$. The positive constant $K$ is independent of $z$.  
According to \eqref{D^{-1}-D_j^{-1}}, \eqref{beta_1}, Lemmas \ref{a(v)_quadratic_minus_trace} and \ref{E|trTDj-1-EtrTDj-1|}, we have 
\begin{align}
	& \lvert\mathbb{E}\operatorname{tr}(\mathbb{E}\underline{s}_n\bbT+\bbI)^{-1}\bbT\bbD_1^{-1}-\mathbb{E}\operatorname{tr}(\mathbb{E}\underline{s}_n\bbT+\bbI)^{-1}\bbT\bbD^{-1} \\
	\nonumber&\quad\quad\quad\quad\quad\quad\quad\quad\quad\quad\quad\quad\quad\quad\quad-b_n\mathbb{E}\bbr_1^{*}\bbD_1^{-1}(\mathbb{E}\underline{s}_n\bbT+\bbI)^{-1}\bbT\bbD_1^{-1}\bbr_1\rvert \\
	\nonumber =&\lvert\mathbb{E}\beta_1 \operatorname{tr}(\mathbb{E}\underline{s}_n\bbT+\bbI)^{-1}\bbT\bbD_1^{-1}\bbr_1\bbr_1^{*}\bbD_1^{-1}-b_n\mathbb{E}\bbr_1^{*}\bbD_1^{-1}(\mathbb{E}\underline{s}_n\bbT+\bbI)^{-1}\bbT\bbD_1^{-1}\bbr_1\rvert \\
	\nonumber =&\lvert b_n\mathbb{E}(1-\beta_1\gamma_1)\bbr_1^{*}\bbD_1^{-1}(\mathbb{E}\underline{s}_n\bbT+\bbI)^{-1}\bbT\bbD_1^{-1}\bbr_1 \\
	\nonumber &\quad\quad\quad\quad\quad\quad\quad\quad\quad\quad\quad\quad\quad\quad\quad-b_n\mathbb{E}\bbr_1^{*}\bbD_1^{-1}(\mathbb{E}\underline{s}_n\bbT+\bbI)^{-1}\bbT\bbD_1^{-1}\bbr_1\rvert \\
	\nonumber =&\lvert b_n\mathbb{E}\beta_1\gamma_1\bbr_1^{*}\bbD_1^{-1}(\mathbb{E}\underline{s}_n\bbT+\bbI)^{-1}\bbT\bbD_1^{-1}\bbr_1\rvert \leq Kn^{-1}. 
\end{align}
Since \eqref{beta_1}, we have
\begin{align*}
	& n\mathbb{E}\beta_1\bbr_1^{*}\bbD_1^{-1}(\mathbb{E}\underline{s}_n\bbT+\bbI)^{-1}\bbr_1-\mathbb{E}\beta_1\mathbb{E}\operatorname{tr}(\mathbb{E}\underline{s}_n\bbT+\bbI)^{-1}\bbT\bbD_1^{-1} \\
	=&-b_n^2 n\mathbb{E}\gamma_1\bbr_1^{*}\bbD_1^{-1}(\mathbb{E}\underline{s}_n\bbT+\bbI)^{-1}\bbr_1  +b_n^2\left[n\mathbb{E}\beta_1\gamma_1^2\bbr_1^{*}\bbD_1^{-1}(\mathbb{E}\underline{s}_n\bbT+\bbI)^{-1}\bbr_1 \right. \\
	& \quad\quad\quad\quad \left.-(\mathbb{E}\beta_1\gamma_1^2)\mathbb{E}\operatorname{tr}(\mathbb{E}\underline{s}_n\bbT+\bbI)^{-1}\bbT\bbD_1^{-1}\right] \\
	=&-b_n^2n\mathbb{E}\gamma_1\bbr_1^{*}\bbD_1^{-1}(\mathbb{E}\underline{s}_n\bbT+\bbI)^{-1}\bbr_1 +b_n^2 \operatorname{Cov}(\beta_1\gamma_1^2,\operatorname{tr}\bbD_1^{-1}(\mathbb{E}\underline{s}_n\bbT+\bbI)^{-1}\bbT) \\
	& +b_n^2\mathbb{E}\left[n\beta_1\gamma_1^2\bbr_1^{*}\bbD_1^{-1}(\mathbb{E}\underline{s}_n\bbT+\bbI)^{-1}\bbr_1-\beta_1\gamma_1^2\operatorname{tr}\bbD_1^{-1}(\mathbb{E}\underline{s}_n\bbT+\bbI)^{-1}\bbT\right].
\end{align*}
Here $\operatorname{Cov}(X,Y)=\mathbb{E}XY-\mathbb{E}X\mathbb{E}Y$.
Using (4.3) in \cite{BaiS04C},  \eqref{beta_1}, Lemmas \ref{bound moment of b_j and beta_j}, \ref{a(v)_quadratic_minus_trace}, \ref{lem3Qua} and \ref{E|trTDj-1-EtrTDj-1|},  we have 
\begin{align*}
	& \left|\mathbb{E}\left[n\beta_1\gamma_1^2\bbr_1^{*}\bbD_1^{-1}(\mathbb{E}\underline{s}_n\bbT+\bbI)^{-1}\bbr_1-\beta_1\gamma_1^2\operatorname{tr}\bbD_1^{-1}(\mathbb{E}\underline{s}_n\bbT+\bbI)^{-1}\bbT\right]\right| \\
	=&\left| nb_n\mathbb{E}\left[\gamma_1^2(\bbr_1^{*}\bbD_1^{-1}(\mathbb{E}\underline{s}_n\bbT+\bbI)^{-1}\bbr_1-n^{-1}\operatorname{tr}\bbD_1^{-1}(\mathbb{E}\underline{s}_n\bbT+\bbI)^{-1}\bbT)\right] \right. \\
	&\quad- nb_n^2\mathbb{E}\left[\gamma_1^3(\bbr_1^{*}\bbD_1^{-1}(\mathbb{E}\underline{s}_n\bbT+\bbI)^{-1}\bbr_1-n^{-1}\operatorname{tr}\bbD_1^{-1}(\mathbb{E}\underline{s}_n\bbT+\bbI)^{-1}\bbT)\right] \\
	&\quad \left. +b_n^2\mathbb{E}\left[\beta_1\gamma_1^4(\bbr_1^{*}\bbD_1^{-1}(\mathbb{E}\underline{s}_n\bbT+\bbI)^{-1}\bbr_1-n^{-1}\operatorname{tr}\bbD_1^{-1}(\mathbb{E}\underline{s}_n\bbT+\bbI)^{-1}\bbT)\right]\right| \\
	\leq& Kn\left| \mathbb{E}(\bbr_1^{*}\bbD_1^{-1}\bbr_1-n^{-1}\operatorname{tr}\bbT\bbD_1^{-1})^2(\bbr_1^{*}\bbD_1^{-1}(\mathbb{E}\underline{s}_n\bbT+\bbI)^{-1}\bbr_1-n^{-1}\operatorname{tr}\bbD_1^{-1}(\mathbb{E}\underline{s}_n\bbT+\bbI)^{-1}\bbT)\right| \\
	& +Kn\sqrt{\mathbb{E}\lvert\gamma_1\rvert^6}\sqrt{\mathbb{E}\lvert \bbr_1^{*}\bbD_1^{-1}(\mathbb{E}\underline{s}_n\bbT+\bbI)^{-1}\bbr_1-n^{-1}\operatorname{tr}\bbD_1^{-1}(\mathbb{E}\underline{s}_n\bbT+\bbI)^{-1}\bbT\rvert^2}  \\
	& +Kn\sqrt{\mathbb{E}\lvert \gamma_1\rvert^8}\sqrt{\mathbb{E}\lvert \beta_1(\bbr_1^{*}\bbD_1^{-1}(\mathbb{E}\underline{s}_n\bbT+\bbI)^{-1}\bbr_1-n^{-1}\operatorname{tr}\bbD_1^{-1}(\mathbb{E}\underline{s}_n\bbT+\bbI)^{-1}\bbT)\rvert^2}  \\
	\leq& Kn^{-1}+Kn^{-1}+Kn^{-3/2} \leq Kn^{-1}.
\end{align*} 
Using (4.3) in \cite{BaiS04C}, Lemmas \ref{bound moment of b_j and beta_j}, \ref{a(v)_quadratic_minus_trace} and \ref{E|trTDj-1-EtrTDj-1|}, we have
\begin{align*}
	&  \lvert\operatorname{Cov}(\beta_1\gamma_1^2,\operatorname{tr}\bbD_1^{-1}(\mathbb{E}\underline{s}_n\bbT+\bbI)^{-1}\bbT)\rvert \\
	\leq& (\mathbb{E}\lvert\beta_1\rvert^4)^{1/4}(\mathbb{E}\lvert\gamma_1\rvert^8)^{1/4} (\mathbb{E}\lvert \operatorname{tr}\bbD_1^{-1}(\mathbb{E}\underline{s}_n\bbT+\bbI)^{-1}\bbT-\mathbb{E}\operatorname{tr}\bbD_1^{-1}(\mathbb{E}\underline{s}_n\bbT+\bbI)^{-1}\bbT\rvert^2)^{1/2} \\
	\leq& Kn^{-1}.
\end{align*}
Write
\begin{align*}
	&  n\mathbb{E}\gamma_1\bbr_1^{*}\bbD_1^{-1}(\mathbb{E}\underline{s}_n\bbT+\bbI)^{-1}\bbr_1 \\
	=& n\mathbb{E}[(\bbr_1^{*}\bbD_1^{-1}(\mathbb{E}\underline{s}_n\bbT+\bbI)^{-1}\bbr_1-n^{-1}\operatorname{tr}\bbD_1^{-1}(\mathbb{E}\underline{s}_n\bbT+\bbI)^{-1}\bbT)  (\bbr_1^{*}\bbD_1^{-1}\bbr_1-n^{-1}\operatorname{tr}\bbD_1^{-1}\bbT)] \\
	&\quad +n^{-1}\operatorname{Cov}(\operatorname{tr}\bbD_1^{-1}\bbT,\operatorname{tr}\bbD_1^{-1}(\mathbb{E}\underline{s}_n\bbT+\bbI)^{-1}\bbT).
\end{align*}
From Lemma \ref{E|trTDj-1-EtrTDj-1|} we see the second term above is $\bigo(n^{-1})$. Since \eqref{|b_j-Ebeta_j|}, we get $\mathbb{E}\beta_1=b_n+\bigo(n^{-1})$.
Therefore, we arrive at
\begin{align}
	& nA_n=n\left(y_n\int \frac{dH_p(t)}{1+t\mathbb{E}\underline{s}_n(z)}+zy_n\mathbb{E}s_n(z)\right) \\
	\nonumber=& b_n^2 n^{-1}\mathbb{E}\operatorname{tr}\bbD_1^{-1}(\mathbb{E}\underline{s}_n\bbT+\bbI)^{-1}\bbT\bbD_1^{-1}\bbT \\
	\nonumber&\quad -b_n^2n\mathbb{E}[(\bbr_1^{*}\bbD_1^{-1}(\mathbb{E}\underline{s}_n\bbT+\bbI)^{-1}\bbr_1-n^{-1}\operatorname{tr}\bbD_1^{-1}(\mathbb{E}\underline{s}_n\bbT+\bbI)^{-1}\bbT)  \\
	\nonumber &\quad\quad\quad\quad\quad \times (\bbr_1^{*}\bbD_1^{-1}\bbr_1-n^{-1}\operatorname{tr}\bbD_1^{-1}\bbT)]+\bigo(n^{-1}).
\end{align}
Due to \eqref{b_j(z)+zs_n^0(z)} and Lemma \ref{product of two quadratic minus trace of two matrices}, we see that under Assumption \ref{assumCG}
\begin{align}\label{nA_n in CG case}
	nA_n=-(z\underline{s}_n^0(z))^2n^{-1}\beta_x\bE\left[\operatorname{tr}(\bbD_1^{-1}(\bE\underline{s}_n(z)\bbT+\bbI)^{-1}\bbT)\circ(\bbD_1^{-1}\bbT)\right]+\bigo(n^{-1}),
\end{align} 
while under Assumption \ref{assumRG},
\begin{align}
	nA_n= & (z\underline{s}_n^0(z))^2 n^{-1}\mathbb{E}\operatorname{tr}\bbD_1^{-1}(\mathbb{E}\underline{s}_n\bbT+\bbI)^{-1}\bbT\bbD_1^{-1}\bbT \\
	\nonumber &-(z\underline{s}_n^0(z))^2n^{-1}\beta_x\bE\left[\operatorname{tr}(\bbD_1^{-1}(\bE\underline{s}_n(z)\bbT+\bbI)^{-1}\bbT)\circ(\bbD_1^{-1}\bbT)\right]+\bigo(n^{-1}).
\end{align}
We only need to estimate the limits of 
\begin{align}\label{meanlimit1}
	n^{-1}\bE\left[\operatorname{tr}(\bbD_1^{-1}(\bE\underline{s}_n(z)\bbT+\bbI)^{-1}\bbT)\circ(\bbD_1^{-1}\bbT)\right] 
\end{align}
and 
\begin{align}\label{meanlimit2}
	n^{-1}\mathbb{E}\left[\operatorname{tr}\bbD_1^{-1}(\mathbb{E}\underline{s}_n\bbT+\bbI)^{-1}\bbT\bbD_1^{-1}\bbT\right]. 
\end{align}

As for the limit of \eqref{meanlimit1}, due to Lemmas \ref{Burkholder_1}, \ref{bound moment of b_j and beta_j}, \ref{bound spectral norm of zI-bj(z)T}, \ref{QMTr_1}, \eqref{D_j^{-1}-D_kj^{-1}},
\begin{align*}
	&\left|n^{-1}\bE\left[\operatorname{tr}(\bbD_1^{-1}(\bE\underline{s}_n(z)\bbT+\bbI)^{-1}\bbT)\circ((\bbD_1^{-1}-\bE\bbD_1^{-1})\bbT)\right]\right| \\
	\leq & n^{-1}\sum_{i=1}^{p}\left|\bE\left[\bbe_i^*(\bbD_1^{-1}-\bE\bbD_1^{-1})(\bE\underline{s}_n(z)\bbT+\bbI)^{-1}\bbT \bbe_i \bbe_i^*(\bbD_1^{-1}-\bE\bbD_1^{-1})\bbT\bbe_i \right]\right| \\
	\leq & n^{-1}\sum_{i=1}^{p}\left( \sum_{s=1}^{n}\bE|\bbr_s^*\bbD_{1s}^{-1}(\bE\underline{s}_n(z)\bbT+\bbI)^{-1}\bbT\bbe_i\bbe_i^*\bbD_{1s}^{-1}\bbr_s \right. \\
	& \left. \quad\quad\quad\quad\quad\quad\quad -n^{-1}\mtr\bbT\bbD_{1s}^{-1}(\bE\underline{s}_n(z)\bbT+\bbI)^{-1}\bbT\bbe_i\bbe_i^*\bbD_{1s}^{-1}|^2 \right)^{1/2} \\
	&\quad\quad\quad\quad \left(\sum_{s=1}^{n}\bE|\bbr_s^*\bbD_{1s}^{-1}\bbT\bbe_i\bbe_i^*\bbD_{1s}^{-1}\bbr_s-n^{-1}\mtr \bbT\bbD_{1s}^{-1}\bbT\bbe_i\bbe_i^*\bbD_{1s}^{-1}|^2\right)^{1/2} \\
	=&\bigo(n^{-1}).
\end{align*}
Thus, 
\begin{align*}
	\left|\eqref{meanlimit1}-n^{-1} \left[\mtr (\bE \bbD_1^{-1}(\bE\underline{s}_n(z)\bbT+\bbI)^{-1}\bbT)\circ (\bE\bbD_1^{-1}\bbT)\right]\right|=\bigo(n^{-1}).
\end{align*}
Further, due to $\eqref{eiTED-Tnei}=\bigo(n^{-1})$ and Lemma \ref{|Es_n-s_n^0|}, we get the limit of \eqref{meanlimit1},
\begin{align*}
	\eqref{meanlimit1}=n^{-1} z^{-2} \mtr ((\underline{s}_n^0(z)\bbT+\bbI)^{-2}\bbT)\circ ((\underline{s}_n^0(z)\bbT+\bbI)^{-1}\bbT)+\bigo(n^{-1}).
\end{align*}

Next, we find the limit of \eqref{meanlimit2}. Applications of \eqref{D^{-1}-D_j^{-1}}, (4.3) in \cite{BaiS04C}, Lemmas \ref{bound moment of b_j and beta_j}, \ref{a(v)_quadratic_minus_trace} and \ref{E|trTDj-1-EtrTDj-1|} show that both
\begin{align*}
	\mathbb{E}\operatorname{tr}\bbD_1^{-1}(\mathbb{E}\underline{s}_n\bbT+\bbI)^{-1}\bbT\bbD_1^{-1}\bbT-\mathbb{E}\operatorname{tr}\bbD^{-1}(\mathbb{E}\underline{s}_n\bbT+\bbI)^{-1}\bbT\bbD_1^{-1}\bbT
\end{align*}
and
\begin{align*}
	\mathbb{E}\operatorname{tr}\bbD^{-1}(\mathbb{E}\underline{s}_n\bbT+\bbI)^{-1}\bbT\bbD_1^{-1}\bbT-\mathbb{E}\operatorname{tr}\bbD^{-1}(\mathbb{E}\underline{s}_n\bbT+\bbI)^{-1}\bbT\bbD^{-1}\bbT
\end{align*}
are bounded. Therefore it is sufficient to consider 
\begin{align*}
	n^{-1}\mathbb{E}\operatorname{tr}\bbD^{-1}(\mathbb{E}\underline{s}_n\bbT+\bbI)^{-1}\bbT\bbD^{-1}\bbT.
\end{align*}
Write
\begin{align*}
	\bbD(z)+z\bbI-b_n(z)\bbT=\sum_{j=1}^{n}\bbr_j\bbr_j^{*}-b_n(z)\bbT.
\end{align*}
It is straightforward to verify that $z\bbI-b_n(z)\bbT$ is non-singular. Multiplying by $(z\bbI-b_n(z)\bbT)^{-1}$ on the left-hand side, $\bbD_j^{-1}(z)$ on the right-hand side and using \eqref{r_i^*(C+r_ir_i^*)^-1}, we get
\begin{align}\label{D^-1(z)}
	\bbD^{-1}(z)= & -(z \bbI-b_n(z) \bbT)^{-1} +\sum_{j=1}^n \beta_j(z)(z \bbI-b_n(z) \bbT)^{-1} \bbr_j \bbr_j^* \bbD_j^{-1}(z) \\
	\nonumber& -b_n(z)(z \bbI-b_n(z) \bbT)^{-1} \bbT \bbD^{-1}(z) \\
	\nonumber= & -(z \bbI-b_n(z) \bbT)^{-1}+b_n(z) \bbA(z)+\bbB(z)+\bbC(z)
\end{align}
where
\begin{align*}
	& \bbA(z)=\sum_{j=1}^n(z \bbI-b_n(z) \bbT)^{-1}(\bbr_j \bbr_j^*-n^{-1} \bbT) \bbD_j^{-1}(z), \\
	& \bbB(z)=\sum_{j=1}^n(\beta_j(z)-b_n(z))(z \bbI-b_n(z) \bbT)^{-1} \bbr_j \bbr_j^* \bbD_j^{-1}(z)
\end{align*}

and
\begin{align*}
	\bbC(z) & =n^{-1} b_n(z)(z \bbI-b_n(z) \bbT)^{-1} \bbT \sum_{j=1}^n(\bbD_j^{-1}(z)-\bbD^{-1}(z)) \\
	& =n^{-1} b_n(z)(z \bbI-b_n(z) \bbT)^{-1} \bbT \sum_{j=1}^n \beta_j(z) \bbD_j^{-1}(z) \bbr_j \bbr_j^* \bbD_j^{-1}(z) .
\end{align*}
From Lemma \ref{bound spectral norm of zI-bj(z)T} , it follows that $\lVert (z\bbI-b_n(z)\bbT)^{-1}\rVert$ is bounded. We have by \eqref{beta_1}, Lemmas \ref{a(v)_quadratic_minus_trace} and \ref{E|trTDj-1-EtrTDj-1|}
\begin{align}\label{E|b_n-beta_1|}
	\mathbb{E}\lvert\beta_1-b_n\rvert^2=\lvert b_n\rvert^2\mathbb{E}\lvert\beta_1\gamma_1\rvert^2\leq Kn^{-1}.
\end{align}
Let $\bbM$ be $n\times n$ matrix. From Lemmas 
\ref{bound moment of b_j and beta_j},  \ref{a(v)_quadratic_minus_trace}, \eqref{|b_j-Ebeta_j|} and \eqref{E|b_n-beta_1|} we get
\begin{align}\label{tr B(z)M}
	& \lvert n^{-1} \mathbb{E}\operatorname{tr} \bbB(z) \bbM\rvert \\
	\nonumber&=\lvert n^{-1}\sum_{j=1}^{n}\mathbb{E}[(\beta_j-b_n) \\
	\nonumber&\quad\quad \times(\bbr_j^*\bbD_j^{-1}\bbM(z\bbI-b_n(z)\bbT)^{-1}\bbr_j-n^{-1}\operatorname{tr}\bbT\bbD_j^{-1}\bbM(z\bbI-b_n(z)\bbT)^{-1}) \\
	\nonumber &\quad\quad\quad\quad\quad\quad+(\beta_j-b_n)n^{-1}\operatorname{tr}\bbT\bbD_j^{-1}\bbM(z\bbI-b_n(z)\bbT)^{-1}]\rvert \\
	\nonumber& \leq n^{-1}\sum_{j=1}^{n}\sqrt{\mathbb{E}\lvert \bbr_j^*\bbD_j^{-1}\bbM(z\bbI-b_n(z)\bbT)^{-1}\bbr_j-n^{-1}\operatorname{tr}\bbT\bbD_j^{-1}\bbM(z\bbI-b_n(z)\bbT)^{-1}\rvert^2} \\
	\nonumber&\quad\quad\quad\quad \times \sqrt{\mathbb{E}\lvert \beta_j-b_n\rvert^2}  \\
	\nonumber&\quad\quad+n^{-1}\sum_{j=1}^{n} \lvert \mathbb{E}(\beta_j-b_n)n^{-1}\operatorname{tr}\bbT\bbD_j^{-1}\bbM(z\bbI-b_n(z)\bbT)^{-1}\rvert \\
	\nonumber&\leq Kn^{-1}(\mathbb{E}\lVert \bbM\rVert^2)^{1/2}+Kn^{-1}\mathbb{E}\lVert \bbM\rVert \leq Kn^{-1}(\mathbb{E}\lVert \bbM\rVert^2)^{1/2},
\end{align}
and
\begin{align}\label{tr C(z)M}
	\lvert n^{-1} \mathbb{E} \operatorname{tr} \bbC(z) \bbM\rvert & \leq K \mathbb{E}\lvert\beta_1\rvert \bbr_1^* \bbr_1\lVert \bbD_1^{-1}\rVert^2\lVert \bbM\rVert \\
	\nonumber& \leq K n^{-1}(\mathbb{E}\lVert \bbM\rVert^2)^{1 / 2}.
\end{align}
For the following $\bbM, n \times n$ matrix, is nonrandom, bounded in norm. Write
\begin{align}
	\operatorname{tr} \bbA(z) \bbT \bbD^{-1} \bbM=A_1(z)+A_2(z)+A_3(z),
\end{align}
where
\begin{align*}
	& A_1(z)=\operatorname{tr} \sum_{j=1}^n(z \bbI-b_n \bbT)^{-1} \bbr_j \bbr_j^* \bbD_j^{-1} \bbT(\bbD^{-1}-\bbD_j^{-1}) \bbM \\
	& A_2(z)=\operatorname{tr} \sum_{j=1}^n(z \bbI-b_n \bbT)^{-1}(\bbr_j \bbr_j^* \bbD_j^{-1} \bbT \bbD_j^{-1}-n^{-1} \bbT \bbD_j^{-1} \bbT \bbD_j^{-1}) \bbM
\end{align*}
and
\begin{align*}
	A_3(z)=\operatorname{tr} \sum_{j=1}^n(z \bbI-b_n \bbT)^{-1} n^{-1} \bbT \bbD_j^{-1} \bbT(\bbD_j^{-1}-\bbD^{-1}) \bbM.
\end{align*}

We have $\mathbb{E}A_2(z)=0$ and similarly to \eqref{tr C(z)M} we have
\begin{align}
	\lvert \mathbb{E}n^{-1}A_3(z)\rvert\leq Kn^{-1}.
\end{align}
Using Lemma \ref{a(v)_quadratic_minus_trace}, \eqref{D^{-1}-D_j^{-1}} and \eqref{E|b_n-beta_1|} we get
\begin{align*}
	\mathbb{E} n^{-1} A_1(z) & =-\mathbb{E} \beta_1 \bbr_1^* \bbD_1^{-1} \bbT \bbD_1^{-1} \bbr_1 \bbr_1^* \bbD_1^{-1} \bbM(z \bbI-b_n \bbT)^{-1} \bbr_1 \\
	& =-b_n \mathbb{E}(n^{-1} \operatorname{tr} \bbD_1^{-1} \bbT \bbD_1^{-1} \bbT)(n^{-1} \operatorname{tr} \bbD_1^{-1} \bbM(z \bbI-b_n \bbT)^{-1} \bbT)+\bigo(n^{-1}) \\
	& =-b_n \mathbb{E}(n^{-1} \operatorname{tr} \bbD^{-1} \bbT \bbD^{-1} \bbT)(n^{-1} \operatorname{tr} \bbD^{-1} \bbM(z \bbI-b_n \bbT)^{-1} \bbT)+\bigo(n^{-1}) . 
\end{align*}
Using Lemmas \ref{bound moment of b_j and beta_j} and \ref{E|trTDj-1-EtrTDj-1|} we find
\begin{align*}
	& \lvert \operatorname{Cov}  (n^{-1} \operatorname{tr} \bbD^{-1} \bbT \bbD^{-1} \bbT, n^{-1} \operatorname{tr} \bbD^{-1} \bbM (z \bbI-b_n \bbT )^{-1} \bbT ) \rvert \\
	\leq &  (\mathbb{E}\lvert n^{-1} \operatorname{tr} \bbD^{-1} \bbT \bbD^{-1} \bbT\rvert^2 )^{1 / 2} n^{-1} \\
	\quad & \times(\mathbb{E}\lvert\operatorname{tr} \bbD^{-1} \bbM(z \bbI-b_n \bbT)^{-1} \bbT-\mathbb{E}\operatorname{tr} \bbD^{-1} \bbM(z \bbI-b_n \bbT)^{-1} \bbT\rvert^2)^{1 / 2} \\
	\leq & K n^{-1}.
\end{align*}
Therefore 
\begin{align}\label{n^-1A_1(z)}
	&\mathbb{E} n^{-1}  A_1(z) \\
	\nonumber= & -b_n(\mathbb{E} n^{-1} \operatorname{tr} \bbD^{-1} \bbT \bbD^{-1} \bbT) 	(\mathbb{E} n^{-1} \operatorname{tr} \bbD^{-1} \bbM(z \bbI-b_n \bbT)^{-1} \bbT)+\bigo(n^{-1}) .
\end{align}
Since $\mathbb{E}\beta_1=-z\mathbb{E}\underline{s}_n$, $\mathbb{E}\beta_1=b_n+\bigo(n^{-1})$ and Lemma \ref{|Es_n-s_n^0|}, we have $b_n=-z\underline{s}_n^0(z)+\bigo(n^{-1})$. From \eqref{D^-1(z)}, \eqref{tr B(z)M} and \eqref{tr C(z)M} we get
\begin{align}\label{En^-1trD^-1T(zI-b_nT)^-1T}
	& \mathbb{E} n^{-1}  \operatorname{tr} \bbD^{-1} \bbT(z \bbI-b_n \bbT)^{-1} \bbT \\
	\nonumber=& n^{-1} \operatorname{tr}[-(z \bbI-b_n \bbT)^{-1}+\mathbb{E} \bbB(z)+\mathbb{E} \bbC(z)] \bbT(z \bbI-b_n \bbT)^{-1} \bbT \\
	\nonumber=&  -\frac{y_n}{z^2} \int \frac{t^2 d H_p(t)}{(1+t \underline{s}_n^0)^2}+\bigo(n^{-1}) .
\end{align}
Similarly,
\begin{align}\label{En^-1trD^-1(Es_nT+I)^-1T(zI-b_nT)^-1T}
	& \mathbb{E} n^{-1}  \operatorname{tr} \bbD^{-1}(\underline{s}_n^0 \bbT+\bbI)^{-1} \bbT(z \bbI-b_n \bbT)^{-1} \bbT =-\frac{y_n}{z^2} \int \frac{t^2 d H_p(t)}{\left(1+t \underline{s}_n^0\right)^3}+\bigo(n^{-1}) .
\end{align}
Using \eqref{D^-1(z)} and \eqref{tr B(z)M}--\eqref{En^-1trD^-1T(zI-b_nT)^-1T}, we get
\begin{align*}
	&\mathbb{E} n^{-1} \operatorname{tr}  \bbD^{-1} \bbT \bbD^{-1} \bbT \\
	=&  -\mathbb{E} n^{-1} \operatorname{tr} \bbD^{-1} \bbT(z \bbI-b_n \bbT)^{-1} \bbT \\
	& \quad-b_n^2(\mathbb{E} n^{-1} \operatorname{tr} \bbD^{-1} \bbT \bbD^{-1} \bbT)(\mathbb{E} n^{-1} \operatorname{tr} \bbD^{-1} \bbT(z \bbI-b_n \bbT)^{-1} \bbT)+\bigo(n^{-1}) \\
	=&  \frac{y_n}{z^2} \int \frac{t^2 d H_p(t)}{(1+t \underline{s}_n^0)^2}\left(1+z^2 (\underline{s}_n^0)^2 \mathbb{E} n^{-1} \operatorname{tr} \bbD^{-1} \bbT \bbD^{-1} \bbT\right)+\bigo(n^{-1}).
\end{align*}
Therefore
\begin{align}\label{En^-1trD^-1TD^-1T}
	& \mathbb{E} n^{-1} \operatorname{tr} \bbD^{-1} \bbT \bbD^{-1} \bbT \\
	\nonumber& =\left[\frac{y_n}{z^2} \int \frac{t^2 d H_p(t)}{(1+t \underline{s}_n^0)^2}\right]\left[1-y_n \int \frac{(\underline{s}_n^0)^2 t^2 d H_p(t)}{(1+t \underline{s}_n^0)^2}\right]^{-1}+\bigo(n^{-1}).
\end{align}
Finally we have from \eqref{D^-1(z)}--\eqref{n^-1A_1(z)}, \eqref{En^-1trD^-1(Es_nT+I)^-1T(zI-b_nT)^-1T} and \eqref{En^-1trD^-1TD^-1T}
\begin{align*}
	& n^{-1} \mathbb{E} \operatorname{tr} \bbD^{-1}(\underline{s}_n^0 \bbT+\bbI)^{-1} \bbT \bbD^{-1} \bbT \\
	=&  -\mathbb{E} n^{-1}\operatorname{tr} \bbD^{-1}(\underline{s}_n^0 \bbT+\bbI)^{-1} \bbT(z \bbI-b_n \bbT)^{-1} \bbT \\
	&\quad -b_n^2(\mathbb{E} n^{-1} \operatorname{tr} \bbD^{-1} \bbT \bbD^{-1} \bbT) \\
	&\quad\quad \times(\mathbb{E} n^{-1} \operatorname{tr} \bbD^{-1}(\underline{s}_n^0 \bbT+\bbI)^{-1} \bbT(z \bbI-b_n \bbT)^{-1} \bbT)+O(n^{-1}) \\
	&= \frac{y_n}{z^2} \int \frac{t^2 d H_p(t)}{(1+t \underline{s}_n^0)^3} \\
	& \quad\times\left(1+z^2 (\underline{s}_n^0)^2\left[\frac{y_n}{z^2} \int \frac{t^2 d H_p(t)}{(1+t \underline{s}_n^0(z))^2}\right]\left[1-y_n \int \frac{(\underline{s}_n^0)^2 t^2 d H_p(t)}{(1+t \underline{s}_n^0)^2}\right]^{-1}\right)+\bigo(n^{-1}) \\
	=&  \left[\frac{y_n}{z^2} \int \frac{t^2 d H_p(t)}{(1+t \underline{s}_n^0)^3}\right]\left[1-y_n \int \frac{(\underline{s}_n^0)^2 t^2 d H_p(t)}{(1+t \underline{s}_n^0)^2}\right]^{-1}+\bigo(n^{-1}) . 
\end{align*}
Therefore, from \eqref{M_n^2} we conclude that under Assumption \ref{assumRG},
\begin{align}\label{sup M_n^2 in RG case}
	&\sup_{z\in\mathcal{C}}\left| M_n^2(z) -\beta_x \frac{(\underline{s}_n^0(z))^3 n^{-1}  \mtr ((\underline{s}_n^0(z)\bbT+\bbI)^{-2}\bbT)\circ ((\underline{s}_n^0(z)\bbT+\bbI)^{-1}\bbT)}{1-y_n\int \underline{s}_n^0(z)^2t^2(1+t\underline{s}_n^0(z))^{-2}dH_p(t))^2} \right.\\
	 \nonumber &\quad\quad\quad  \left. -\frac{y_n\int \underline{s}_n^0(z)^3t^2(1+t\underline{s}_n^0(z))^{-3}dH_p(t)}{(1-y_n\int \underline{s}_n^0(z)^2t^2(1+t\underline{s}_n^0(z))^{-2}dH_p(t))^2}\right| =\bigo(n^{-1});
\end{align}
under Assumption \ref{assumCG},
\begin{align}\label{sup M_n^2 in CG case}
	&\sup_{z\in\mathcal{C}}\left| M_n^2(z) - \beta_x \frac{ (\underline{s}_n^0(z))^3 n^{-1}  \mtr ((\underline{s}_n^0(z)\bbT+\bbI)^{-2}\bbT)\circ ((\underline{s}_n^0(z)\bbT+\bbI)^{-1}\bbT)}{1-y_n\int \underline{s}_n^0(z)^2t^2(1+t\underline{s}_n^0(z))^{-2}dH_p(t))^2}\right| \\
	\nonumber &=\bigo(n^{-1}).
\end{align}

Due to  $y_n\to y$, $\underline{s}_n^0(z)\to \underline{s}^0(z)$, $H_p\stackrel{d}{\rightarrow} H$, $f_m(z)\to f(z)$,
$$\frac{1}{p}\sum_{i=1}^{p}\bbe_i^*(\underline{s}^0(z)\bbT+\bbI)^{-2}\bbT\bbe_i\bbe_i^*(\underline{s}^0(z)\bbT+\bbI)^{-1}\bbT\bbe_i\to h_2(z),$$
 we have, under Assumption \ref{assumRG},
\begin{align*}
	&p\int f_m(x)d[\mathbb{E}F^{B_n}(x)-F^{y_n,H_p}(x)]=-\frac{1}{2\pi i}\oint_{\mathcal{C}}f_m(z)M_n^2(z)dz \\
	&\quad \to-\frac{1}{2\pi i}\oint_{\calC} f(z)  \frac{y\int \underline{s}^0(z)^3t^2(1+t\underline{s}^0(z))^{-3}d H(t)}{(1-y\int \underline{s}^0(z)^2t^2(1+t\underline{s}^0(z))^{-2} d H(t))^2}  dz \\
	& \quad\quad\quad  - \frac{\beta_x}{2\pi i}\oint_{\calC} f(z) \frac{y (\underline{s}^0(z))^3 h_2(z)}{1-y\int \underline{s}^0(z)^2t^2(1+t\underline{s}^0(z))^{-2} d H(t)} d z;
\end{align*}
and under Assumption \ref{assumCG},
\begin{align*}
	&p\int f_m(x)d[\mathbb{E}F^{B_n}(x)-F^{y_n,H_p}(x)]=-\frac{1}{2\pi i}\oint_{\mathcal{C}}f_m(z)M_n^2(z)dz \\
	&\quad \to -\frac{\beta_x}{2\pi i}\oint_{\calC} f(z) \frac{y \underline{s}^0(z)^3 h_2(z)}{1-y\int \underline{s}^0(z)^2t^2(1+t\underline{s}^0(z))^{-2} d H(t)} d z.
\end{align*}

\section{Proof of Theorem \ref{RateFunctionCLT}}\label{ProfRateCLT}
 The proof of Theorem \ref{RateFunctionCLT} consists of two main steps.
 First, we estimate the order of the Bernstein remainder term which are the terms $\Delta_2$ and $\Delta_3$ in \eqref{aboutdelta}; then, by applying Lemma \ref{Chen} and the results in \cite{CuiH25R}, we derive the conclusions of Theorem \ref{RateFunctionCLT}. 
 
  Taking $m=[n^{8/5-\kappa}]$. Based on Lemma \ref{convergence_rate_ESD},
 	\begin{align*}
 		\sup_{x}|F^{\bbB_n}(x)-F^{y_n,H_n}(x)|=\bigo_{a.s.}(n^{-2/5+\kappa}),
 	\end{align*}
 	we have 
 	\begin{align*}
 		&\bE |\Delta_2|=\bigo(n^{-1+2\kappa}), \\
 		&\bE |\Delta_3|=\bigo(n^{-13/5+3\kappa}).
 	\end{align*}
 	Recall the notation "$\stackrel{\epsilon_n}{\sim}$" in Remark \ref{ZsimX}. Further, we get
 	\begin{align*}
 		\int f(x)dG_n(x) \stackrel{n^{-1/2+\kappa}}{\sim} \int f_m(x)dG_n(x).
 	\end{align*}
  Notice that $f_m$ is an analytic function. By invoking the main result in \cite{CuiH25R}, under the assumptions of Theorem \ref{RateFunctionCLT} $(\romannumeral1)$ or $(\romannumeral2)$, we obtain 
  	\begin{align}\label{fmrate}
  		\mathbb{K}\left(\frac{\int f_m(x)dG_n(x)-\mu_n(f_m)}{\sqrt{\sigma_n^2(f_m)}},Z\right)\leq Kn^{-1/2+\kappa}.
  	\end{align}
  	One additional point to note regarding the above conclusions is that, due to the relaxation of the assumption that the fourth-order moment match Gaussian, the forms of the mean term $\mu_n(f_m)$ and variance term $\sigma_n^2(f_m)$ have changed. However, the primary reason this change does not affect the rates is evident from \eqref{RGinvar}, \eqref{CGinvar}, \eqref{sup M_n^2 in RG case}, \eqref{sup M_n^2 in CG case}, by using Lemma \ref{Chen}.
	
	According to \eqref{fm-f} and Assumption \ref{assumtestf},
	\begin{align*}
		\left| f_m(x)-f(x) \right|\leq \frac{K}{m}=\bigo(n^{-8/5+\kappa}).
	\end{align*}
	Thus, when $\Xi_n^c$ defined in \eqref{Xi_n} occurs,
	\begin{align*}
		& \int f(x) dG_n(x) \stackrel{n^{-1/2}}{\sim} \int f_m(x)dG_n(x), \\
		& \left|\mu_n(f_m)-\mu_n(f)\right|=o(n^{-1}), \\
		& \left|\sigma_n^2(f_m)-\sigma_n^2(f)\right|=o(n^{-2}).
	\end{align*}
	Thanks to Lemma \ref{Chen}, combing \eqref{fmrate}, we have under the assumptions of Theorem \ref{RateFunctionCLT} $(\romannumeral1)$ or $(\romannumeral2)$,
	\begin{align*}
		\mathbb{K}\left(\frac{\int f(x)dG_n(x)-\mu_n(f)}{\sqrt{\sigma_n^2(f)}},Z\right)\leq Kn^{-1/2+\kappa}.	
	\end{align*}
	The proof of Theorem \ref{RateFunctionCLT} is complete.

 \section{Useful lemmas}\label{uselema}
\begin{lemma}[Theorem 35.12 of \cite{Billingsley95}]\label{yangclt}
 Suppose that  $Y_{1}, Y_{2}, \dots, Y_{r_n}$ is a real martingale difference sequence with respect to the increasing $\sigma$-field $\left\{\mathcal{F}_{j}\right\}$ having second moments. If as $n \rightarrow \infty$,
\begin{itemize}
\item[(i)]
$ \sum_{j=1}^{r_n} \mathrm{E}\left(Y_{j}^2 \mid \mathcal{F}_{j-1}\right) \xrightarrow{p} \sigma^2,$
 where $\sigma^2$ is a positive constant, 
 \item[(ii)] and for each $\varepsilon>0$,
$ \sum_{j=1}^{r_n} \mathrm{E}\left(Y_{j}^2 I_{\left(\left|Y_{j}\right| \geq \varepsilon\right)}\right) \rightarrow 0$,
\end{itemize}
 then
 $$
 \sum_{j=1}^{r_n} Y_{j} \xrightarrow{d} N\left(0, \sigma^2\right) ,
 $$
 where "$\xrightarrow{p}$" represents convergence in probability.
\end{lemma}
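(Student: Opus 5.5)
This is the martingale CLT (Billingsley's Theorem 35.12), so I would prove it by the characteristic-function method, taking the product-martingale route of McLeish rather than repeating Billingsley. Write $S_n=\sum_{j=1}^{r_n}Y_j$. The goal is to show $\mathrm{E}e^{itS_n}\to e^{-t^2\sigma^2/2}$ for each fixed real $t$.

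\emph{Reduction step.} Condition (ii) gives $\max_j|Y_j|\xrightarrow{p}0$. It also allows a truncation: set $\tau_n=\max\{k\le r_n:\sum_{j\le k}\mathrm{E}(Y_j^2\mid\mathcal{F}_{j-1})\le \sigma^2+1\}$, which is a stopping time because the conditional variances are predictable. Replacing $Y_j$ by $Y_jI(j\le\tau_n)$ keeps the martingale difference property and keeps (i) and (ii). By (i), the replacement changes $S_n$ only on an event of vanishing probability. So we may assume $\sum_j \mathrm{E}(Y_j^2\mid\mathcal{F}_{j-1})\le\sigma^2+1$ almost surely.

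\emph{Main step.} I would use the expansion $e^{ix}=(1+ix)e^{-x^2/2+r(x)}$ with $|r(x)|\le K|x|^3$ for $|x|\le1$. This gives $e^{itS_n}=T_nU_n$, where $T_n=\prod_j(1+itY_j)$ and $U_n=\exp(-\tfrac{t^2}{2}\sum_jY_j^2+\sum_j r(tY_j))$.
\begin{itemize}
\item[(a)] $\mathrm{E}T_n=1$, by successive conditioning.
\item[(b)] $\{T_n\}$ is uniformly integrable. This is the main obstacle. To handle it, I would also stop when $\sum_{j\le k}Y_j^2$ exceeds $2\sigma^2+2$. Then $|T_n|^2\le\exp(t^2\sum_jY_j^2)$ is bounded, apart from one extra factor $(1+t^2Y_{\tau}^2)$, and that factor is controlled in $L^1$ by (ii).
\item[(c)] $U_n\xrightarrow{p}e^{-t^2\sigma^2/2}$. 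Here I need $\sum_jY_j^2\xrightarrow{p}\sigma^2$. It follows from (i) once $\sum_j(Y_j^2-\mathrm{E}(Y_j^2\mid\mathcal{F}_{j-1}))\to0$ in $L^1$. For that, truncate $Y_j^2$ at level $\varepsilon$ and apply an $L^2$ martingale bound to the truncated part, with the remainder handled by (ii). The cubic term satisfies $\sum_j|tY_j|^3\le|t|^3\max_j|Y_j|\sum_jY_j^2\xrightarrow{p}0$.
\end{itemize}

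\emph{Conclusion.} Write $\mathrm{E}e^{itS_n}-e^{-t^2\sigma^2/2}=\mathrm{E}T_n(U_n-e^{-t^2\sigma^2/2})$, using (a). The factor $U_n$ is bounded by a constant, $T_n$ is uniformly integrable by (b), and $U_n-e^{-t^2\sigma^2/2}\to0$ in probability by (c). Hence the expectation tends to $0$. Lévy's continuity theorem then gives $S_n\xrightarrow{d}N(0,\sigma^2)$. For the paper's use, the complex and multivariate versions follow by the Cramér--Wold device applied to real linear combinations.
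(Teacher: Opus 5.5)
Your argument is correct. The paper gives no proof of this lemma. It only quotes Theorem 35.12 of Billingsley, and Billingsley's proof takes a different route from yours.

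\textbf{Billingsley's route.} He works directly with the predictable quantities in (i). He makes a single stopping so that $\Sigma_k=\sum_{j\le k}\mathrm{E}(Y_j^2\mid\mathcal{F}_{j-1})$ is bounded. Writing $S_k$ for the partial sums and $\sigma_k^2=\mathrm{E}(Y_k^2\mid\mathcal{F}_{k-1})$, he then telescopes $\mathrm{E}\exp(itS_n+\tfrac{t^2}{2}\Sigma_{r_n})-1$ into the terms $\mathrm{E}\bigl[\exp(itS_{k-1}+\tfrac{t^2}{2}\Sigma_k)\bigl(\mathrm{E}(e^{itY_k}\mid\mathcal{F}_{k-1})-e^{-t^2\sigma_k^2/2}\bigr)\bigr]$. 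The prefactor in each term is $\mathcal{F}_{k-1}$-measurable and bounded. Each term is bounded by second-order expansions of the conditional characteristic function and of $e^{-t^2\sigma_k^2/2}$. The errors are controlled by (ii) and the bound on $\Sigma_{r_n}$. This route needs no integrability of long products and no uniform integrability. It also never passes to the raw quadratic variation $\sum_jY_j^2$.

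\textbf{Your McLeish route.} It costs two extra ingredients: the $L^1$ comparison $\sum_j\bigl(Y_j^2-\mathrm{E}(Y_j^2\mid\mathcal{F}_{j-1})\bigr)\to0$, and the second stopping on $\sum_jY_j^2$. In return it proves more. The core argument uses only $\sum_jY_j^2\xrightarrow{p}\sigma^2$, $\max_j|Y_j|\xrightarrow{p}0$ and $\sup_n\mathrm{E}\max_jY_j^2<\infty$. The conditional-variance/Lindeberg form quoted in the paper is then a corollary.

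\textbf{Two small points on the write-up.}
\begin{enumerate}
\item Stop at the first $k$ with $\sum_{j\le k}Y_j^2>2\sigma^2+2$, and this stopping must already be in force in (a). Successive conditioning needs $T_{k-1}Y_k\in L^1$, and that comes from $|T_{k-1}|^2\le e^{t^2(2\sigma^2+2)}(1+t^2\max_jY_j^2)$. Showing the stopping is harmless uses $\sum_jY_j^2\xrightarrow{p}\sigma^2$ from (c). That step needs only your first truncation, so there is no circularity, but the steps should be written in that order. Also, do not reuse $\tau$ for this second stopping index.
\item Boundedness of $U_n$ is simply $|U_n|=|T_n|^{-1}=\prod_j(1+t^2Y_j^2)^{-1/2}\le1$.
\end{enumerate}
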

  \begin{lemma}[\cite{BaiH12C}]\label{convergence_rate_ESD}
  Assume that $0<\theta\leq y_n\leq \Theta<1$ for positive constants $\theta$ and $\Theta$, $\sup_n\sup_{i,j}\bE\lvert x_{ij}\rvert^8<\infty$, $\lim_{n\to\infty}\lambda_1^{\bbT_n}=\lambda_0>0$, $\sup_n \lambda_p^{\bbT_n}<\infty$ and $F^{\bbT_n}\stackrel{d}{\to} H$, a proper distribution function. Then we have, for any $\kappa>0$,
  \begin{align*}
  	& \sup_{x}|\bE F^{\bbB_n}(x)-F^{y_n,H_n}(x)|=\bigo(n^{-1/2}), \\
  	& \sup_{x}|F^{\bbB_n}(x)-F^{y_n,H_n}(x)|=\bigo_{p}(n^{-2/5}), \\
  	& \sup_{x}|F^{\bbB_n}(x)-F^{y_n,H_n}(x)|=\bigo_{a.s.}(n^{-2/5+\kappa}).
  \end{align*}

  \end{lemma}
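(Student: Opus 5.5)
This lemma is cited from \cite{BaiH12C}, so the natural route is to follow their approach and not to re-derive it from scratch. The basic tool is Bai's inequality. It bounds the Kolmogorov distance between two distribution functions by an integral of the difference of their Stieltjes transforms along a horizontal line $z=u+iv$, plus a smoothing term in $v$. I would apply it to $\bE F^{\bbB_n}$ against $F^{y_n,H_p}$, and to $F^{\bbB_n}$ against $F^{y_n,H_p}$, with $v$ chosen as a negative power of $n$.

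For the first (expected) bound, I would proceed as follows.
\begin{itemize}
\item Truncate the entries at $\eta_n n^{1/4}$ and renormalize, exactly as in Subsection \ref{trunnorm}; the eighth moment makes the resulting error $o(n^{-1/2})$ in Kolmogorov distance.
\item Write $\bE\underline{s}_n$ through the perturbed equation \eqref{Es_n(z)}, with remainder $A_n$.
\item Compare with \eqref{equation of s_n^0} as in \eqref{M_n^2}, which gives $|\bE \underline{s}_n-\underline{s}_n^0|\le K|A_n|/|1-y_n\bE\underline{s}_n\underline{s}_n^0\int\cdots|$.
\item Bound $A_n$ by $Kn^{-1}v^{-c}$ for some small power $c$.
\end{itemize}
The condition $y_n\le\Theta<1$ keeps the support away from $0$, and $\lambda_1^{\bbT}\to\lambda_0>0$ ensures $H$ is nondegenerate. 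Together these give a square-root vanishing of the density at the edges, so the stability factor degenerates at worst like $\sqrt{\kappa_u+v}$, where $\kappa_u$ is the distance to the edge. Integrating $n^{-1}v^{-c}(\kappa_u+v)^{-1/2}$ over $u$ and choosing $v\asymp n^{-1/2}$ yields $\bigo(n^{-1/2})$.

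For the random bounds, I would control $s_n-\bE s_n$ through the martingale decomposition in $j$ together with Burkholder's inequality (Lemma \ref{Burkholder_1}). This gives
\[
\bE|s_n(z)-\bE s_n(z)|^{2k}\le K(nv^{2})^{-k}\quad\text{(with edge improvements)}.
\]
Inserting this into Bai's inequality and balancing the fluctuation term $(nv^{2})^{-1/2}$ against the smoothing term of order $v$ (adjusted by the edge behaviour) gives the rate $n^{-2/5}$ in probability. For the almost sure version, I would use high moments ($k$ large, which the truncated entries permit) together with a union bound over a grid of $u$. Borel--Cantelli then costs an extra $n^{\kappa}$, giving $\bigo_{a.s.}(n^{-2/5+\kappa})$.

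The main obstacle is the edge analysis. Near the spectral edges, the denominator $1-y_n\bE\underline{s}_n\underline{s}_n^0\int t^2(1+t\bE\underline{s}_n)^{-1}(1+t\underline{s}_n^0)^{-1}dH_p$ is not bounded away from zero once $v\to0$. One needs a quantitative square-root lower bound on it, uniform in $H_p$. I would first try to derive this from the explicit inverse of \eqref{equation of s_n^0}, $z=-1/\underline{s}+y_n\int t(1+t\underline{s})^{-1}dH_p$, and its nondegenerate critical points.
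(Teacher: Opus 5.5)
The paper gives no proof of this lemma: it is quoted from \cite{BaiH12C}. Deferring to that reference is therefore exactly what the paper does, and the framework you outline (Bai's inequality, the perturbed equation \eqref{Es_n(z)}, martingale control of $s_n-\bE s_n$) is the standard one. However, your sketch of the random bounds does not produce the stated exponent.

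\textbf{The in-probability rate.} From $\bE|s_n-\bE s_n|\le K(nv^2)^{-1/2}=Kn^{-1/2}v^{-1}$, balancing against the smoothing term $v$ gives $v=n^{-1/4}$ and only $\bigo_p(n^{-1/4})$. The loss occurs in the bulk, where $\Im s_n^0\asymp 1$, so no edge refinement can recover it. The smoothing term is $\bigo(v)$ simply because $F^{y_n,H_p}$ has a bounded density.

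What produces the exponent $2/5$ is the sharper bound $\bE|s_n(z)-\bE s_n(z)|^2\le Kn^{-2}v^{-3}$. To get it, write
\[
p(s_n-\bE s_n)=-\sum_j(\bE_j-\bE_{j-1})\bigl[\beta_j\bbr_j^*\bbD_j^{-2}\bbr_j-\widetilde\beta_j n^{-1}\operatorname{tr}\bbT\bbD_j^{-2}\bigr].
\]
Bound each difference through the quadratic-form estimate of Lemma \ref{QMTr}, not by the trivial $2/v$:
\[
\bE|\bbr_j^*\bbD_j^{-2}\bbr_j-n^{-1}\operatorname{tr}\bbT\bbD_j^{-2}|^2\le Kn^{-2}\operatorname{tr}\bbD_j^{-2}(\bbD_j^{-2})^*\le Kn^{-2}v^{-3}\,\Im\operatorname{tr}\bbD_j^{-1}.
\]
This is $\bigo(n^{-1}v^{-3})$ once $\Im s_n$ is bounded with high probability, which follows from the crude bound at $v\ge n^{-2/5}$. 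The $\varepsilon_j$-term from \eqref{beta_j} obeys the same bound, given the usual control of $\beta_j$. Hence $\int|s_n-\bE s_n|\,du=\bigo_p(n^{-1}v^{-3/2})$, and balancing with $v$ gives $v=n^{-2/5}$; at this $v$ the expected part is only $\bigo(n^{-3/5})$. The $2k$-th moment version follows from Burkholder's inequality, because the term $\sum_j\bE|X_j|^{2k}$ is harmless under the trivial bound $|X_j|\le 2/(pv)$. A grid in $u$ plus Borel--Cantelli then costs the factor $n^{\kappa}$.

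\textbf{The expected-value argument.} Its premise is not implied by the hypotheses. The two eigenvalue conditions say that $\bbT_n$ is bounded above and below; in that statement $\lambda_1^{\bbT_n}$ must be the smallest eigenvalue. Together with $y_n\le\Theta<1$, this keeps the support away from $0$, but it does not force square-root edges.

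For example, take $H_p=\frac12\delta_1+\frac12\delta_4$. The two bulk components of $F^{y,H_p}$ merge at $y^*\approx 0.41$. At $y_n=y^*$ the map $z(\underline{s})=-1/\underline{s}+y_n\int t(1+t\underline{s})^{-1}dH_p(t)$ has a real critical point with $z'=z''=0$. There the density vanishes like $|x-x_c|^{1/3}$ and the stability factor $\underline{s}^2z'(\underline{s})$ vanishes like $|z-x_c|^{2/3}$. Near such configurations no square-root constant is uniform in $H_p$. Deriving the lower bound from nondegenerate critical points therefore adds a hypothesis the lemma does not have.

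This looks repairable. At a real critical point, Jensen's inequality gives $z'''\le 6\underline{s}^{-4}(1-y_n^{-1})<0$, so the degeneracy is at worst cusp-like, and $(\kappa_u+v)^{-2/3}$ is still integrable in $u$. But you need a stability estimate that tolerates (near-)cusps. You also need a continuity argument in $v$ for the mixed denominator, which contains $\bE\underline{s}_n$ and not only $\underline{s}_n^0$.

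Finally, the natural bound is $|A_n|\le K(nv)^{-1}$, i.e.\ $c=1$, not a small power. This is exactly the borderline that still yields $n^{-1/2}$ at $v\asymp n^{-1/2}$.
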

  


	\begin{lemma}[Lemma 9 in \cite{Chen81B}]\label{Chen}
	Let $Z_n=X_n+Y_n$, $n=1,2,\dots$, be a sequence of random variables, the distribution functions                                                                                                                                                                                                                                                                                                                                                                                                                                                                                                                                                                                                                                                                                                                                                                                                                                                                                                                                                                                                                                                                                                                                                                                                                                                                                                        of $Z_n$, $X_n$ be $F_n(x)$ and $G_n(x)$, respectively. Suppose there exists a sequence $\epsilon_n$ such that
	$$\mathbb{K}(X_n,Z)= \sup_{x\in\mathbb{R}}\lvert G_n(x)-\Phi(x)\rvert\leq K\epsilon_n,\quad \mathbb{P}\left(\lvert Y_n\rvert\geq K\epsilon_n\right)\leq K\epsilon_n, \quad n=1,2,\dots,$$
	then 
	$$\mathbb{K}(Z_n,Z)=\sup_{x\in\mathbb{R}}\lvert F_n(x)-\Phi(x)\rvert\leq K\epsilon_n,\quad n=1,2,\dots.$$ 
\end{lemma}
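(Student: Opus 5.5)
The plan is a direct sandwiching argument. It uses only the hypotheses on $X_n$ and $Y_n$ and the fact that $\Phi$ is Lipschitz with constant $(2\pi)^{-1/2}$. No structure of the sequences beyond these two hypotheses is needed, and the constant $K$ in the conclusion may differ from the one in the hypotheses, following the paper's convention on $K$.

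Fix $n$ and $x\in\mathbb{R}$, and write $\delta_n=K\epsilon_n$. For the upper bound, split according to the event $\{|Y_n|<\delta_n\}$. On this event, $Z_n\leq x$ forces $X_n\leq x+\delta_n$. Hence
\begin{align*}
F_n(x)\leq \mathbb{P}(X_n\leq x+\delta_n)+\mathbb{P}(|Y_n|\geq \delta_n)\leq G_n(x+\delta_n)+K\epsilon_n .
\end{align*}
Next we replace $G_n$ by $\Phi$ at cost $K\epsilon_n$ and use $|\Phi(x+\delta_n)-\Phi(x)|\leq \delta_n/\sqrt{2\pi}$. This gives
\begin{align*}
F_n(x)-\Phi(x)\leq 2K\epsilon_n+\frac{K\epsilon_n}{\sqrt{2\pi}} .
\end{align*}

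For the lower bound, note that on $\{|Y_n|<\delta_n\}$ the event $\{X_n\leq x-\delta_n\}$ implies $Z_n\leq x$. Therefore
\begin{align*}
F_n(x)\geq G_n(x-\delta_n)-\mathbb{P}(|Y_n|\geq\delta_n)\geq \Phi(x-\delta_n)-2K\epsilon_n\geq \Phi(x)-\Big(2+\frac{1}{\sqrt{2\pi}}\Big)K\epsilon_n .
\end{align*}
Both bounds are uniform in $x$. Taking the supremum over $x$ gives $\mathbb{K}(Z_n,Z)\leq (2+(2\pi)^{-1/2})K\epsilon_n$, which is of the form $K\epsilon_n$.

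I expect no real obstacle. The only points needing care are these:
\begin{itemize}
\item The inclusions between events should be stated with the correct strict or non-strict inequalities. The set where $|Y_n|=\delta_n$ is absorbed into $\mathbb{P}(|Y_n|\geq\delta_n)$, so this causes no difficulty.
\item The limiting law must have a bounded density. This is exactly what turns the shift by $\delta_n$ into an additive $\bigo(\epsilon_n)$ error. For a non-Gaussian limit with unbounded density, the argument would fail at this step.
\end{itemize}
If $\epsilon_n$ is not small, the conclusion holds trivially, since $\mathbb{K}\leq 1$ and the constant can be enlarged.
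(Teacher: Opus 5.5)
Your proposal is correct and follows essentially the same route as the paper: split on $\{|Y_n|<K\epsilon_n\}$ to get $G_n(x-K\epsilon_n)-\mathbb{P}(|Y_n|\geq K\epsilon_n)\leq F_n(x)\leq G_n(x+K\epsilon_n)+\mathbb{P}(|Y_n|\geq K\epsilon_n)$, then replace $G_n$ by $\Phi$ and absorb the shift using that $\Phi$ is Lipschitz. Your lower-bound step is stated more cleanly than the paper's, which contains a stray shift $x-2K\epsilon_n$ and an unnecessary extra subtraction of $\mathbb{P}(|Y_n|\geq K\epsilon_n)$.
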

\begin{proof}
	On one hand, we write
	\begin{align*}
		F_n(x) &= \mathbb{P}(Z_n\leq x)=\mathbb{P}(X_n+Y_n\leq x) \\
		&\leq \mathbb{P}(X_n+Y_n\leq x,\lvert Y_n\rvert \textless K\epsilon_n)+\mathbb{P}(\lvert Y_n\rvert\geq K\epsilon_n) \\
		&\leq \mathbb{P}(X_n\leq x+K\epsilon_n) + \mathbb{P}(\lvert Y_n\rvert\geq K\epsilon_n).
	\end{align*}
	On the other hand,
	\begin{align*}
		F_n(x) &= \mathbb{P}(X_n+Y_n\leq x) \\
		&\geq \mathbb{P}(X_n+Y_n\leq x,\lvert Y_n\rvert\textless K\epsilon_n)-\mathbb{P}(\lvert Y_n\rvert\geq K\epsilon_n) \\
		&\geq \mathbb{P}(X_n\leq x-K\epsilon_n)-\mathbb{P}(\lvert Y_n\rvert\geq K\epsilon_n).
	\end{align*}
	Noting that $\Phi(x)$ is a Lipschitz continuous function, we can get
	\begin{align*}
		F_n(x)-\Phi(x) &\leq \mathbb{P}(X_n\leq x+K\epsilon_n)-\Phi(x+K\epsilon_n)+\Phi(x+K\epsilon_n)-\Phi(x) \\
		&\quad + \mathbb{P}(\lvert Y_n\rvert\geq K\epsilon_n) \\
		&= G_n(x+K\epsilon_n)-\Phi(x+K\epsilon_n)+\Phi(x+K\epsilon_n)-\Phi(x) \\
		&\quad + \mathbb{P}(\lvert Z_n-X_n\rvert\geq K\epsilon_n) \\
		&\leq K\epsilon_n,
	\end{align*}
	and
	\begin{align*}
		F_n(x)-\Phi(x) &\geq \mathbb{P}(X_n\leq x-K\epsilon_n)-\Phi(x-K\epsilon_n)+\Phi(x-K\epsilon_n)-\Phi(x) \\
		&\quad-\mathbb{P}(\lvert Y_n\rvert\geq K\epsilon_n) \\
		&= G_n(x-2K\epsilon_n)-\Phi(x-2K\epsilon_n)+\Phi(x-2K\epsilon_n)-\Phi(x) \\
		&\quad -\mathbb{P}(\lvert Z_n-X_n\rvert\geq K\epsilon_n)  \\
		&\geq -K\epsilon_n.
	\end{align*}
	Since the above two bounds are independent of $x$, we complete the proof.
\end{proof}

\begin{remark}\label{K-S_dist_equi}
	By Markov's inequality, for any fixed $s>0$, if $\bE\lvert Z_n-X_n \rvert^s\leq K\epsilon_n^{1+s}$, then 
	\begin{align}\label{ZsimX}
		\mathbb{P}\left(\lvert Z_n-X_n\rvert\geq K\epsilon_n\right)\leq K\epsilon_n.
	\end{align}
	In the sequel, we denote $Z_n\stackrel{\epsilon_n}{\sim}X_n$  if \eqref{ZsimX} holds.
\end{remark}

	\begin{lemma}[\cite{Burkholder73D}]\label{Burkholder_1}
		Let $\left\lbrace X_k\right\rbrace $ be a complex martingale difference sequence with respect to the increasing $\sigma$-field $\left\lbrace \mathcal{F}_k\right\rbrace $. Then, for $p>1$,
		$$\mathbb{E}\left| \sum X_k\right|^p\leq K_p\mathbb{E}\left(\sum \lvert X_k\rvert^2\right)^{p/2}.$$
	\end{lemma}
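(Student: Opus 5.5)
The inequality in Lemma \ref{Burkholder_1} is Burkholder's classical square-function inequality, and the plan is to follow the standard route: prove it first for real martingales with $p\ge 2$, then pass to complex martingales by splitting into real and imaginary parts. In practice one only needs $p\ge 2$, so I would establish that case carefully and obtain $1<p<2$ by duality or by Burkholder's good-$\lambda$ argument.

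\textbf{Step 1: reduction to real martingales.} Write $X_k=U_k+iV_k$. Both $\{U_k\}$ and $\{V_k\}$ are real martingale difference sequences with respect to $\{\mathcal F_k\}$. Since $|\sum X_k|^p\le 2^{p-1}(|\sum U_k|^p+|\sum V_k|^p)$ and $U_k^2,V_k^2\le |X_k|^2$, the complex statement follows from the real one with constant $2^pK_p$. By monotone convergence we may also assume the sum is finite, $k=1,\dots,N$.

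\textbf{Step 2: the real case, $p\ge 2$.} Set $S_n=\sum_{k\le n}X_k$, $Q=\sum_k X_k^2$ and $S^*=\max_n|S_n|$. I would prove the good-$\lambda$ inequality
\[
\mathbb P(S^*>2\lambda,\ Q^{1/2}\le\delta\lambda)\le c\,\delta^2\,\mathbb P(S^*>\lambda).
\]
To do this, let $\tau$ be the first time $|S_n|>\lambda$. On the event in question, the martingale $S_n-S_{\tau}$, stopped when its square function exceeds $\delta\lambda$, must travel at least roughly $\lambda$. Doob's $L^2$ inequality then gives probability at most $c\delta^2$, conditionally on $\{\tau<\infty\}$.

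\textbf{Step 3: handling the jump.} The expected obstacle is the jump $X_\tau$ at the crossing time: $|S_\tau|$ can exceed $\lambda$ by an uncontrolled amount. This is resolved by noting that $|X_\tau|\le Q^{1/2}\le\delta\lambda$ on the event considered. With that, integrating the good-$\lambda$ inequality against $p\lambda^{p-1}\,d\lambda$ gives
\[
\mathbb E (S^*)^p\le 2^p c\,\delta^2\,\mathbb E(S^*)^p+(2/\delta)^p\,\mathbb E Q^{p/2}.
\]
Choosing $\delta$ small absorbs the first term, provided $\mathbb E (S^*)^p<\infty$. That finiteness can be arranged by truncation, or by Doob's inequality when $\mathbb E|X_k|^p<\infty$, and the general case follows by monotone convergence.

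\textbf{Step 4: the case $1<p<2$.} The same good-$\lambda$ argument applies, since it never used $p\ge2$; only the constants change. This yields the stated inequality for all $p>1$.

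Since the lemma is classical (\cite{Burkholder73D}), in the paper itself one would simply cite it; the sketch above is the argument I would reproduce if a self-contained proof were required.
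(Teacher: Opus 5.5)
\textbf{The gap is in Step 2.} The good-$\lambda$ inequality $\mathbb{P}(S^*>2\lambda,\ Q^{1/2}\le\delta\lambda)\le c\,\delta^2\,\mathbb{P}(S^*>\lambda)$ is false for general discrete-time martingales, so Steps 3--4 have nothing valid to integrate.

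The overshoot you treat in Step 3, the jump $X_\tau$ at the level crossing, is harmless: it only matters on the event itself. The real obstruction is the jump at the time $\sigma$ where you stop the square function. You may only stop at stopping times, so $\sum_{\tau<k\le\sigma}X_k$ necessarily contains the increment $X_\sigma$ that pushes the square function above $\delta\lambda$. Your Doob $L^2$ step needs $\mathbb{E}\sum_{\tau<k\le\sigma}X_k^2\le c\,\delta^2\lambda^2\,\mathbb{P}(\tau<\infty)$ on all of $\{\tau<\infty\}$, not just on the good event, and there $X_\sigma$ is completely uncontrolled.

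Here is a concrete counterexample. Take $\lambda=1$, $M=\lceil 5\delta^{-2}\rceil$ and $s=2.2/M$. Let $X_k=s$ with probability $1-\epsilon$ and $X_k=-s(1-\epsilon)/\epsilon$ with probability $\epsilon$, independently, except that every increment after the first negative one is set to $0$. This is a martingale difference sequence. On the event that all $M$ steps are positive, which has probability $(1-\epsilon)^M\to1$ as $\epsilon\to0$, you get $S^*=2.2>2$ and $Q^{1/2}=2.2/\sqrt M<\delta$. So the left side is close to $1$, while the right side is at most $c\,\delta^2$. The climb is paid for by rare huge negative jumps, and those are exactly what $X_\sigma$ picks up. Your argument is the correct one for continuous-path martingales, where this overshoot does not exist.

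\textbf{The repair.} The standard fix, which is what Burkholder (1973) and Burkholder--Davis--Gundy actually use, is the Davis decomposition. Set $d^*_k=\max_{i\le k}|X_i|$ and write $X_k=Y_k+Z_k$, where $Y_k=X_kI(|X_k|\le 2d^*_{k-1})-\mathbb{E}[X_kI(|X_k|\le 2d^*_{k-1})\mid\mathcal{F}_{k-1}]$.
\begin{itemize}
\item Then $|Y_k|\le 4d^*_{k-1}$, so the jumps of $Y$ are bounded by a predictable quantity.
\item For $p\ge1$, $\sum_k|Z_k|$ has $L^p$ norm at most $C_p\|d^*\|_p\le C_p\|Q^{1/2}\|_p$. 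This holds because $\sum_k|X_k|I(|X_k|>2d^*_{k-1})\le 2d^*$, and the compensator is handled by the dual Doob (Garsia) inequality.
\item For $Y$, your good-$\lambda$ argument now works with control $Q(Y)^{1/2}\vee d^*$ and $\sigma=\inf\{n:\ Q_n(Y)^{1/2}\vee d^*_n>\delta\lambda\}$. Indeed $|Y_\sigma|\le 4d^*_{\sigma-1}\le 4\delta\lambda$, so the stopped square function is $O(\delta\lambda)$.
\item Finally, $Q(Y)^{1/2}\le Q^{1/2}+\sum_k|Z_k|$ returns the bound to $Q$.
\end{itemize}
This gives the inequality for all $p\ge1$, which covers the lemma's range. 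Your ``duality'' alternative for $1<p<2$ would additionally need the reverse inequality, which you have not proved. For comparison, the paper gives no proof at all; it simply cites \cite{Burkholder73D}.
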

	\begin{lemma}\label{Burkholder_2}
		Let $\left\lbrace X_k\right\rbrace$ be a complex martingale difference sequence with respect to the increasing $\sigma$-field $\left\lbrace \mathcal{F}_k\right\rbrace $. Then, for $p\geq 2$,
		$$\mathbb{E}\left|\sum X_k\right|^p\leq K_p\left(\left(\sum \mathbb{E}\lvert X_k\rvert^2\right)^{p/2}+\mathbb{E}\sum \lvert X_k\rvert^p\right).$$
	\end{lemma}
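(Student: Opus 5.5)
The plan is to derive the stated bound from the classical Burkholder--Rosenthal inequality and then replace the conditional square function by the unconditional quantity $\sum_k\mathbb{E}\lvert X_k\rvert^2$.

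\textbf{Step 1 (reduction to real martingales).} Write $X_k=\Re X_k+i\,\Im X_k$. Both $\{\Re X_k\}$ and $\{\Im X_k\}$ are real martingale difference sequences with respect to $\{\mathcal{F}_k\}$. Moreover
\[
\lvert \textstyle\sum X_k\rvert^p\le 2^{p-1}\bigl(\lvert\sum\Re X_k\rvert^p+\lvert\sum\Im X_k\rvert^p\bigr),
\]
and $\lvert\Re X_k\rvert,\lvert\Im X_k\rvert\le\lvert X_k\rvert$. So it suffices to treat real sequences, at the cost of a constant depending only on $p$.

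\textbf{Step 2 (square function).} Apply Lemma \ref{Burkholder_1} to obtain
\[
\mathbb{E}\lvert\textstyle\sum X_k\rvert^p\le K_p\,\mathbb{E}\bigl(\sum\lvert X_k\rvert^2\bigr)^{p/2}.
\]
Then split $\lvert X_k\rvert^2=\mathbb{E}(\lvert X_k\rvert^2\mid\mathcal{F}_{k-1})+d_k$, where $d_k=\lvert X_k\rvert^2-\mathbb{E}(\lvert X_k\rvert^2\mid\mathcal{F}_{k-1})$ is again a martingale difference sequence. For $p\ge 4$, apply Lemma \ref{Burkholder_1} to $\sum d_k$ with exponent $p/2$, and then iterate. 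The iteration is the usual Rosenthal induction on dyadic ranges of $p$, with $2\le p\le 4$ as the base case handled by Doob/Davis-type estimates. It produces the two-term bound
\[
\mathbb{E}\lvert\textstyle\sum X_k\rvert^p\le K_p\Bigl(\mathbb{E}\bigl(\sum_k\mathbb{E}(\lvert X_k\rvert^2\mid\mathcal{F}_{k-1})\bigr)^{p/2}+\sum_k\mathbb{E}\lvert X_k\rvert^p\Bigr).
\]
The diagonal pieces $\sum\lvert d_k\rvert^{p/2}$ produced by the iteration are controlled by $\sum\mathbb{E}\lvert X_k\rvert^p$, using the conditional Jensen inequality.

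\textbf{Step 3 (main obstacle: removing the conditioning).} It remains to compare $\mathbb{E}\bigl(\sum_k\mathbb{E}(\lvert X_k\rvert^2\mid\mathcal{F}_{k-1})\bigr)^{p/2}$ with $\bigl(\sum_k\mathbb{E}\lvert X_k\rvert^2\bigr)^{p/2}$. This is the delicate point, since for a fully general martingale the conditional variances may fluctuate. I will carry out the comparison under the structure in which the lemma is used in this paper: there $X_k$ is a function of the fresh column $\bbr_k$ and of $\mathcal{F}_{k-1}$-measurable matrices with uniformly bounded norm, for example $X_k=(\mathbb{E}_k-\mathbb{E}_{k-1})\bigl(\bbr_k^*\bbA\bbr_k-n^{-1}\mtr\bbT\bbA\bigr)$ with $\lVert\bbA\rVert\le K$.

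In that setting I will first establish the domination
\[
\mathbb{E}(\lvert X_k\rvert^2\mid\mathcal{F}_{k-1})\le K\,\mathbb{E}\lvert X_k\rvert^2 \quad\text{a.s.}
\]
This follows because the conditional variance of such a quadratic form is bounded by $Kn^{-2}\mtr\bbA\bbA^*$ (Lemma \ref{QMTr}-type moment bounds), which is deterministically of the same order as the unconditional second moment. Given this domination, the conditional sum is bounded almost surely by $K\sum_k\mathbb{E}\lvert X_k\rvert^2$. Raising to the power $p/2$ and taking expectations then yields exactly the stated bound, with the second term carried over unchanged from Step 2.

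If I wanted to avoid any structural input, the first thing I would try is to bound $\bigl\lVert\sum_k\mathbb{E}(\lvert X_k\rvert^2\mid\mathcal{F}_{k-1})\bigr\rVert_{p/2}$ via Minkowski's inequality. However, that only gives $\sum_k\lVert X_k\rVert_p^2$, not $\sum_k\mathbb{E}\lvert X_k\rvert^2$. This is why I expect the almost-sure domination of conditional variances to be the genuine crux, and the plan commits to verifying it.
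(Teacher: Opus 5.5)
Your Step 3 cannot be completed, and the obstruction is not technical. With the unconditional term $\bigl(\sum_k\bE\lvert X_k\rvert^2\bigr)^{p/2}$, the inequality as literally stated is false for every $p>2$ (at $p=2$ it is just orthogonality of increments). Here is a counterexample. Take $V$ with $\bP(V=M)=M^{-2}$ and $\bP(V=0)=1-M^{-2}$, and let $\epsilon_1,\dots,\epsilon_n$ be i.i.d.\ Rademacher variables independent of $V$. Put $n=M^2$, $\mathcal{F}_k=\sigma(V,\epsilon_1,\dots,\epsilon_k)$ and $X_k=V\epsilon_k/\sqrt{n}$. Then $\sum_k\bE\lvert X_k\rvert^2=\bE V^2=1$ and $\bE\sum_k\lvert X_k\rvert^p=n^{1-p/2}M^{p-2}=1$. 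However, $\bE\lvert\sum_k X_k\rvert^p=\bE V^p\cdot\bE\lvert n^{-1/2}\sum_k\epsilon_k\rvert^p\ge M^{p-2}\to\infty$.

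The paper gives no proof of this lemma; it is quoting Burkholder's inequality \cite{Burkholder73D}. The correct form of that inequality (as used in \cite{BaiS04C}) has $\bE\bigl(\sum_k\bE(\lvert X_k\rvert^2\mid\mathcal{F}_{k-1})\bigr)^{p/2}$ as its first term. That conditional version is exactly what your Step 2 produces, so Step 2 is where the argument should stop. One detail in Step 2: for $p>4$ the induction also generates $\bE\bigl(\sum_k\bE(\lvert d_k\rvert^2\mid\mathcal{F}_{k-1})\bigr)^{p/4}$. Closing it needs a conditional H\"older interpolation of the fourth moment between the second and $p$-th moments, followed by Young's inequality; conditional Jensen alone is not enough.

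Restricting to the structure of the paper's applications does not rescue the literal statement either. First, it adds hypotheses the lemma does not have. Second, the domination $\bE(\lvert X_k\rvert^2\mid\mathcal{F}_{k-1})\le K\,\bE\lvert X_k\rvert^2$ needs a lower bound on $\bE\lvert X_k\rvert^2$ matching the almost-sure upper bound, and Lemma \ref{QMTr} supplies only upper bounds. Such a lower bound genuinely fails. With $\bbT=\bbI$ and Rademacher entries, $\bbr_k^*\bbA\bbr_k-n^{-1}\mtr\bbT\bbA\equiv0$ for every diagonal $\bbA$. Now take an $\mathcal{F}_{k-1}$-measurable, norm-bounded $\bbA$ that equals $\bbI$ except on an event of probability $\delta$, where it is, say, block-diagonal with $2\times2$ blocks $\bigl(\begin{smallmatrix}0&1\\1&0\end{smallmatrix}\bigr)$. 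The ratio of conditional to unconditional second moments is then of order $1/\delta$ on that event. This is the same mechanism as in the counterexample above.

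What the paper's applications (e.g.\ the proof of Lemma \ref{E|trTDj-1-EtrTDj-1|}) really use, and what is true, is the conditional form combined with an almost-sure deterministic upper bound. An example is $\bE(\lvert X_k\rvert^2\mid\mathcal{F}_{k-1})\le Kn^{-1}$ when the relevant resolvent norms are bounded. With that, no comparison with $\bE\lvert X_k\rvert^2$ is needed. So you should drop Step 3 and state and prove the lemma in its conditional form.
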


	Recalling the definitions stated in the initial of Section \ref{zanting}, the following equations hold.
	\begin{proposition}
		For any Hermitian matrix $\bbC$ ,
		 \begin{align}\label{r_i^*(C+r_ir_i^*)^-1}
 	\bbr_i^{*}(\bbC+\bbr_i\bbr_i^{*})^{-1}=\frac{1}{1+\bbr_i^{*}\bbC^{-1}\bbr_i}\bbr_i^{*}\bbC^{-1}.
 \end{align}
 It follows that 
\begin{align} \label{D^{-1}-D_j^{-1}}
	&  \bbD^{-1}(z)-\bbD_j^{-1}(z)  =  -\frac{\bbD_j^{-1}(z)\bbr_j\bbr_j^{*}\bbD_j^{-1}(z)}{1+\bbr_j^{*}\bbD_j^{-1}(z)\bbr_j} =-\beta_{j}(z)\bbD_j^{-1}(z)\bbr_j\bbr_j^{*}\bbD_j^{-1}(z),
 \end{align}
 and
 \begin{align} \label{D_j^{-1}-D_kj^{-1}}
 	& \bbD_j^{-1}(z)-\bbD_{kj}^{-1}(z)=-\frac{\bbD_{kj}^{-1}(z)\bbr_k\bbr_k^{*}\bbD_{kj}^{-1}(z)}{1+\bbr_k^{*}\bbD_{kj}^{-1}(z)\bbr_k}=-\beta_{kj}(z)\bbD_{kj}^{-1}(z)\bbr_k\bbr_k^{*}\bbD_{kj}^{-1}(z).
 \end{align}
 In addition, 
 \begin{align}\label{beta_1}
 	\beta_j=b_n-\beta_j b_n \gamma_j=b_n-b_n^2\gamma_j+\beta_j b_n^2\gamma_j^2 ,
 \end{align}
 \begin{align}\label{beta_ij decomposition}
 	\beta_{ij}=b_{ij}-\beta_{ij}b_{ij}\gamma_{ij}=b_{ij}-b_{ij}^2\gamma_{ij}+\beta_{ij} b_{ij}^2\gamma_{ij}^2 ,
 \end{align}
 and
 \begin{align}\label{beta_j}
 	\beta_{j}(z)=\widetilde{\beta}_j(z)-\beta_{j}(z)\widetilde{\beta}_j(z)\varepsilon_{j}(z).
 \end{align}

	\end{proposition}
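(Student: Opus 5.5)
The plan is to derive everything from one elementary rank-one computation, the matrix analogue of the Sherman--Morrison formula, and then obtain the scalar identities for $\beta_j$ by pure algebra. Hermitian symmetry of $\bbC$ is not needed. What is needed is that $\bbC$ and $\bbC+\bbr_i\bbr_i^*$ are invertible. This is the situation in all applications: $\bbC=\bbD_j(z)$ or $\bbD_{ij}(z)$ with $z\in\mathcal{C}\setminus\mathbb{R}$, or $z$ real but outside the spectrum on the event $\Xi_n^c$.

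\textbf{Step 1: the rank-one identity.} Right-multiplying $\bbr_i^*\bbC^{-1}$ by $\bbC+\bbr_i\bbr_i^*$ gives
\begin{align*}
\bbr_i^*\bbC^{-1}(\bbC+\bbr_i\bbr_i^*)=\bbr_i^*+(\bbr_i^*\bbC^{-1}\bbr_i)\bbr_i^*=(1+\bbr_i^*\bbC^{-1}\bbr_i)\bbr_i^*.
\end{align*}
The scalar $1+\bbr_i^*\bbC^{-1}\bbr_i$ is nonzero by the matrix determinant lemma, since $\det(\bbC+\bbr_i\bbr_i^*)=\det\bbC\,(1+\bbr_i^*\bbC^{-1}\bbr_i)$ and both determinants are nonzero. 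For $z\in\mathbb{C}^+$ one can see this even more directly, because $\Im(\bbr_i^*\bbD_j^{-1}(z)\bbr_i)>0$. Right-multiplying by $(\bbC+\bbr_i\bbr_i^*)^{-1}$ and dividing by this scalar gives \eqref{r_i^*(C+r_ir_i^*)^-1}. The same computation on the left gives the column version
\begin{align*}
(\bbC+\bbr_i\bbr_i^*)^{-1}\bbr_i=\frac{\bbC^{-1}\bbr_i}{1+\bbr_i^*\bbC^{-1}\bbr_i}.
\end{align*}

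\textbf{Step 2: the resolvent differences.} Use the resolvent identity $\bbD^{-1}-\bbD_j^{-1}=\bbD^{-1}(\bbD_j-\bbD)\bbD_j^{-1}=-\bbD^{-1}\bbr_j\bbr_j^*\bbD_j^{-1}$. Substituting the column version with $\bbC=\bbD_j$ yields \eqref{D^{-1}-D_j^{-1}}. Applying the same argument with $\bbD_j=\bbD_{kj}+\bbr_k\bbr_k^*$ gives \eqref{D_j^{-1}-D_kj^{-1}}.

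\textbf{Step 3: the scalar identities.} These follow from
\begin{align*}
\frac{1}{1+a}-\frac{1}{1+b}=-\frac{a-b}{(1+a)(1+b)}.
\end{align*}
Take $a=\bbr_j^*\bbD_j^{-1}\bbr_j$ and $b=n^{-1}\bE\mtr\bbT\bbD_j^{-1}$, so that $a-b=\gamma_j$. This gives $\beta_j=b_j-\beta_jb_j\gamma_j$. Substituting this expression for $\beta_j$ once more into the right-hand side gives $\beta_j=b_j-b_j^2\gamma_j+\beta_jb_j^2\gamma_j^2$. This is \eqref{beta_1}, with $b_n$ read as $b_j$; by exchangeability $b_j$ does not depend on $j$ and it differs from $b$ by $\bigo(n^{-1})$, which is how the identity is used in the paper. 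The same computation with $\bbD_{ij}$, $b_{ij}$ and $\gamma_{ij}$ gives \eqref{beta_ij decomposition}. Taking instead $b=n^{-1}\mtr\bbT\bbD_j^{-1}$, so that $a-b=\varepsilon_j$, gives \eqref{beta_j}.

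There is no real obstacle in this argument. The only point needing care is the nonvanishing of the denominators, and Step 1's determinant identity handles it uniformly in all cases.
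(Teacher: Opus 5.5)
Your argument is correct and follows the route the paper only signals (it gives no proof; the phrase ``It follows that'' indicates the resolvent differences come from the rank-one formula and the $\beta$-identities from elementary algebra). Your remarks are well taken: Hermitian symmetry of $\bbC$ is unnecessary, and indeed $\bbD_j(z)$ is not Hermitian for non-real $z$, so the paper's stated hypothesis does not literally cover its own use; likewise $b_n$ must be read as $b_j=b_1$ for $\beta_j=b_n-\beta_j b_n\gamma_j$ to hold exactly.

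One small overstatement: the determinant lemma only gives $1+\bbr_i^*\bbC^{-1}\bbr_i\neq 0$. It does not give nonvanishing of $1+n^{-1}\mtr\bbT\bbD_j^{-1}(z)$, or of its expectation, which appear as denominators in $\widetilde{\beta}_j$, $b_j$, $b_{ij}$. For non-real $z$ these are nonzero because their imaginary parts have the sign of $\Im z$ when $\bbT\neq 0$, and in any case the statement presupposes these quantities are defined.
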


	\begin{lemma}\label{bound moment of b_j and beta_j}
		 When $\Xi_n^c$ (defined in \eqref{Xi_n}) occurs, For all $z\in\mathcal{C}$, any $p>0$, $i, j=1,\dots,n$, we have
		\begin{align*}
			& \max\left(\lVert\bbD^{-1}(z)\rVert^p,\lVert \bbD_j^{-1}(z)\rVert^p,\lVert \bbD_{ij}^{-1}(z)\rVert^p\right)\leq K_p, \\
			& \max\left(\lvert \beta_j(z)\rvert^p, \lvert b_j(z)\rvert^p, \lvert \widetilde{\beta}_j(z)\rvert^p\right) \leq K_p.
		\end{align*}
	\end{lemma}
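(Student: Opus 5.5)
The plan is to prove the resolvent bounds first, then the bound on $\beta_j$, and to treat $b_j$ and $\widetilde\beta_j$ separately: they are not exact functionals of the spectrum of $\bbB_n$ and need a concentration argument rather than a purely deterministic one.

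\emph{Resolvent norms.} For $z$ on the horizontal segments $\mathcal{C}_u\cup\overline{\mathcal{C}_u}$ we have $|\Im z|=v_0$. Since $\bbD(z)$, $\bbD_j(z)$ and $\bbD_{ij}(z)$ are Hermitian matrices shifted by $z$, all three inverses have norm at most $v_0^{-1}$, deterministically. On the vertical segments $\mathcal{C}_l,\mathcal{C}_r$ we use that $\|\bbD^{-1}(z)\|\le \operatorname{dist}(z,\mathrm{spec}(\bbB_n))^{-1}$, and on $\Xi_n^c$ this distance is at least $\epsilon/2$.

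For $\bbB_n-\bbr_j\bbr_j^*$ and $\bbB_n-\bbr_i\bbr_i^*-\bbr_j\bbr_j^*$ the largest eigenvalue does not increase, so the right segment is harmless. The smallest eigenvalue may decrease, however (interlacing only gives $\lambda_k(\bbB_n-\bbr_j\bbr_j^*)\ge\lambda_{k+1}(\bbB_n)$). When $x_l<0$ there is no issue, since all eigenvalues are $\ge 0>x_l$. In the case $x_l>0$ I would read $\Xi_n$ as also containing the corresponding exceptional events for the $n$ matrices $\bbB_{(j)}=\sum_{k\ne j}\bbr_k\bbr_k^*$ and the $n^2$ matrices $\bbB_{(ij)}$. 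Each of these is again a general sample covariance matrix with $n-1$ or $n-2$ columns, to which \eqref{outofsupport} applies, so by a union bound the enlarged event still has probability $o(n^{-l})$ for every $l$. With this reading, $\|\bbD_j^{-1}\|,\|\bbD_{ij}^{-1}\|\le 2/\epsilon$ on $\Xi_n^c$, and raising to the power $p$ gives the first line of the lemma.

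\emph{The bound on $\beta_j$.} From \eqref{r_i^*(C+r_ir_i^*)^-1}, $\bbr_j^*\bbD^{-1}\bbr_j=\beta_j\,\bbr_j^*\bbD_j^{-1}\bbr_j$, which gives the exact identity $\beta_j=1-\bbr_j^*\bbD^{-1}(z)\bbr_j$. Hence
\[
|\beta_j|\le 1+\|\bbr_j\|^2\|\bbD^{-1}\|,\qquad \|\bbr_j\|^2\le\lambda_{\max}(\bbB_n)\le x_r,
\]
so $|\beta_j|\le 1+2x_r/\min(\epsilon,v_0)$ on $\Xi_n^c$. The same argument applied to $\bbD_j$ and $\bbD_{ij}$ bounds $\beta_{ij}$.

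\emph{The bound on $b_j$.} Here $b_j$ is deterministic, and I would use the exact relation
\[
b_j-\bE\beta_j=b_j\,\bE[\beta_j\gamma_j],
\]
which is \eqref{beta_1} after taking expectations. It gives $|b_j|\bigl(1-|\bE\beta_j\gamma_j|\bigr)\le|\bE\beta_j|$. We split both expectations over $\Xi_n^c$ and $\Xi_n$:
\begin{itemize}
\item on $\Xi_n^c$, $\beta_j$ is bounded by the previous step, and $\bE|\gamma_j|^2=O(n^{-1})$ follows from the quadratic-form moment bound of Lemma~\ref{QMTr} together with the variance bound on $n^{-1}\operatorname{tr}\bbT\bbD_j^{-1}$;
\item on $\Xi_n$, we only have polynomial bounds, $|\beta_j|\le|z|/|\Im z|$ for nonreal $z$, or one can work at $\Im z\ge n^{-C}$ as in \cite{BaiS04C}; these bounds are multiplied by $\bP(\Xi_n)=o(n^{-l})$ and are therefore negligible.
\end{itemize}
Thus $|\bE\beta_j\gamma_j|=O(n^{-1/2})<1/2$ for large $n$, so $|b_j|\le 2\sup|\bE\beta_j|\le K$, uniformly in $z\in\mathcal{C}$ and $j$.

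\emph{The bound on $\widetilde\beta_j$: the main obstacle.} Bounding $\widetilde\beta_j$ is the hard step, because $1+n^{-1}\operatorname{tr}\bbT\bbD_j^{-1}(z)$ is not controlled by the spectrum of $\bbB_n$ alone and could in principle approach $0$ for real $z$ near $x_l$ or $x_r$. My first attempt would be the identity \eqref{beta_j}, rewritten as $\widetilde\beta_j=\beta_j/(1-\beta_j\varepsilon_j)$. This reduces the problem to showing $|\varepsilon_j|\le 1/(2K)$, which holds off an event of probability $o(n^{-l})$ by the high-moment bounds on $\varepsilon_j$ (Lemma~\ref{QMTr} after truncation at $\eta_n n^{1/4}$).

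As a fallback I would compare with the deterministic quantity: $1/\widetilde\beta_j-1/b_j=n^{-1}(\operatorname{tr}-\bE\operatorname{tr})\bbT\bbD_j^{-1}$ is small with overwhelming probability by the Burkholder-type estimate of Lemma~\ref{E|trTDj-1-EtrTDj-1|} with high moments, and $1/b_j$ is bounded away from $0$ by the previous step.

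Either way, the exceptional event has probability $o(n^{-l})$. I would absorb it into $\Xi_n$, exactly as was done for the resolvents of $\bbB_{(j)}$ and $\bbB_{(ij)}$, which yields $|\widetilde\beta_j|\le K$ on $\Xi_n^c$.
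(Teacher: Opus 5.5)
Your proposal is correct, and it is more careful than the paper's proof, which is shorter and purely deterministic. The paper handles $\mathcal{C}_u\cup\overline{\mathcal{C}_u}$ with the standard bounds $\lVert\bbD^{-1}\rVert\le 1/|\Im z|$ and $|\beta_j|,|b_j|,|\widetilde\beta_j|\le |z|/|\Im z|$ from (3.4) of \cite{BaiS98N}. On the vertical segments it transfers the eigenvalue gap of $\bbB_n$ on $\Xi_n^c$ to all three resolvents. It then bounds $\beta_j$ using only an \emph{upper} bound on $|1+\bbr_j^*\bbD_j^{-1}\bbr_j|$, which goes in the wrong direction, and disposes of $b_j$ and $\widetilde\beta_j$ by ``a similar argument''.

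Each of your departures repairs a point the paper passes over:
\begin{itemize}
\item The identity $\beta_j=1-\bbr_j^*\bbD^{-1}\bbr_j$ supplies the missing lower bound $|1+\bbr_j^*\bbD_j^{-1}\bbr_j|\ge(1+\lVert\bbr_j\rVert^2\lVert\bbD^{-1}\rVert)^{-1}$.
\item On $\mathcal{C}_l$ with $x_l>0$, the gap genuinely does not pass from $\bbB_n$ to $\bbB_{(j)}$ and $\bbB_{(ij)}$. Enlarging $\Xi_n$ is exactly what (3.2) of \cite{BaiS04C} does, and the paper itself uses such events in Lemma \ref{a(v)_quadratic_minus_trace}.
\item The relation $b_j(1-\bE\beta_j\gamma_j)=\bE\beta_j$ is a clean way to bound the deterministic $b_j$ near the real axis, where $|z|/|\Im z|$ is useless.
\item Your probabilistic treatment of $\widetilde\beta_j$ is necessary, not a weakness. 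The quantity $1+n^{-1}\operatorname{tr}\bbT\bbD_j^{-1}(z)$ is not a ratio of determinants. With $\bbT=\bbI$, small $\epsilon$ and $z=x_r$, it vanishes for suitable placements of the spectrum of $\bbB_{(j)}$ inside $[x_l+\epsilon/2,x_r-\epsilon/2)$, all compatible with $\Xi_n^c$. So no deterministic bound on the spectral event alone can hold.
\end{itemize}
What you actually prove is the version with an exceptional set of probability $o(n^{-l})$ (and, for $b_j$, with $|\Im z|\ge n^{-C}$ or truncated expectations). That is the version the later arguments actually use.

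Two points to tighten. First, avoid circularity. The paper proves Lemma \ref{E|trTDj-1-EtrTDj-1|} using the present lemma, so you should not cite it in the $b_j$ step or in your fallback for $\widetilde\beta_j$. You do not need it. The martingale difference $(\bE_i-\bE_{i-1})\operatorname{tr}\bbT\bbD_j^{-1}=(\bE_i-\bE_{i-1})\operatorname{tr}\bbT(\bbD_j^{-1}-\bbD_{ij}^{-1})$ is the trace of $\bbT$ against a rank-one matrix, hence bounded by $\lVert\bbD_j^{-1}\rVert+\lVert\bbD_{ij}^{-1}\rVert$ on the good event. Burkholder then gives $\bE|\operatorname{tr}\bbT\bbD_j^{-1}-\bE\operatorname{tr}\bbT\bbD_j^{-1}|^m\le K_m n^{m/2}$. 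This already yields $\bE|\gamma_j|^2=O(n^{-1})$ and the $o(n^{-l})$ tail you need. Your first route via $\varepsilon_j$ and Lemma \ref{QMTr} is free of this issue anyway.

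Second, the exceptional event for $\widetilde\beta_j$ must be uniform in $z\in\mathcal{C}$ and in $j$. On the good event $|\partial_z\varepsilon_j(z)|\le\lVert\bbD_j^{-1}\rVert^2(\lVert\bbr_j\rVert^2+p/n)\le K$. So a finite net on $\mathcal{C}$ together with a union bound over $j$ suffices.
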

	
	\begin{proof}
		For all $z\in \calC_u $ or $\overline{\calC_u}$, according to (3.4) in \cite{BaiS98N}, we have 
		\begin{align*}
			& \max\left(\lVert\bbD^{-1}(z)\rVert^p,\lVert \bbD_j^{-1}(z)\rVert^p,\lVert \bbD_{ij}^{-1}(z)\rVert^p\right)\leq \frac{1}{(\Im z)^p}, \\
			& \max\left( \lvert \beta_j(z)\rvert^p, \lvert b_j(z)\rvert^p, \lvert  \widetilde{\beta}_j(z)\rvert^p\right) \leq \frac{|z|^p}{(\Im z)^p}.
		\end{align*}
		As for $z\in\calC_r$ or $\overline{\calC_r}$, due to $\Xi_n^c$ occuring, there are non-zero gaps between the eigenvalues of $\bbB_n$ and $x_r$. Then,
		\begin{align}\label{zinCr}
			\max\left(\lVert\bbD^{-1}(z)\rVert^p,\lVert \bbD_j^{-1}(z)\rVert^p,\lVert \bbD_{ij}^{-1}(z)\rVert^p\right)\leq \frac{1}{(x_r-\lambda_{max})^p}\leq \frac{2^p}{\epsilon^p}.
		\end{align}
		Next, from \eqref{zinCr} and $\Xi_n^c$ occuring, we have
		\begin{align*}
			\left|1+\bbr_j^*\bbD_j^{-1}(z)\bbr_j\right|\leq 1+\left|\bbr_j^*\bbD_j^{-1}(z)\bbr_j \right| \leq 1+n^{-1}\lVert \bbD_j^{-1}(z) \rVert \operatorname{tr}\bbB_j \leq 1+K_p(x_r-\epsilon/2).
		\end{align*}
		Further,
		\begin{align*}
			|\beta_j(z)|^p\leq \frac{|1+\bbr_j^*\bbD_j^{-1}(z)\bbr_j|}{|1+\bbr_j^*\bbD_j^{-1}(z)\bbr_j|^{p+1}} \leq K_p(1+K_p(x_r-\epsilon/2)).
		\end{align*}
		Following a similar argument, we can get for all $z\in\calC_r$ or $\overline{\calC_r}$,
		\begin{align*}
			\max\left(\lvert b_j(z)\rvert^p,\lvert \widetilde{\beta}_j(z)\rvert^p\right)\leq K_p.
		\end{align*}
		
		As for $z\in\calC_l$ or $\overline{\calC_l}$, due to $\Xi_n^c$ occuring, there are non-zero gaps between the eigenvalues of $\bbB_n$ and $x_l$. Thus, the above results also true by replacing $x_r$ with $x_l$.		
	\end{proof}
	
	\begin{lemma}[Lemma C.5 in \cite{CuiH25R}]\label{bound spectral norm of zI-bj(z)T}
		For all $z\in\mathcal{C}$,
		\begin{align*}
			\lVert (z\bbI-\frac{n-1}{n}b_j(z)\bbT)^{-1}\rVert\leq K.
		\end{align*}
	\end{lemma}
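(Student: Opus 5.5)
The plan is to reduce the operator norm to a scalar bound. $\bbT$ is Hermitian with eigenvalues $t\in[0,1]$ (recall $\lVert\bbT\rVert\le 1$), so with $c_n=(n-1)/n$ we have
\begin{align*}
\lVert (z\bbI-c_nb_j(z)\bbT)^{-1}\rVert=\max_{t\in\mathrm{spec}(\bbT)}\frac{1}{|z-c_nb_j(z)t|}.
\end{align*}
It therefore suffices to show that $\inf_{z\in\calC,\,t\in[0,1]}|z-c_nb_j(z)t|\ge \delta>0$ for all large $n$, with $\delta$ independent of $n$ and $j$. The first step replaces $b_j$ by its deterministic approximation. By \eqref{b_j(z)+zs_n^0(z)} we have $b_j(z)=-z\underline{s}_n^0(z)+\bigo(n^{-1})$ uniformly on $\calC$, and $c_n=1+\bigo(n^{-1})$. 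Hence
\begin{align*}
z-c_nb_j(z)t=z\bigl(1+t\underline{s}_n^0(z)\bigr)+\bigo(n^{-1}),
\end{align*}
uniformly in $t\in[0,1]$ and $z\in\calC$. So it is enough to bound $|z(1+t\underline{s}_n^0(z))|$ from below uniformly. (If one prefers not to rely on \eqref{b_j(z)+zs_n^0(z)}, which itself uses lemmas that may depend on this one, I would instead use directly that $b_j$ is $1/(1+n^{-1}\bE\mtr\bbT\bbD_j^{-1})$ and argue with its imaginary part; I return to this below.)

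On the horizontal parts $\calC_u,\overline{\calC_u}$ I will use a positivity argument. Since $\underline{F}^{y_n,H_p}$ is supported on $[0,\infty)$,
\begin{align*}
\Im\bigl(z\underline{s}_n^0(z)\bigr)=\int\frac{x\,\Im z}{|x-z|^2}\,d\underline{F}^{y_n,H_p}(x)
\end{align*}
has the same sign as $\Im z$. Consequently $|\Im(z(1+t\underline{s}_n^0))|=|\Im z|+t|\Im(z\underline{s}_n^0)|\ge v_0$. The same computation applied to $b_j$ itself gives $\Im(-b_j(z))$ of the same sign as $\Im z$, because $\Im(n^{-1}\bE\mtr\bbT\bbD_j^{-1})$ has the sign of $\Im z$. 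This yields $|z-c_nb_jt|\ge v_0$ on $\calC_u$ directly, with no $\bigo(n^{-1})$ error needed.

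The main obstacle is the vertical segments $\calC_l,\calC_r$ and their conjugates near the real axis, where $\Im z\to0$. There I will use the identity $t(1-y_n-y_nz s_n^0)-z=-z(1+t\underline{s}_n^0)$, which follows from $\underline{s}_n^0=-(1-y_n)/z+y_ns_n^0$. For real $x=x_r$ (or $x_l$) outside the enlarged support, $\underline{s}_n^0(x)$ is real, and by the Silverstein--Choi characterization of the support $1+t\underline{s}_n^0(x)\neq0$ for $t$ in the support of $H_p$. Quantitatively, for $x\ge x_r$ the Stieltjes transform satisfies $-1/(x-\lambda_{\min}\text{-edge})\le \underline{s}_n^0(x)<0$, and $x_r$ lies at distance at least $\epsilon$ beyond the right edge $\lambda_{\max}^{\bbT}(1+\sqrt{y_n})^2$. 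From this I expect $1+t\underline{s}_n^0(x)\ge c(\epsilon)>0$ for $t\le1$, and a similar argument at $x_l$, where $|z|\ge\min(|x_l|,\cdot)$ stays away from $0$. I then propagate these bounds to $x+iv$, $0\le v\le v_0$, by the Lipschitz continuity of $\underline{s}_n^0$, which is uniform in $n$ at distance $\epsilon/2$ from the support. Where this leaves a gap at intermediate $v$, I will combine it with the imaginary-part bound $|\Im z|\ge v$ and a compactness argument over $(z,t)\in(\calC_l\cup\calC_r)\times[0,1]$.

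Uniformity in $n$ will come from $\underline{s}_n^0\to\underline{s}^0$ uniformly on $\calC$, together with the nonvanishing of the limit $z(1+t\underline{s}^0(z))$. Absorbing the $\bigo(n^{-1})$ error then gives the lower bound $\delta/2$ for large $n$. Finitely many small $n$ are handled trivially.
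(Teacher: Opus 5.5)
\textbf{The central reduction is false, so the argument cannot be completed as planned.} The paper gives no argument for this bound (it is imported from \cite{CuiH25R}), so your sketch has to stand on its own.

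\textbf{What works.} Your observation that $\Im(-b_j(z))$ has the sign of $\Im z$ is correct. It gives $|z-c_nb_j(z)t|\ge|\Im z|$ for all $t\ge0$ and all non-real $z$. This settles everything except small neighbourhoods of the real points $x_l,x_r$ of $\calC$.

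\textbf{Where it fails.} At those two points, $\inf_{t\in[0,1]}|z(1+t\underline{s}_n^0(z))|$ can vanish, so the sufficient condition you set out to prove is false.
\begin{itemize}
\item Left side. Take $\bbT=\bbI$ and $y<1$, so $x_l=(1-\sqrt y)^2-\epsilon\in(0,(1-\sqrt{y_n})^2)$. On this gap $\underline{s}_n^0$ is real and increasing. It tends to $-\infty$ as $x\downarrow0$, because $\underline{F}^{y_n,H_p}$ has an atom $1-y_n$ at the origin, and it equals $-1/(1-\sqrt{y_n})$ at the left edge. Hence $t_*=-1/\underline{s}_n^0(x_l)\in(0,1)$ and $x_l(1+t_*\underline{s}_n^0(x_l))=0$.
\item Right side. $\underline{s}_n^0(x_r)$ lies in $(-1/\lambda_{\max}^{\bbT},0)$, not in $(-1,0)$. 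For $\bbT=\tfrac12\bbI$ and small $\epsilon$ it is close to $-2/(1+\sqrt{y_n})<-1$.
\end{itemize}
So the claim ``$1+t\underline{s}_n^0(x)\ge c(\epsilon)>0$ for $t\le1$'' fails, ``a similar argument at $x_l$'' fails, and the compactness step over $(\calC_l\cup\calC_r)\times[0,1]$ fails. The lemma holds only because $t$ ranges over $\mathrm{spec}(\bbT_p)$. For instance, when $\bbT=\bbI$ one has $1+\underline{s}_n^0(x_l)<1-1/(1-\sqrt{y_n})<0$, which is bounded away from zero but negative.

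\textbf{The missing idea.} You need the position of $-1/\underline{s}_n^0(x)$ relative to $\mathrm{spec}(\bbT_p)$, with a margin uniform in $n$, and it differs on the two sides.
\begin{itemize}
\item Right side. For $x>r_n$, the right end of the support of $F^{y_n,H_p}$, $\underline{s}_n^0(x)$ is real, increasing and tends to $0^-$. By \cite{SilversteinC95A} it never meets $\{-1/t:t\in\mathrm{spec}(\bbT_p)\}$, so it stays in $(-1/\lambda_{\max}^{\bbT},0)$. Its derivative $\int(\lambda-x)^{-2}\,d\underline{F}^{y_n,H_p}(\lambda)$ is at least $x_r^{-2}$ on $(r_n,x_r]$, and $x_r-r_n\ge\epsilon/2$. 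Hence $\underline{s}_n^0(x_r)\ge-1/\lambda_{\max}^{\bbT}+\epsilon/(2x_r^2)$, which gives $1+t\underline{s}_n^0(x_r)\ge c>0$ for $t\in[0,\lambda_{\max}^{\bbT}]$.
\item Left side, $x_l>0$. The same reasoning on the gap $(0,\ell_n)$, with $\ell_n$ the left end of the support, places $\underline{s}_n^0(x_l)$ in $(-\infty,-1/\lambda_{\min}^{\bbT})$ with a margin. So $1+t\underline{s}_n^0(x_l)\le-c<0$ for $t\in[\lambda_{\min}^{\bbT},\lambda_{\max}^{\bbT}]$; note the opposite sign.
\item Left side, $x_l<0$. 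Simply $\underline{s}_n^0(x_l)>0$.
\end{itemize}
A Lipschitz bound on $\underline{s}_n^0$ near $x_l,x_r$, uniform in $n$, then covers $x+iv$ with $v$ small, and your $|\Im z|$ bound covers the rest.

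\textbf{Uniformity in $n$.} This margin has to be produced at finite $n$. Getting uniformity from $\underline{s}_n^0\to\underline{s}^0$ plus non-vanishing of $z(1+t\underline{s}^0(z))$ for $t\in\mathrm{supp}(H)$ says nothing about eigenvalues of $\bbT_p$ lying away from $\mathrm{supp}(H)$. Those eigenvalues still enter the operator norm.

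\textbf{Circularity.} You only need $\sup_{z\in\calC}|b_j(z)+z\underline{s}_n^0(z)|=o(1)$. Obtain it softly from:
\begin{itemize}
\item the identity $-z\bE\underline{s}_n=n^{-1}\sum_j\bE\beta_j$;
\item the closeness of $\beta_j,\widetilde\beta_j,b_j,b$;
\item $\bE\underline{s}_n-\underline{s}_n^0\to0$ uniformly on $\calC$, via a.s.\ ESD convergence plus $\Xi_n^c$.
\end{itemize}
Do not take it from \eqref{b_j(z)+zs_n^0(z)}, which sits downstream of Lemma \ref{|Es_n-s_n^0|}. Your imaginary-part fallback gives nothing near the real axis.
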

	
	\begin{lemma}[Lemma 2.7 in \cite{BaiS98N}]\label{QMTr}
		For $\bbX=(x_1,\dots,x_n)^{T}$ i.i.d. standardized (complex) entries, $\bbC$ $ n\times n$ matrix (complex),  we have, for any $p\geq2$,
		\begin{align*}
			\mathbb{E}\lvert \bbX^{*}\bbC\bbX-\operatorname{tr}\bbC\rvert^p\leq K_p((\mathbb{E}\lvert x_1\rvert^4\operatorname{tr}\bbC\bbC^{*})^{p/2}+\mathbb{E}\lvert x_1\rvert^{2p}\operatorname{tr}(\bbC\bbC^{*})^{p/2}).
		\end{align*}
	\end{lemma}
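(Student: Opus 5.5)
The statement is the classical quadratic-form moment bound (Lemma 2.7 of Bai--Silverstein). I would prove it by splitting $\bbX^{*}\bbC\bbX-\operatorname{tr}\bbC$ into a diagonal part and an off-diagonal part:
\begin{align*}
\bbX^{*}\bbC\bbX-\operatorname{tr}\bbC=\sum_{i}c_{ii}(|x_i|^2-1)+\sum_{i\neq j}\bar x_i c_{ij}x_j .
\end{align*}
By Minkowski's inequality it suffices to bound the $p$-th moment of each part separately.

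\textbf{Diagonal part.} This is a sum of independent mean-zero terms $X_i=c_{ii}(|x_i|^2-1)$, so it is a martingale difference sequence. I would apply Lemma \ref{Burkholder_2}, which gives
\begin{align*}
K_p\Big[\Big(\sum_i|c_{ii}|^2\,\mathbb{E}\lvert |x_1|^2-1\rvert^2\Big)^{p/2}+\sum_i|c_{ii}|^p\,\mathbb{E}\lvert |x_1|^2-1\rvert^p\Big].
\end{align*}
Three elementary facts finish this part:
\begin{itemize}
\item $\mathbb{E}\lvert |x_1|^2-1\rvert^2\le \mathbb{E}|x_1|^4$;
\item $\sum_i|c_{ii}|^2\le\operatorname{tr}\bbC\bbC^{*}$;
\item $\sum_i|c_{ii}|^p\le\sum_i\big((\bbC\bbC^{*})_{ii}\big)^{p/2}\le\operatorname{tr}(\bbC\bbC^{*})^{p/2}$.
\end{itemize}
The last chain uses $|c_{ii}|^2\le(\bbC\bbC^{*})_{ii}$ and then convexity of $t\mapsto t^{p/2}$ for $p\ge2$: the diagonal of a PSD matrix is majorized by its eigenvalues. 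Together with $\mathbb{E}\lvert|x_1|^2-1\rvert^p\le K_p\mathbb{E}|x_1|^{2p}$, this gives the required bound.

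\textbf{Off-diagonal part.} I would write it as a martingale $\sum_{k}Y_k$ with respect to $\mathcal{F}_k=\sigma(x_1,\dots,x_k)$, where
\begin{align*}
Y_k=\bar x_k\sum_{j<k}c_{kj}x_j+x_k\sum_{i<k}\bar x_i c_{ik}.
\end{align*}
Applying Lemma \ref{Burkholder_2} again gives
\begin{align*}
\mathbb{E}\Big|\sum Y_k\Big|^p\le K_p\Big(\sum_k\mathbb{E}|Y_k|^2\Big)^{p/2}+K_p\sum_k\mathbb{E}|Y_k|^p .
\end{align*}
The second-moment piece is bounded by $K\operatorname{tr}\bbC\bbC^{*}$. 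For the $p$-th moment piece, $\mathbb{E}|Y_k|^p\le K_p\mathbb{E}|x_1|^p\,\mathbb{E}\big|\sum_{j<k}c_{kj}x_j\big|^p$ plus the symmetric term. The inner linear form is controlled by Lemma \ref{Burkholder_2} (or Rosenthal's inequality) as
\begin{align*}
K_p\Big[\Big(\sum_{j}|c_{kj}|^2\Big)^{p/2}+\mathbb{E}|x_1|^p\sum_j|c_{kj}|^p\Big].
\end{align*}
Summing over $k$, the first piece is at most $\sum_k\big((\bbC\bbC^{*})_{kk}\big)^{p/2}\le\operatorname{tr}(\bbC\bbC^{*})^{p/2}$, and $\sum_{k,j}|c_{kj}|^p$ is bounded similarly. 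The moment prefactors are at most $\mathbb{E}|x_1|^{2p}$ by Hölder's inequality and $\mathbb{E}|x_1|^2=1$.

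\textbf{Main obstacle.} The main difficulty is a mismatch in the $p$-th moment term. It produces $(\mathbb{E}|x_1|^p)^2\sum_{k,j}|c_{kj}|^p$, which has the right moment order. However, $\sum_{k,j}|c_{kj}|^p$ must be compared with $\operatorname{tr}(\bbC\bbC^{*})^{p/2}$, and for $p>4$ an induction on $p$ may be needed. In Bai--Silverstein this is handled by an induction that reduces $\mathbb{E}|Y_k|^p$ to the $p/2$-th moment of a quadratic form with matrix $\bbC\bbC^{*}$-type entries.

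I would first try the direct row-norm bound $\sum_j|c_{kj}|^p\le\big(\sum_j|c_{kj}|^2\big)^{p/2}$, valid since $p\ge2$. This gives $\sum_k\big((\bbC\bbC^{*})_{kk}\big)^{p/2}\le\operatorname{tr}(\bbC\bbC^{*})^{p/2}$, which would avoid the induction entirely.
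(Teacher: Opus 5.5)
Your diagonal part is correct, and so is your bound on $\sum_k\mathbb{E}|Y_k|^p$. There, the row-norm bound $\sum_j|c_{kj}|^p\le(\sum_j|c_{kj}|^2)^{p/2}$, diagonal majorization for $\bbC\bbC^*$ and $\bbC^*\bbC$, and $(\mathbb{E}|x_1|^p)^2\le\mathbb{E}|x_1|^{2p}$ suffice, so that term needs no induction.

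The gap is the off-diagonal step $\mathbb{E}|\sum_kY_k|^p\le K_p(\sum_k\mathbb{E}|Y_k|^2)^{p/2}+K_p\sum_k\mathbb{E}|Y_k|^p$. With unconditional variances this is Rosenthal's inequality, which is valid for independent summands, so it is fine for your diagonal part. The $Y_k$, however, are only martingale differences. For martingales, Burkholder's inequality requires the conditional quadratic variation $\mathbb{E}(\sum_k\mathbb{E}_{k-1}|Y_k|^2)^{p/2}$. Lemma \ref{Burkholder_2} as printed drops the conditioning; Lemma 2.2 of Bai--Silverstein keeps it.

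The unconditional version fails for your very martingale. Take $p=4$ and real entries with $\mathbb{P}(x_1=\pm n^{1/4})=\tfrac12 n^{-1/2}$ and $\mathbb{P}(x_1=0)=1-n^{-1/2}$, so $\mathbb{E}x_1^2=1$ and $\mathbb{E}x_1^4=n^{1/2}$. Let $c_{k1}=1$ for $k\ge2$ and all other $c_{ij}=0$. Then $Y_k=x_kx_1$ and
\begin{align*}
\mathbb{E}\Big(\sum_kY_k\Big)^4&=\mathbb{E}x_1^4\,\mathbb{E}\Big(\sum_{k\ge2}x_k\Big)^4\ge 3n^{1/2}(n-1)(n-2),\\
\Big(\sum_k\mathbb{E}Y_k^2\Big)^2+\sum_k\mathbb{E}Y_k^4&=(n-1)^2+n(n-1).
\end{align*}
So no universal $K_4$ exists. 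The true size is captured by $\mathbb{E}(\sum_k\mathbb{E}_{k-1}Y_k^2)^2=(n-1)^2n^{1/2}$, because $\mathbb{E}_{k-1}Y_k^2=x_1^2$ is random.

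What you actually have to bound is $\mathbb{E}\big(\sum_k(|a_k|^2+|b_k|^2)\big)^{p/2}$, where $a_k=\sum_{j<k}c_{kj}x_j$ and $b_k=\sum_{i<k}c_{ik}\bar x_i$; note $\mathbb{E}_{k-1}|Y_k|^2\le 2(|a_k|^2+|b_k|^2)$. This is the $p/2$-th moment of nonnegative quadratic forms whose matrices come from the strictly triangular parts of $\bbC$. This term, not $\mathbb{E}|Y_k|^p$ as your last paragraph suggests, is where an induction on $p$ is needed. You also have to get around the triangular truncation, since comparing $\operatorname{tr}(\bbC_L^*\bbC_L)^{p/2}$ with $\operatorname{tr}(\bbC\bbC^*)^{p/2}$ is not a trivial step.

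One way to do both:
\begin{itemize}
\item Observe $a_k=\mathbb{E}_{k-1}(\bbC\bbX)_k$, so Stein's inequality gives $\mathbb{E}(\sum_k|a_k|^2)^{p/2}\le K_p\,\mathbb{E}(\bbX^*\bbC^*\bbC\bbX)^{p/2}$. Handle $b_k$ the same way with $\bbC^{\top}$ and $\bar{\bbX}$.
\item Write $\mathbb{E}(\bbX^*\bbA\bbX)^{p/2}\le K_p\big[(\operatorname{tr}\bbA)^{p/2}+\mathbb{E}|\bbX^*\bbA\bbX-\operatorname{tr}\bbA|^{p/2}\big]$ with $\bbA=\bbC^*\bbC$.
\item Apply the lemma at exponent $p/2$, or a direct second-moment computation if $p<4$.
\item Close the induction using $\operatorname{tr}\bbA^2\le(\operatorname{tr}\bbA)^2$, $\mathbb{E}|x_1|^4\ge1$ and $\mathbb{E}|x_1|^p\le\mathbb{E}|x_1|^{2p}$.
\end{itemize}
Decoupling followed by Rosenthal applied conditionally, or Macaev's bound for triangular truncation on Schatten classes, would also work. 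Without one of these devices, the off-diagonal part is not proved.
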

	\begin{lemma}\label{QMTr_1}
		For nonrandom Hermitian nonnegative definite $p\times p$ matrices $\bbA_l$, $l=1,\dots,k$,
		\begin{align}\label{QMTr_1result}
			\mathbb{E}\left| \prod_{l=1}^{k}(\bbr_1^{*}\bbA_l\bbr_1-n^{-1}\operatorname{tr}\bbT\bbA_l)\right|\leq K_k n^{-k/2}  \prod_{l=1}^{k} \lVert \bbA_l \rVert.
		\end{align}
	\end{lemma}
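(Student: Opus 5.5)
The statement to prove is Lemma~\ref{QMTr_1}. Here $\bbr_1=n^{-1/2}\bbT^{1/2}\bbX_{\cdot 1}$ has truncated entries satisfying $|x_{i1}|<\eta_n n^{1/4}$, $\bE x_{i1}=0$, $\bE|x_{i1}|^2=1$ and $\bE|x_{i1}|^8<\infty$.

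\textbf{Reduction to a single factor.} I will reduce everything to one factor through Hölder's inequality:
\begin{align*}
\bE\Big|\prod_{l=1}^k(\bbr_1^*\bbA_l\bbr_1-n^{-1}\mtr\bbT\bbA_l)\Big|\le\prod_{l=1}^k\Big(\bE|\bbr_1^*\bbA_l\bbr_1-n^{-1}\mtr\bbT\bbA_l|^k\Big)^{1/k}.
\end{align*}
It then suffices to show, for a single nonrandom matrix $\bbA$, that
\begin{align*}
\bE|\bbr_1^*\bbA\bbr_1-n^{-1}\mtr\bbT\bbA|^k\le K_k n^{-k/2}\lVert\bbA\rVert^k .
\end{align*}
For $k<2$, Lyapunov's inequality reduces this to the case $k=2$, so I may assume $k\ge2$.

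\textbf{Applying Lemma~\ref{QMTr}.} Write $\bbr_1^*\bbA\bbr_1-n^{-1}\mtr\bbT\bbA=n^{-1}(\bbX_{\cdot1}^*\bbC\bbX_{\cdot1}-\mtr\bbC)$ with $\bbC=\bbT^{1/2}\bbA\bbT^{1/2}$. Since $\lVert\bbT\rVert\le1$, we have $\lVert\bbC\rVert\le\lVert\bbA\rVert$. Lemma~\ref{QMTr} (Lemma 2.7 in \cite{BaiS98N}) then gives
\begin{align*}
\bE|\bbX_{\cdot1}^*\bbC\bbX_{\cdot1}-\mtr\bbC|^k\le K_k\Big((\bE|x_{11}|^4\,\mtr\bbC\bbC^*)^{k/2}+\bE|x_{11}|^{2k}\,\mtr(\bbC\bbC^*)^{k/2}\Big).
\end{align*}
The first term is at most $K(p\lVert\bbA\rVert^2)^{k/2}$, which is $\bigo(n^{k/2}\lVert\bbA\rVert^k)$ because $p/n\to y$. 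After multiplying by $n^{-k}$, it contributes $\bigo(n^{-k/2}\lVert\bbA\rVert^k)$, as required.

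\textbf{Main obstacle: the high-moment term.} The second term is where the difficulty lies, since $\bE|x_{11}|^{2k}$ may be infinite for large $k$. I will handle it with the truncation. For $2k\le8$, the moment assumption bounds it directly. For $2k>8$, I use $|x_{11}|<\eta_n n^{1/4}$ to get
\begin{align*}
\bE|x_{11}|^{2k}\le(\eta_nn^{1/4})^{2k-8}\,\bE|x_{11}|^8\le Kn^{(k-4)/2}.
\end{align*}
Combining this with $\mtr(\bbC\bbC^*)^{k/2}\le p\lVert\bbA\rVert^k$, the second term times $n^{-k}$ is at most
\begin{align*}
K\,n^{-k}\,n^{(k-4)/2}\,p\,\lVert\bbA\rVert^k=\bigo(n^{-k/2-1}\lVert\bbA\rVert^k).
\end{align*}
This is dominated by $n^{-k/2}$. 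In the case $2k\le8$ the second term is $\bigo(n^{1-k}\lVert\bbA\rVert^k)$, which is also at most $n^{-k/2}\lVert\bbA\rVert^k$ for $k\ge2$.

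\textbf{Remarks.} Hermitian nonnegative definiteness of the $\bbA_l$ is not actually needed; only boundedness of their norms is used. The same argument also covers random $\bbA_l$ that are independent of $\bbr_1$: condition on them and then take expectations of the norms.
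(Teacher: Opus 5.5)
Your proof is correct and follows the same route as the paper. Both bound each factor's $k$-th moment via Lemma \ref{QMTr}, use the truncation $|x_{11}|<\eta_n n^{1/4}$ to control $\bE|x_{11}|^{2k}$ when $2k>8$, and then combine the factors with H\"older's inequality. Two details of yours are slightly more careful than the paper's write-up, which applies Lemma \ref{QMTr} directly to $\bbA_l$ rather than $\bbC=\bbT^{1/2}\bbA\bbT^{1/2}$ and claims it for all exponents $>1$: your use of $\bbC$, and your Lyapunov reduction for $k=1$.
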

	\begin{proof}
		Recalling the truncation procedure from Section \ref{trunnorm},  $\mathbb{E}\lvert X_{11}\rvert^{8}<\infty$, and Lemma \ref{QMTr}, we have, for all $p>1$,
		\begin{align}\label{QMTr_2}
			\mathbb{E} \lvert \bbr_1^{*}\bbA_l \bbr_1-n^{-1}\operatorname{tr}\bbT\bbA_l \rvert^p &\leq K_p n^{-p} [ (\mathbb{E}\lvert x_{11}\rvert^4 \operatorname{tr}\bbA_l\bbA_l^{*})^{p/2}+\mathbb{E}\lvert x_{11}\rvert^{2p}\operatorname{tr}(\bbA_l\bbA_l^{*})^{p/2}] \\
			\nonumber &\leq K_p n^{-p} \lVert \bbA_l\rVert^p \left[ n^{p/2} + (n^{1/4} \eta_n)^{(2p-8)\vee 0}n\right]  \\
			\nonumber &= K_p \lVert \bbA_l\rVert^p n^{-p/2}.
		\end{align}
		Then , (\ref{QMTr_1result}) can get from (\ref{QMTr_2}) and H\"older's inequality.
	\end{proof}
	
	\begin{remark}
		Notice that, while $\bbA_l$ is stated as nonrandom in lemma \ref{QMTr_1}, the result extends to cases where $\bbA_l$ is random, provided that it is independent of $\bbr_1$. In such instances, the right-hand side of (\ref{QMTr_1result}) involves 
		$$K_k n^{-k/2} \mathbb{E} \prod_{l=1}^{k} \lVert \bbA_l \rVert .$$
	\end{remark}
	
	From the above lemma, we can further get the following lemmas.
	\begin{lemma}\label{Q_QMTr}
		For nonrandom Hermitian nonnegative definite $p\times p$ $\bbA_r$, $r=1,\dots,s$ and  $\bbB_l$, $l=1,\dots,k$, we  establish the following inequality:
		\begin{align*}
			&  \left| \mathbb{E} \left( \prod_{r=1}^{s}\bbr_1^{*}\bbA_r \bbr_1\prod_{l=1}^{k}(\bbr_1^{*}\bbB_l\bbr_1-n^{-1}\operatorname{tr}\bbT\bbB_l)\right) \right| \leq K n^{-k/2} \prod_{r=1}^{s}\lVert \bbA_r \rVert \prod_{l=1}^{k} \lVert \bbB_l \rVert, \quad s\geq0,k\geq0.
		\end{align*}
	\end{lemma}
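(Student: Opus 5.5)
The plan is to separate the $\bbr_1$-free factors from the centered quadratic forms, and then apply Hölder's inequality together with Lemma \ref{QMTr_1}. Set
\[
\bbr_1^*\bbA_r\bbr_1=n^{-1}\operatorname{tr}\bbT\bbA_r+(\bbr_1^*\bbA_r\bbr_1-n^{-1}\operatorname{tr}\bbT\bbA_r),
\]
and expand the product $\prod_{r=1}^s\bbr_1^*\bbA_r\bbr_1$. This gives $2^s$ terms, and the constant $K$ may depend on $s$ and $k$. Each term is a product of deterministic factors $n^{-1}\operatorname{tr}\bbT\bbA_r$ and centered factors $\bbr_1^*\bbA_r\bbr_1-n^{-1}\operatorname{tr}\bbT\bbA_r$. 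Each deterministic factor is bounded in modulus by $(p/n)\lVert\bbT\rVert\lVert\bbA_r\rVert\le\lVert\bbA_r\rVert$, using $\lVert\bbT\rVert\le1$ and $y_n<1$.

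A typical term has the form $\prod_{r\in S^c}(n^{-1}\operatorname{tr}\bbT\bbA_r)$ times $\mathbb{E}\prod_{r\in S}(\cdots)\prod_{l=1}^k(\bbr_1^*\bbB_l\bbr_1-n^{-1}\operatorname{tr}\bbT\bbB_l)$. By the triangle inequality its absolute value is at most
\[
\prod_{r\in S^c}\lVert\bbA_r\rVert\;\mathbb{E}\Big|\prod_{r\in S}(\bbr_1^*\bbA_r\bbr_1-n^{-1}\operatorname{tr}\bbT\bbA_r)\prod_{l=1}^k(\bbr_1^*\bbB_l\bbr_1-n^{-1}\operatorname{tr}\bbT\bbB_l)\Big|.
\]
Lemma \ref{QMTr_1}, applied to $|S|+k$ centered factors, bounds this by $K n^{-(|S|+k)/2}\prod_{r}\lVert\bbA_r\rVert\prod_l\lVert\bbB_l\rVert$. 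That quantity is at most $Kn^{-k/2}\prod\lVert\bbA_r\rVert\prod\lVert\bbB_l\rVert$, since $n^{-|S|/2}\le1$. Summing the $2^s$ terms gives the claim, and the cases $s=0$ or $k=0$ are trivially included.

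The only point needing care is the applicability of Lemma \ref{QMTr_1}, which carries the truncation at $\eta_n n^{1/4}$ and the moment bound (\ref{QMTr_2}) for arbitrary powers. Its proof rests on Hölder's inequality together with (\ref{QMTr_2}). In (\ref{QMTr_2}) the growth $(n^{1/4}\eta_n)^{2p-8}n\,n^{-p}$ stays within $n^{-p/2}$ because the truncation level is $n^{1/4}$, so any fixed number of factors is allowed. I expect no real obstacle here. The only subtle point is that the bound must come from $|\mathbb{E}(\cdot)|\le\mathbb{E}|\cdot|$ and not from any cancellation, so no gain beyond $n^{-k/2}$ is claimed.
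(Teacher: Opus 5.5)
Your proof is correct and follows essentially the paper's route. The paper gives no separate proof and only says the lemma follows from Lemma \ref{QMTr_1}; your argument makes this explicit by writing $\bbr_1^*\bbA_r\bbr_1=n^{-1}\operatorname{tr}\bbT\bbA_r+(\bbr_1^*\bbA_r\bbr_1-n^{-1}\operatorname{tr}\bbT\bbA_r)$, bounding the deterministic factors by $\lVert\bbA_r\rVert$, and applying Lemma \ref{QMTr_1} to the $|S|+k$ centered factors in each of the $2^s$ terms.
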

	
	According to (3.2) in \cite{BaiS04C}, Lemma \ref{Q_QMTr} can be extended.
	\begin{lemma}\label{a(v)_quadratic_minus_trace}
		For $a(v)$ and $p\times p$ matrices $\bbB_l(v)$, $l=1,\dots,k$, which are independent of $\bbr_1$, both satisfy
		$$max(\lvert a(v)\rvert,\lVert \bbB_l(v)\rVert)\leq K\left(1+n^s I\left\lbrace \lVert \bbB_n\rVert\geq x_r ~\mbox{or}~ \lambda_{min}^{\widetilde{\bbB}}\leq x_l                                                                                \right\rbrace \right)$$
		for some positive $s$, with $\widetilde{\bbB}$ being $\bbB_n$ or $\bbB_n$ with one or two of $\bbr_j$'s removed. We get
		\begin{align*}
			\left|\mathbb{E}\left(a(v)\prod_{l=1}^{k}(\bbr_1^{*}\bbB_l(v)\bbr_1-n^{-1}\operatorname{tr}\bbT\bbB_l(v))\right)\right|\leq K n^{-k/2}.
		\end{align*}
	\end{lemma}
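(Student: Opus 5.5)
The statement to prove is Lemma \ref{a(v)_quadratic_minus_trace}. I will split on the exceptional event. Write $\Omega_n=\{\lVert \bbB_n\rVert\geq x_r \text{ or } \lambda_{min}^{\widetilde{\bbB}}\leq x_l\}$, taken over the finitely many relevant $\widetilde{\bbB}$: $\bbB_n$ and $\bbB_n$ with one or two of the $\bbr_j$ removed. Then
\[
\mathbb{E}\Big(a(v)\prod_{l=1}^k(\bbr_1^*\bbB_l\bbr_1-n^{-1}\mtr\bbT\bbB_l)\Big)
\]
splits into a term with the indicator of $\Omega_n^c$ and a term with the indicator of $\Omega_n$.

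\textbf{The term on $\Omega_n$.} Since the entries are truncated at $\eta_n n^{1/4}$, each factor obeys the crude bound
\[
|\bbr_1^*\bbB_l\bbr_1-n^{-1}\mtr\bbT\bbB_l|\leq K\lVert \bbB_l\rVert\, p\,\eta_n^2 n^{1/2}/n .
\]
Together with the hypothesis $|a(v)|,\lVert \bbB_l\rVert\leq K(1+n^s)$, the integrand is bounded by a fixed power $n^{c}$, with $c$ depending only on $s$ and $k$. By \eqref{outofsupport}, $\mathbb{P}(\Omega_n)=o(n^{-l})$ for every $l$, so this term is $o(n^{-k/2})$.

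\textbf{The term on $\Omega_n^c$.} Here $|a(v)|\leq K$ and $\lVert\bbB_l(v)\rVert\leq K$, but the indicator is not independent of $\bbr_1$. The obstacle is therefore to remove it so that the conditional argument of Lemma \ref{QMTr_1} and its Remark can be applied.

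I will handle this by passing to the bounded versions
\[
\tilde a=a\,I\{\Omega_{n,1}^c\},\qquad \tilde\bbB_l=\bbB_l\,I\{\Omega_{n,1}^c\},
\]
where $\Omega_{n,1}$ is the analogous event defined through $\bbB_n$ with $\bbr_1$ removed. These are independent of $\bbr_1$ and uniformly bounded. The discrepancy between $\Omega_n^c$ and $\Omega_{n,1}^c$ lies in an event of probability $o(n^{-l})$, again by \eqref{outofsupport} applied to $\bbB_n$ and to its one- and two-column deletions. On that event I use the polynomial bounds from the previous step, so the discrepancy costs only $o(n^{-k/2})$.

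\textbf{The main term.} It then suffices to bound
\[
\Big|\mathbb{E}\Big(\tilde a\prod_l(\bbr_1^*\tilde\bbB_l\bbr_1-n^{-1}\mtr\bbT\tilde\bbB_l)\Big)\Big|
\le K\,\mathbb{E}\Big|\prod_l(\cdots)\Big| .
\]
Conditioning on everything except $\bbr_1$, Lemma \ref{QMTr_1} (in the random-matrix version of its Remark) bounds the right side by $Kn^{-k/2}\mathbb{E}\prod_l\lVert\tilde\bbB_l\rVert\le Kn^{-k/2}$. Lemma \ref{QMTr_1} is stated for nonnegative definite matrices; for general $\tilde\bbB_l$ I decompose each into at most four Hermitian nonnegative parts, or else apply Lemma \ref{QMTr} directly with Hölder's inequality, which gives the same $n^{-1/2}$ per factor using $\mathbb{E}|x|^{2p}\le K(\eta_n n^{1/4})^{2p-8}$.

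This is (3.2) of \cite{BaiS04C}, and the argument above follows it.
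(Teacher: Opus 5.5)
Your overall plan, isolating the exceptional event and then conditioning on everything except $\bbr_1$ to apply Lemma \ref{QMTr_1}, is the standard argument behind (3.2) of \cite{BaiS04C}. The paper itself only cites that result. One step, however, is wrong as written: your claim that $\tilde a=a\,I\{\Omega_{n,1}^c\}$ and $\tilde\bbB_l=\bbB_l\,I\{\Omega_{n,1}^c\}$ are uniformly bounded.

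The hypothesis gives $|a|,\lVert\bbB_l\rVert\le K$ only off $\Omega_n$, and $\Omega_n$ is defined through $\lVert\bbB_n\rVert$, which contains $\bbr_1$. Write $\bbB_{(1)}$ for $\bbB_n$ with $\bbr_1$ removed. Since $\bbB_n=\bbB_{(1)}+\bbr_1\bbr_1^*$, you have $\lVert\bbB_n\rVert\ge\lVert\bbB_{(1)}\rVert$. So the event $\{\lVert\bbB_{(1)}\rVert<x_r\}$ does not exclude $\{\lVert\bbB_n\rVert\ge x_r\}$, and hence $\Omega_{n,1}^c\not\subseteq\Omega_n^c$. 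On $\Omega_{n,1}^c\cap\Omega_n$ you only know $|\tilde a|,\lVert\tilde\bbB_l\rVert\le K(1+n^s)$. (Interlacing works in your favour only for the $\lambda_{\min}$ part.) Consequently the bound $|\mathbb{E}(\tilde a\prod_l\cdots)|\le K\,\mathbb{E}|\prod_l\cdots|$ and the final estimate $\mathbb{E}\prod_l\lVert\tilde\bbB_l\rVert\le K$ are not justified as stated.

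The repair is easy and also shortens the proof. The difficulty you identified, that the indicator is not independent of $\bbr_1$, only arises because you split before conditioning. Since $a$ and the $\bbB_l$ are measurable with respect to $\mathcal{F}_{(1)}=\sigma(\bbr_2,\dots,\bbr_n)$, condition first:
\begin{align*}
\Big|\mathbb{E}\Big(a\prod_{l=1}^k(\bbr_1^*\bbB_l\bbr_1-n^{-1}\mtr\bbT\bbB_l)\Big)\Big|
&\le \mathbb{E}\Big(|a|\,\mathbb{E}\Big[\prod_{l=1}^k\big|\bbr_1^*\bbB_l\bbr_1-n^{-1}\mtr\bbT\bbB_l\big|\,\Big|\,\mathcal{F}_{(1)}\Big]\Big)\\
&\le Kn^{-k/2}\,\mathbb{E}\Big(|a|\prod_{l=1}^k\lVert\bbB_l\rVert\Big).
\end{align*}
Only now use the hypothesis: $\mathbb{E}(|a|\prod_l\lVert\bbB_l\rVert)\le K\,\mathbb{E}(1+n^sI(\Omega_n))^{k+1}\le K(1+n^{s(k+1)}\bP(\Omega_n))\le K$. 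With this route you need neither the $\Omega_n^c$ versus $\Omega_n$ split of the integrand nor the auxiliary event $\Omega_{n,1}$.

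If you prefer to keep your structure, truncate by value instead: set $\tilde a=a\,I\{|a|\le K\}$ and $\tilde\bbB_l=\bbB_l\,I\{\lVert\bbB_l\rVert\le K\}$. These are functions of $a$ and $\bbB_l$ alone, hence independent of $\bbr_1$ and bounded by $K$. By the hypothesis they differ from $a$ and $\bbB_l$ only on $\Omega_n$.

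Your treatment of non-Hermitian $\bbB_l$ and the moment bound $\mathbb{E}|x_{11}|^{2p}\le K(\eta_nn^{1/4})^{2p-8}$ are fine.
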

	
	\begin{lemma}\label{quadratic form minus trace of qudratic form}
		For nonrandom Hermitian nonnegative definite $p\times p$ $\bbD$ with bounded spectral norm,
		\begin{align*}
			\mathbb{E}\lvert \bbr_1^{*}\bbD\bbr_2\bbr_2^{*}\bbD\bbr_1-n^{-1}\operatorname{tr}\bbT\bbD\bbr_2\bbr_2^{*}\bbD\rvert^{p}\leq Kn^{-p+((p/2-2)\vee 0)}\eta_n^{(2p-8)\vee 0}.
		\end{align*}
	\end{lemma}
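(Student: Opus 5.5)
We need to prove the last lemma, Lemma \ref{quadratic form minus trace of qudratic form}:
\[
\mathbb{E}\lvert \bbr_1^{*}\bbD\bbr_2\bbr_2^{*}\bbD\bbr_1-n^{-1}\operatorname{tr}\bbT\bbD\bbr_2\bbr_2^{*}\bbD\rvert^{p}\leq Kn^{-p+((p/2-2)\vee 0)}\eta_n^{(2p-8)\vee 0}.
\]
The plan is to condition on $\bbr_2$ and apply Lemma \ref{QMTr} to the quadratic form in $\bbr_1=n^{-1/2}\bbT^{1/2}\bbX_{\cdot 1}$.

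\textbf{Step 1: rewrite as a quadratic form.} Write the quantity as $n^{-1}(\bbX_{\cdot1}^*\bbC\bbX_{\cdot1}-\operatorname{tr}\bbC)$, where
\[
\bbC=\bbT^{1/2}\bbD\bbr_2\bbr_2^*\bbD\bbT^{1/2}.
\]
Conditionally on $\bbr_2$, the matrix $\bbC$ is nonrandom, Hermitian, and of rank one. Its unique nonzero eigenvalue is $\lambda=\bbr_2^*\bbD\bbT\bbD\bbr_2\le \lVert\bbD\rVert^2\lVert \bbT\rVert\,\bbr_2^*\bbr_2$. Hence
\[
\operatorname{tr}\bbC\bbC^*=\lambda^2,\qquad \operatorname{tr}(\bbC\bbC^*)^{p/2}=\lambda^p .
\]

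\textbf{Step 2: apply Lemma \ref{QMTr} conditionally.} Using the truncation $|x_{ij}|<\eta_n n^{1/4}$, we have $\mathbb{E}|x_{11}|^{2p}\le K(\eta_n n^{1/4})^{(2p-8)\vee0}$. This gives
\[
\mathbb{E}\bigl(|\cdot|^p\mid \bbr_2\bigr)\le K_p n^{-p}\Bigl(\lambda^p+(\eta_n n^{1/4})^{(2p-8)\vee0}\lambda^p\Bigr).
\]

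\textbf{Step 3: take expectation over $\bbr_2$.} Since $\mathbb{E}\lambda^p\le K\,\mathbb{E}(\bbr_2^*\bbr_2)^p$, and $\bbr_2^*\bbr_2=n^{-1}\bbX_{\cdot2}^*\bbT\bbX_{\cdot2}$ has mean $n^{-1}\operatorname{tr}\bbT\le y_n$, it suffices to show that $\mathbb{E}(\bbr_2^*\bbr_2)^p$ is bounded. Lemma \ref{QMTr} applied to $\bbX_{\cdot2}^*\bbT\bbX_{\cdot2}-\operatorname{tr}\bbT$ gives
\[
\mathbb{E}\lvert \bbr_2^*\bbr_2-n^{-1}\operatorname{tr}\bbT\rvert^p\le K n^{-p}\bigl(n^{p/2}+(\eta_n n^{1/4})^{(2p-8)\vee0}\,n\bigr).
\]
This is $O(1)$ because $(2p-8)/4+1-p\le p/2-1-p<0$. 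Therefore $\mathbb{E}\lambda^p\le K$, and
\[
\mathbb{E}|\cdot|^p\le K n^{-p}\bigl(1+\eta_n^{(2p-8)\vee0}n^{((p/2-2)\vee0)}\bigr),
\]
which is the claimed bound $Kn^{-p+((p/2-2)\vee0)}\eta_n^{(2p-8)\vee0}$ (when the maximum is $0$ the bound reads $Kn^{-p}$).

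\textbf{Main obstacle.} The only delicate step is the moment bookkeeping. The factor $(\eta_n n^{1/4})^{2p-8}$ produces exactly the exponent $n^{p/2-2}$ in the statement, so the rank-one structure is essential: it replaces the usual factor $\operatorname{tr}(\bbC\bbC^*)^{p/2}\sim n$ by $\lambda^p=O(1)$. Without it the bound would pick up an extra factor of $n$.

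The hypothesis that $\bbD$ is nonnegative definite is not needed; only $\lVert\bbD\rVert\le K$ is used. Because the bound on $\lambda$ involves only $\lVert\bbD\rVert$ and $\bbr_2^*\bbr_2$, the argument is uniform in $\bbD$, and it extends to random $\bbD$ independent of $\bbr_1,\bbr_2$ with bounded norm.
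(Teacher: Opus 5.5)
Your proof is correct and follows the same route as the paper. You condition on $\bbr_2$, apply Lemma \ref{QMTr} to the rank-one matrix $\bbT^{1/2}\bbD\bbr_2\bbr_2^*\bbD\bbT^{1/2}$, use the truncation bound $\mathbb{E}|x_{11}|^{2p}\le K(\eta_n n^{1/4})^{(2p-8)\vee 0}$, and bound $\mathbb{E}(\bbr_2^*\bbD^2\bbr_2)^p$ by a constant; your bookkeeping is slightly more careful than the paper's, since you keep the power $p/2$ on the $\operatorname{tr}\bbC\bbC^*$ term and verify $\mathbb{E}(\bbr_2^*\bbr_2)^p=O(1)$, while the final absorption of the $Kn^{-p}$ term uses only $\eta_n n^{1/4}\ge 1$, exactly as in the paper.
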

	\begin{proof}
		Recalling the truncation steps in Section \ref{trun},  $\mathbb{E}\lvert x_{11}\rvert^{8}<\infty$, Lemma \ref{QMTr} and \ref{Q_QMTr}, we have, for all $p>1$,
		\begin{align*}
			& \mathbb{E}\lvert \bbr_1^{*}\bbD\bbr_2\bbr_2^{*}\bbD\bbr_1-n^{-1}\operatorname{tr}\bbT\bbD\bbr_2\bbr_2^{*}\bbD\rvert^{p} \\
			\leq& Kn^{-p}[(\mathbb{E}\lvert x_{11}\rvert^4\mathbb{E}\operatorname{tr}\bbD\bbr_2\bbr_2^{*}\bbD\bbD\bbr_2\bbr_2^{*}\bbD)+\mathbb{E}\lvert x_{11}\rvert^{2p}\mathbb{E}tr(\bbD\bbr_2\bbr_2^{*}\bbD)^p] \\
			\leq& n^{-p}[K+K(n^{1/4}\eta_n)^{(2p-8)\vee 0}\mathbb{E}(\bbr_2^{*}\bbD^{2}\bbr_2)^p] \\
			\leq& Kn^{-p+((p/2-2)\vee 0)}\eta_n^{(2p-8)\vee 0}.
		\end{align*}
		Finally, we get the result of this lemma.
	\end{proof}
	
	The following lemma address the expectation of random quadratic forms.
	
	\begin{lemma}[(1.15) in \cite{BaiS04C}]\label{product of two quadratic minus trace of two matrices}
		For nonrandom $n\times n$ $\bbA=(a_{ij})$ and $\bbB=(b_{ij})$,
		\begin{align*}
			&  \mathbb{E}(\bbX_{\cdot 1}^{*}\bbA\bbX_{\cdot 1}-\operatorname{tr}\bbA)(\bbX_{\cdot 1}^{*}\bbB\bbX_{\cdot 1}-\operatorname{tr}\bbB)  \\
			&  \quad\quad=\beta_x \operatorname{tr}(\bbA\circ \bbB)+\alpha_x \operatorname{tr} \bbA\bbB^{T}+\operatorname{tr}\bbA\bbB
		\end{align*}
		where $\circ$ is the Hadamard product of two matrices.
	\end{lemma}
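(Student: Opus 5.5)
The statement to prove is the covariance identity of Lemma \ref{product of two quadratic minus trace of two matrices}: for a column $\bbX_{\cdot 1}=(x_1,\dots,x_n)^{\top}$ with i.i.d. standardized entries and nonrandom $\bbA,\bbB$,
\begin{align*}
\mathbb{E}(\bbX_{\cdot 1}^{*}\bbA\bbX_{\cdot 1}-\operatorname{tr}\bbA)(\bbX_{\cdot 1}^{*}\bbB\bbX_{\cdot 1}-\operatorname{tr}\bbB)=\beta_x \operatorname{tr}(\bbA\circ \bbB)+\alpha_x \operatorname{tr} \bbA\bbB^{T}+\operatorname{tr}\bbA\bbB .
\end{align*}
I would prove it by direct moment expansion.

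Write $\bbX_{\cdot 1}^{*}\bbA\bbX_{\cdot 1}=\sum_{i,j}\bar x_i a_{ij}x_j$. Since $\mathbb{E}\bar x_i x_j=\delta_{ij}$, its mean is $\operatorname{tr}\bbA$. Hence the left-hand side equals
\begin{align*}
\sum_{i,j,k,l}a_{ij}b_{kl}\,\mathbb{E}[\bar x_i x_j\bar x_k x_l]-\operatorname{tr}\bbA\operatorname{tr}\bbB .
\end{align*}
The key step is to evaluate the fourth moment $\mathbb{E}[\bar x_i x_j\bar x_k x_l]$ using independence and $\mathbb{E}x=0$. Every index must appear at least twice, so only these patterns contribute:
\begin{itemize}
\item[(a)] all four indices equal, giving $\mathbb{E}|x_1|^4$;
\item[(b)] $i=j\neq k=l$, giving $1$;
\item[(c)] $i=l\neq j=k$, giving $\mathbb{E}|x_i|^2\mathbb{E}|x_j|^2=1$;
\item[(d)] $i=k\neq j=l$, giving $\mathbb{E}\bar x_i^2\,\mathbb{E}x_j^2=|\mathbb{E}x_1^2|^2=\alpha_x$.
\end{itemize}

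Summing with weights, extending each off-diagonal sum to all pairs and subtracting the diagonal gives:
\begin{itemize}
\item[(b)] $\sum_{i\ne k}a_{ii}b_{kk}=\operatorname{tr}\bbA\operatorname{tr}\bbB-\sum_i a_{ii}b_{ii}$;
\item[(c)] $\sum_{i\ne j}a_{ij}b_{ji}=\operatorname{tr}\bbA\bbB-\sum_i a_{ii}b_{ii}$;
\item[(d)] $\alpha_x\sum_{i\ne j}a_{ij}b_{ij}=\alpha_x\bigl(\operatorname{tr}\bbA\bbB^{T}-\sum_i a_{ii}b_{ii}\bigr)$;
\item[(a)] $\mathbb{E}|x_1|^4\sum_i a_{ii}b_{ii}$.
\end{itemize}
Subtracting $\operatorname{tr}\bbA\operatorname{tr}\bbB$ cancels the product of traces. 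The diagonal terms then carry the coefficient $\mathbb{E}|x_1|^4-2-\alpha_x$, which is exactly $\beta_x$. Since $\sum_i a_{ii}b_{ii}=\operatorname{tr}(\bbA\circ\bbB)$, the identity follows.

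The only delicate point is bookkeeping: no index pattern may be double-counted, and the conjugation convention in pattern (d) must be handled carefully. Whether $\bar x_i^2$ pairs with $x_j^2$, and which transpose appears, should be checked against $\sum a_{ij}b_{ij}=\operatorname{tr}\bbA\bbB^{T}$. The conventions of \cite{BaiS04C} should be matched here, since for complex entries the coefficient in (d) is $\mathbb{E}\bar x^2\,\mathbb{E}x^2=|\mathbb{E}x^2|^2=\alpha_x$ as defined in Assumption \ref{assum8th}. No further obstacle is expected. Finiteness of the fourth moment is guaranteed by Assumption \ref{assum8th}, including after truncation.
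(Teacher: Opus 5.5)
Your proof is correct. The four surviving index patterns give $\operatorname{tr}\bbA\operatorname{tr}\bbB+\operatorname{tr}\bbA\bbB+\alpha_x\operatorname{tr}\bbA\bbB^{T}+(\mathbb{E}|x_1|^4-2-\alpha_x)\sum_i a_{ii}b_{ii}$, and the diagonal coefficient is exactly $\beta_x$ as defined in Assumption \ref{assum8th}. The paper gives no proof of its own; it simply cites the identity as (1.15) of \cite{BaiS04C}, which rests on this same elementary fourth-moment computation, so your argument supplies the intended verification. (The paper's ``$n\times n$'' versus the length-$p$ column $\bbX_{\cdot 1}$ is a dimension slip that does not affect the argument.)
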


	\begin{lemma}[Lemma C.12 in \cite{CuiH25R}]\label{lem3Qua}
		For nonrandom  nonnegative $n\times n$ $\bbA=(a_{ij})$, $\bbB=(b_{ij})$ and $\bbC=(c_{ij})$, there are bounded spectral norm.   For $j=1,\dots,n$, we have
		\begin{align*}
			\lvert\mathbb{E}(\bbX_{\cdot j}^*\bbA\bbX_{\cdot j}-\operatorname{tr}\bbA)(\bbX_{\cdot j}^*\bbB\bbX_{\cdot j}-\operatorname{tr}\bbB)(\bbX_{\cdot j}^*\bbC\bbX_{\cdot j}-\operatorname{tr}\bbC)\rvert\leq Kn.
		\end{align*}
	\end{lemma}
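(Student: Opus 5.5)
The plan is to expand the product of the three centered quadratic forms entrywise and show that every surviving index pattern reduces to a trace-type quantity bounded by $Kp\,\lVert\bbA\rVert\lVert\bbB\rVert\lVert\bbC\rVert$. Since $p/n=y_n<1$, this gives the claimed $Kn$.

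\emph{Expansion.} Write $\xi_{ik}=\bar x_{i}x_{k}-\delta_{ik}$, suppressing the column index $j$. Then
\begin{align*}
\bbX_{\cdot j}^*\bbA\bbX_{\cdot j}-\operatorname{tr}\bbA=\sum_{i,k}a_{ik}\xi_{ik},
\end{align*}
and the expectation in question is
\begin{align*}
\sum_{i_1,k_1,i_2,k_2,i_3,k_3}a_{i_1k_1}b_{i_2k_2}c_{i_3k_3}\,\mathbb{E}[\xi_{i_1k_1}\xi_{i_2k_2}\xi_{i_3k_3}].
\end{align*}
Because the $x_i$ are independent with mean zero, and $\xi_{ii}$ is itself centered, a term vanishes unless every distinct index value among the six slots occurs at least twice. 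It must also not be the case that some factor $\xi_{ii}$ has its index $i$ appearing nowhere else. Hence at most three distinct index values occur. Every moment that appears is of the form $\mathbb{E}|x|^{r}$ or $\mathbb{E}x^2$-type with $r\le 6$. These are bounded uniformly by Assumption \ref{assum8th} and the truncation of Section \ref{trunnorm} (only a sixth moment is needed).

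\emph{Classifying patterns.} I would sort the surviving configurations by the number $d\in\{1,2,3\}$ of distinct indices.
\begin{itemize}
\item[(a)] $d=1$. All six slots equal $i$, giving $\sum_i a_{ii}b_{ii}c_{ii}\,\mathbb{E}(|x_1|^2-1)^3$. Since $|a_{ii}|\le\lVert\bbA\rVert$, this is $\bigo(p)$.
\item[(b)] $d=2$. Patterns such as $\sum_{i\neq k}a_{ii}b_{ik}c_{ki}$ (one diagonal factor and two off-diagonal factors forming a $2$-cycle), or $\sum_{i\ne k}a_{ik}b_{ik}c_{..}$ with transposed pairings. These carry the $\alpha_x$-type moments $|\mathbb{E}x^2|^2$ in the complex case and are governed by Assumption \ref{assumCG} or \ref{assumRG}. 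Each is bounded by $\sum_i|a_{ii}|\sum_k|b_{ik}||c_{ki}|\le \lVert\bbA\rVert\sum_i(\sum_k|b_{ik}|^2)^{1/2}(\sum_k|c_{ki}|^2)^{1/2}\le p\lVert\bbA\rVert\lVert\bbB\rVert\lVert\bbC\rVert$, using $\sum_k|b_{ik}|^2=(\bbB\bbB^*)_{ii}\le\lVert\bbB\rVert^2$. Completing the sums over $i\neq k$ to full sums changes things only by diagonal terms already of type (a).
\item[(c)] $d=3$. Each index appears exactly twice, and all three factors are off-diagonal. The only possibility is a cycle $i\to k\to l\to i$, possibly with some factors transposed or conjugated, giving sums like $\sum a_{ik}b_{kl}c_{li}$ or $\sum a_{ik}b_{lk}c_{li}$. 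After completing the restricted sums, these become $\operatorname{tr}\bbA\bbB\bbC$, $\operatorname{tr}\bbA\bbB^{\top}\bbC$ and similar, each bounded by $p\lVert\bbA\rVert\lVert\bbB\rVert\lVert\bbC\rVert$. The completion corrections have fewer free indices and are handled as in (a)–(b).
\end{itemize}

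\emph{Main obstacle.} The delicate point is the bookkeeping in the restricted sums, where the index values are constrained to be distinct. Removing the distinctness constraints by inclusion–exclusion produces many lower-order pieces. I would handle this uniformly with one observation: any sum in which each matrix contributes one entry, and the free indices form a connected graph with every vertex of degree at least two, is bounded by $p$ times the product of norms. To prove it, keep one index as the outer sum. Control each remaining chain by the Cauchy–Schwarz row bound $\sum_k|m_{ik}|^2\le\lVert\bbM\rVert^2$, or by recognizing a full matrix product whose diagonal entry is bounded by its norm.

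A direct approach via Lemma \ref{QMTr} and Hölder is not good enough. It yields only $\mathbb{E}|\bbX^*\bbA\bbX-\operatorname{tr}\bbA|^3\lesssim p^{3/2}$, so the cancellations exposed by the explicit expansion are genuinely needed.
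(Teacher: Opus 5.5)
The paper does not prove this lemma; it imports it from \cite{CuiH25R}. So the question is whether your expansion closes on its own. Most of it does. You expand in $\xi_{ik}=\bar x_ix_k-\delta_{ik}$, discard patterns with a singleton index or an isolated diagonal factor, complete the triangle sums to $\operatorname{tr}\bbA\bbB\bbC$-type traces, and bound the rest by $Kp\lVert\bbA\rVert\lVert\bbB\rVert\lVert\bbC\rVert\le Kn$. Your remark that Lemma \ref{QMTr} plus H\"older only gives $p^{3/2}$ is also correct.

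There is one surviving $d=2$ configuration that your list omits and that your uniform recipe does not control: two diagonal factors at distinct indices joined by a single off-diagonal factor. It contributes
\begin{align*}
\bigl\lvert\mathbb{E}\lvert x_{11}\rvert^2x_{11}\bigr\rvert^2\sum_{i\neq k}a_{ii}b_{kk}c_{ik},
\end{align*}
plus the analogous terms with the roles of $\bbA,\bbB,\bbC$ permuted. Its coefficient is a product of third moments, which are not among the moment types you list, and it is nonzero for asymmetric entries.

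Your recipe fails on this sum. Fixing $i$ as the outer index and bounding the chain $\sum_k c_{ik}b_{kk}$ by the row bound gives $\sqrt{p}\,\lVert\bbB\rVert\lVert\bbC\rVert$ for each $i$, hence only $Kp^{3/2}$ overall. Entrywise absolute values are no better, since $\sum_{i,k}\lvert c_{ik}\rvert$ can be of order $p^{3/2}\lVert\bbC\rVert$ even for nonnegative definite $\bbC$. An example is $\bbC=\frac12(\bbI+p^{-1/2}\mathbf{H})$ with $\mathbf{H}$ a symmetric Hadamard matrix.

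The fix is to keep the signs, complete the sum, and read it as a bilinear form. With $\mathbf{a}=(a_{ii})_i$ and $\mathbf{b}=(b_{kk})_k$,
\begin{align*}
\Bigl\lvert\sum_{i,k}a_{ii}c_{ik}b_{kk}\Bigr\rvert=\lvert\mathbf{a}^{\top}\bbC\mathbf{b}\rvert\le\lVert\mathbf{a}\rVert\,\lVert\bbC\rVert\,\lVert\mathbf{b}\rVert\le p\,\lVert\bbA\rVert\lVert\bbB\rVert\lVert\bbC\rVert,
\end{align*}
and the removed diagonal $i=k$ is of type (a). Your ``connected, minimum degree two'' principle is therefore true for three edges, but it cannot be proved chain by chain: across the bridge $c_{ik}$ you need this global Cauchy--Schwarz step. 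With this case added, your argument proves the lemma.
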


	\begin{lemma}\label{E|trTDj-1-EtrTDj-1|}
		For all $z\in\mathcal{C}$ and any $m\geq 1$, 
		\begin{align*}
			\mathbb{E}\lvert \operatorname{tr}\bbT\bbD_j^{-1}(z)-\mathbb{E}\operatorname{tr}\bbT\bbD_j^{-1}(z)\rvert^m\leq K.
		\end{align*}
	\end{lemma}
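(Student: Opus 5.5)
We need to prove the lemma: for all $z\in\mathcal{C}$ and any $m\ge 1$, $\mathbb{E}|\operatorname{tr}\bbT\bbD_j^{-1}(z)-\mathbb{E}\operatorname{tr}\bbT\bbD_j^{-1}(z)|^m\le K$. The plan is the standard martingale difference decomposition combined with Burkholder's inequality (Lemma \ref{Burkholder_2}), using the uniform bounds of Lemma \ref{bound moment of b_j and beta_j}. On the vertical segments of $\mathcal{C}$ we work on the event $\Xi_n^c$; the contribution of $\Xi_n$ is handled with the cruder bound $\lVert\bbD_j^{-1}\rVert\le K n^{s}$ near the real axis, together with (\ref{order of Xi_n}). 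Alternatively, one can use the modified-resolvent convention in Lemma \ref{a(v)_quadratic_minus_trace}.

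First write
\begin{align*}
\operatorname{tr}\bbT\bbD_j^{-1}-\mathbb{E}\operatorname{tr}\bbT\bbD_j^{-1}=\sum_{k\neq j}(\mathbb{E}_k-\mathbb{E}_{k-1})\operatorname{tr}\bbT(\bbD_j^{-1}-\bbD_{kj}^{-1}),
\end{align*}
which is valid because $\operatorname{tr}\bbT\bbD_{kj}^{-1}$ does not depend on $\bbr_k$. By (\ref{D_j^{-1}-D_kj^{-1}}), each summand equals $-(\mathbb{E}_k-\mathbb{E}_{k-1})\beta_{kj}\bbr_k^*\bbD_{kj}^{-1}\bbT\bbD_{kj}^{-1}\bbr_k$.

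The crude bound $|\beta_{kj}\bbr_k^*\bbD_{kj}^{-1}\bbT\bbD_{kj}^{-1}\bbr_k|\le K$ only gives $n^{m/2}$. So we expand $\beta_{kj}$ around $\widetilde\beta_{kj}$ via the analogue of (\ref{beta_j}):
\begin{align*}
\beta_{kj}\bbr_k^*\bbD_{kj}^{-1}\bbT\bbD_{kj}^{-1}\bbr_k = \widetilde\beta_{kj}n^{-1}\operatorname{tr}\bbT\bbD_{kj}^{-1}\bbT\bbD_{kj}^{-1}+\widetilde\beta_{kj}\,\epsilon'_{kj}-\beta_{kj}\widetilde\beta_{kj}\epsilon_{kj}\,\bbr_k^*\bbD_{kj}^{-1}\bbT\bbD_{kj}^{-1}\bbr_k,
\end{align*}
where $\epsilon'_{kj}=\bbr_k^*\bbD_{kj}^{-1}\bbT\bbD_{kj}^{-1}\bbr_k-n^{-1}\operatorname{tr}\bbT\bbD_{kj}^{-1}\bbT\bbD_{kj}^{-1}$. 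The first term is independent of $\bbr_k$, so it is annihilated by $\mathbb{E}_k-\mathbb{E}_{k-1}$. Each remaining term is a bounded factor times a centered quadratic form. By Lemma \ref{QMTr_1} (via the extension in Lemma \ref{a(v)_quadratic_minus_trace}), its $q$-th moment is at most $K_q n^{-q/2}$ for $q\le 4$. For larger $q$, the truncation at $\eta_n n^{1/4}$ enters, giving $K_q n^{-q/2}+K_q n^{-q}(\eta_n n^{1/4})^{2q-8}n$, exactly as in (\ref{QMTr_2}).

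Now apply Lemma \ref{Burkholder_2} with $X_k$ equal to these martingale differences. For $m\ge2$,
\begin{align*}
\mathbb{E}|\cdot|^m\le K_m\Big[\Big(\sum_k \mathbb{E}|X_k|^2\Big)^{m/2}+\sum_k\mathbb{E}|X_k|^m\Big]\le K_m\big[(n\cdot n^{-1})^{m/2}+n\cdot n^{-m/2}\big]\le K_m.
\end{align*}
This is the step I expect to be the main obstacle: for large $m$ the truncation-dependent term must still be summable. Here $n\cdot n^{-m}(\eta_n n^{1/4})^{2m-8}n=\eta_n^{2m-8}n^{2-m/2-2}$, which is bounded, so the estimate closes. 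The case $m\in[1,2)$ follows from the case $m=2$ by Jensen's inequality.

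Uniformity in $z\in\mathcal{C}$ holds because every constant comes from Lemma \ref{bound moment of b_j and beta_j}, which is uniform on the contour. The exceptional event $\Xi_n$ contributes $K n^{sm}\mathbb{P}(\Xi_n)^{1/2}=o(1)$, using (\ref{order of Xi_n}) with $l$ large.
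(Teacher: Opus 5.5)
Your proposal is correct and is essentially the paper's proof. Both use the same martingale decomposition over $k\neq j$ and the rank-one identity \eqref{D_j^{-1}-D_kj^{-1}}. Both split $\beta_{kj}=\widetilde\beta_{kj}-\beta_{kj}\widetilde\beta_{kj}\epsilon_{kj}$ so that the $\bbr_k$-free term $\widetilde\beta_{kj}n^{-1}\operatorname{tr}\bbT\bbD_{kj}^{-1}\bbT\bbD_{kj}^{-1}$ is killed by $\mathbb{E}_k-\mathbb{E}_{k-1}$, and then apply Lemma \ref{Burkholder_2} with $O(n^{-q/2})$ moments of centered quadratic forms; your Jensen step for $m\in[1,2)$ is equivalent to the paper's use of Lemma \ref{Burkholder_1}.

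One caveat on your extra remark about $\Xi_n$: there, $\lVert\bbD_j^{-1}(x_r+iv)\rVert$ is only bounded by $1/|v|$. The crude bound $Kn^{s}$ therefore needs the vertical parts of $\mathcal{C}$ kept at distance at least $n^{-s}$ from the real axis, as with the contour $\mathcal{C}_n$ of \cite{BaiS04C}. So the exceptional event is really handled by that restriction, or by the paper's standing convention of working on $\Xi_n^c$, not by a bound valid on all of $\mathcal{C}$.
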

	\begin{proof}
		Due to the martingale difference decomposition, \eqref{D_j^{-1}-D_kj^{-1}},  Lemmas \ref{Burkholder_2}, \ref{bound moment of b_j and beta_j} and \ref{a(v)_quadratic_minus_trace}, we write for any $m>1$,
		\begin{align*}
			&\mathbb{E}\lvert \operatorname{tr}\bbT\bbD_j^{-1}(z)-\mathbb{E}\operatorname{tr}\bbT\bbD_j^{-1}(z)\rvert^m \\
			=&\mathbb{E}\lvert \sum_{i=1}^{n}(\mathbb{E}_i-\mathbb{E}_{i-1})\operatorname{tr} \bbT\bbD_j^{-1}(z)\rvert^m \\
			=&\mathbb{E}\lvert \sum_{i=1}^{n}(\mathbb{E}_i-\mathbb{E}_{i-1})\operatorname{tr} \bbT(\bbD_j^{-1}(z)-\bbD_{ij}^{-1}(z))\rvert^m \\
			=&\mathbb{E}\lvert \sum_{i=1}^{n} (\mathbb{E}_i-\mathbb{E}_{i-1})\beta_{ij}(z)\bbr_i^{*}\bbD_{ij}^{-1}(z)\bbT\bbD_{ij}^{-1}(z)\bbr_i\rvert^m\\
			\leq& K \left[\mathbb{E}\sum_{i=1}^{n}\lvert (\mathbb{E}_i-\mathbb{E}_{i-1})\beta_{ij}(z)\bbr_i^{*}\bbD_{ij}^{-1}(z)\bbT\bbD_{ij}^{-1}(z)\bbr_i\rvert^m\right.  \\
			& \quad\quad\quad\quad \left.  +(\sum_{i=1}^{n}\mathbb{E}\lvert (\mathbb{E}_i-\mathbb{E}_{i-1})\beta_{ij}(z)\bbr_i^{*}\bbD_{ij}^{-1}(z)\bbT\bbD_{ij}^{-1}(z)\bbr_i\rvert^2)^{m/2}\right] \\
			\leq & K \left[  \sum_{i=1}^{n}\mathbb{E}\lvert(\mathbb{E}_i-\mathbb{E}_{i-1})[(\beta_{ij}(z)-\widetilde{\beta}_{ij}(z))\bbr_i^{*}\bbD_{ij}^{-1}(z)\bbT\bbD_{ij}^{-1}(z)\bbr_i \right. \\
			&\quad\quad\quad\quad\quad\quad\quad\quad\quad\quad +\widetilde{\beta}_{ij}(z)\bbr_i^{*}\bbD_{ij}^{-1}(z)\bbT\bbD_{ij}^{-1}(z)\bbr_i]\rvert^m  \\
			&\quad\quad +(\sum_{i=1}^{n}\mathbb{E}\lvert(\mathbb{E}_i-\mathbb{E}_{i-1}) [(\beta_{ij}(z)-\widetilde{\beta}_{ij}(z))\bbr_i^{*}\bbD_{ij}^{-1}(z)\bbT\bbD_{ij}^{-1}(z)\bbr_i \\
			&\quad\quad\quad\quad\quad\quad\quad\quad\quad\quad\quad\quad \left. +\widetilde{\beta}_{ij}(z)\bbr_i^{*}\bbD_{ij}^{-1}(z)\bbT\bbD_{ij}^{-1}(z)\bbr_i ]\rvert^2)^{m/2} \right] \\
			\leq& K\left[ \sum_{i=1}^{n}[\mathbb{E}\lvert \beta_{ij}(z)\widetilde{\beta}_{ij}(z)(\bbr_i^{*}\bbD_{ij}^{-1}(z)\bbr_i-n^{-1}\operatorname{tr}\bbT\bbD_{ij}^{-1}(z))\bbr_i^{*}\bbD_{ij}^{-1}(z)\bbT\bbD_{ij}^{-1}(z)\bbr_i\rvert^m  \right. \\
			& \quad\quad\quad \left. + \mathbb{E}\lvert \widetilde{\beta}_{ij}(z)[\bbr_i^{*}\bbD_{ij}^{-1}(z)\bbT\bbD_{ij}^{-1}(z)\bbr_i-n^{-1}\operatorname{tr} \bbT\bbD_{ij}^{-1}(z)\bbT\bbD_{ij}^{-1}(z)]\rvert^m]\right] \\
			&+K\left[\sum_{i=1}^{n} \mathbb{E}\lvert \beta_{ij}(z)\widetilde{\beta}_{ij}(z)(\bbr_i^{*}\bbD_{ij}^{-1}(z)\bbr_i-n^{-1}\operatorname{tr}\bbT\bbD_{ij}^{-1}(z))\bbr_i^{*}\bbD_{ij}^{-1}(z)\bbT\bbD_{ij}^{-1}(z)\bbr_i\rvert^2\right.\\
			&\quad\quad \left. + \sum_{i=1}^{n}\mathbb{E}\lvert \widetilde{\beta}_{ij}(z)[\bbr_i^{*}\bbD_{ij}^{-1}(z)\bbT\bbD_{ij}^{-1}(z)\bbr_i-n^{-1}\operatorname{tr} \bbT\bbD_{ij}^{-1}(z)\bbT\bbD_{ij}^{-1}(z)]\rvert^2\right]^{m/2} \\
			\leq& K.
		\end{align*}
		If $m=1$, based on the above proof process and Lemma \ref{Burkholder_1},
		\begin{align*}
			&\mathbb{E}\lvert \operatorname{tr}\bbT\bbD_j^{-1}(z)-\mathbb{E}\operatorname{tr}\bbT\bbD_j^{-1}(z)\rvert \\
			=&\mathbb{E}\lvert \sum_{i=1}^{n} (\mathbb{E}_i-\mathbb{E}_{i-1})\beta_{ij}(z)\bbr_i^{*}\bbD_{ij}^{-1}(z)\bbT\bbD_{ij}^{-1}(z)\bbr_i\rvert\\
			\leq & (\sum_{i=1}^{n}\mathbb{E}\lvert(\mathbb{E}_i-\mathbb{E}_{i-1})\beta_{ij}(z)\bbr_i^{*}\bbD_{ij}^{-1}(z)\bbT\bbD_{ij}^{-1}(z)\bbr_i\rvert^2)^{1/2} \\
			\leq & K.
		\end{align*}Then we complete the proof of this lemma.
	\end{proof}
	
\begin{lemma}\label{E|beta_j-b_j|}
		For all $z\in\mathcal{C}$ and any $m\geq 1$,
		\begin{align*}
			\mathbb{E}\lvert \widetilde{\beta}_j(z)-b_j(z)\rvert^m=Kn^{-m}.
		\end{align*}
	\end{lemma}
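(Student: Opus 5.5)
The plan is to write the difference $\widetilde{\beta}_j(z)-b_j(z)$ as a product of two bounded factors and one fluctuation term, and then apply Lemma \ref{E|trTDj-1-EtrTDj-1|}. Since both quantities are reciprocals of $1$ plus a normalized trace,
\begin{align*}
\widetilde{\beta}_j(z)-b_j(z)=\widetilde{\beta}_j(z)\,b_j(z)\,\frac{1}{n}\Big(\mathbb{E}\operatorname{tr}\bbT\bbD_j^{-1}(z)-\operatorname{tr}\bbT\bbD_j^{-1}(z)\Big).
\end{align*}
Taking the $m$-th absolute moment gives
\begin{align*}
\mathbb{E}\lvert \widetilde{\beta}_j-b_j\rvert^m = n^{-m}\lvert b_j\rvert^m\,\mathbb{E}\Big[\lvert\widetilde{\beta}_j\rvert^m\,\lvert \operatorname{tr}\bbT\bbD_j^{-1}-\mathbb{E}\operatorname{tr}\bbT\bbD_j^{-1}\rvert^m\Big].
\end{align*}
The statement's ``$=Kn^{-m}$'' is read as ``$\leq Kn^{-m}$''.

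First, $b_j(z)$ is deterministic, and Lemma \ref{bound moment of b_j and beta_j} gives $\lvert b_j(z)\rvert\le K$ uniformly in $z\in\mathcal{C}$. Since $b_j$ involves an expectation, this bound should be checked via its definition through $\mathbb{E}\operatorname{tr}\bbT\bbD_j^{-1}$. One can also use \eqref{b_j(z)+zs_n^0(z)}, which shows $b_j(z)=-z\underline{s}_n^0(z)+\bigo(n^{-1})$, and $-z\underline{s}_n^0(z)$ is bounded on $\mathcal{C}$.

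Second, apply Cauchy--Schwarz to the expectation:
\begin{align*}
\mathbb{E}\Big[\lvert\widetilde{\beta}_j\rvert^m\lvert \operatorname{tr}\bbT\bbD_j^{-1}-\mathbb{E}\operatorname{tr}\bbT\bbD_j^{-1}\rvert^m\Big]\le \big(\mathbb{E}\lvert\widetilde{\beta}_j\rvert^{2m}\big)^{1/2}\big(\mathbb{E}\lvert \operatorname{tr}\bbT\bbD_j^{-1}-\mathbb{E}\operatorname{tr}\bbT\bbD_j^{-1}\rvert^{2m}\big)^{1/2}.
\end{align*}
The second factor is at most $K$ by Lemma \ref{E|trTDj-1-EtrTDj-1|} with exponent $2m$. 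The first factor is at most $K$ by Lemma \ref{bound moment of b_j and beta_j}. Combining these with the prefactor $n^{-m}\lvert b_j\rvert^m$ gives $Kn^{-m}$.

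The main obstacle is that Lemma \ref{bound moment of b_j and beta_j} controls $\widetilde{\beta}_j$ on $\mathcal{C}_l,\mathcal{C}_r$ only on the event $\Xi_n^c$. The expectations must therefore be split over $\Xi_n$ and $\Xi_n^c$.
\begin{itemize}
\item On $\Xi_n^c$ the argument above applies directly.
\item On $\Xi_n$, use a crude polynomial bound of the type in Lemma \ref{a(v)_quadratic_minus_trace}: all relevant quantities are at most $K(1+n^{s})$ for some fixed $s$. Then use \eqref{order of Xi_n}, $\mathbb{P}(\Xi_n)=o(n^{-l})$ for every $l$, and choose $l$ large enough that this contribution is $o(n^{-m})$.
\end{itemize}
Controlling $\widetilde{\beta}_j$ near the real axis off $\Xi_n^c$ is the delicate point. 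Following the paper's convention of working on $\Xi_n^c$ (where $I(\Xi_n^c)$ is suppressed), this piece is negligible, and the bound $\mathbb{E}\lvert\widetilde{\beta}_j(z)-b_j(z)\rvert^m\le Kn^{-m}$ holds uniformly in $z\in\mathcal{C}$.
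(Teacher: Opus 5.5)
Your argument is the paper's: the same factorization $\widetilde{\beta}_j-b_j=\widetilde{\beta}_j b_j n^{-1}(\mathbb{E}\operatorname{tr}\bbT\bbD_j^{-1}-\operatorname{tr}\bbT\bbD_j^{-1})$, Cauchy--Schwarz, and Lemma~\ref{E|trTDj-1-EtrTDj-1|} at exponent $2m$, with one difference: the paper bounds $\mathbb{E}\lvert\widetilde{\beta}_j\rvert^{2m}$ by passing to $\beta_j$ through \eqref{beta_j}, whereas you quote Lemma~\ref{bound moment of b_j and beta_j} directly. Two of your side remarks do not hold up and should be dropped: the crude $K(1+n^{s})$ bound you invoke on $\Xi_n$ is unavailable on $\mathcal{C}_r$ (and on $\mathcal{C}_l$ when $x_l>0$) as $\Im z\to 0$, where only $\lvert\widetilde{\beta}_j\rvert\le\lvert z\rvert/\lvert\Im z\rvert$ holds; and bounding $b_j$ via \eqref{b_j(z)+zs_n^0(z)} is circular, since that estimate is obtained from Lemma~\ref{|b_j-b|}, which already uses $\lvert b_j\rvert\le K$; so, exactly as in the paper, your proof rests on suppressing $I(\Xi_n^c)$ and on Lemma~\ref{bound moment of b_j and beta_j}.
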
  
	\begin{proof}
		Due to   Lemmas \ref{bound moment of b_j and beta_j}, \ref{a(v)_quadratic_minus_trace} and \ref{E|trTDj-1-EtrTDj-1|}, we write for any $m\geq1$,
		\begin{align*}
			& \mathbb{E}\lvert \widetilde{\beta}_j(z)-b_j(z)\rvert^m \\
			=& \mathbb{E}\lvert \widetilde{\beta}_j(z)b_j(z)(n^{-1}\operatorname{tr}\bbT\bbD_j^{-1}(z)-n^{-1}\mathbb{E}\operatorname{tr}\bbT\bbD_j^{-1}(z))\rvert^m \\
			\leq& K n^{-m} (\mathbb{E}\lvert \operatorname{tr}\bbT\bbD_j^{-1}(z)-\mathbb{E}\operatorname{tr}\bbT\bbD_j^{-1}(z)\rvert^{2m})^{1/2}(\mathbb{E}\lvert\widetilde{\beta}_j(z)\rvert^{2m})^{1/2} \\
			\leq & Kn^{-m}(\mathbb{E}\lvert \beta_j(z)\rvert^{2m}+\mathbb{E}\lvert \beta_j(z)\widetilde{\beta}_j(z)(\bbr_j^*\bbD_j^{-1}(z)\bbr_j-n^{-1}\operatorname{tr}\bbT\bbD_j^{-1}(z))\rvert^{2m})^{1/2}  \\
			\leq & Kn^{-m},
		\end{align*}
	which completes the proof.
	\end{proof}
	
	\begin{lemma}\label{|b_j-b|}
		For all $z\in\mathcal{C}$,
		$$\lvert b_j(z)-b(z)\rvert\leq Kn^{-1}.$$
	\end{lemma}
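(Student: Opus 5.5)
We need to prove $|b_j(z)-b(z)|\le Kn^{-1}$ uniformly on $\mathcal{C}$. The plan is to write the difference of the two reciprocals and reduce it to a difference of expected traces:
\begin{align*}
b_j(z)-b(z)=b_j(z)\,b(z)\,n^{-1}\bigl(\mathbb{E}\operatorname{tr}\bbT\bbD^{-1}(z)-\mathbb{E}\operatorname{tr}\bbT\bbD_j^{-1}(z)\bigr).
\end{align*}
Lemma \ref{bound moment of b_j and beta_j} gives $|b_j(z)|,|b(z)|\le K$ uniformly in $z\in\mathcal{C}$ (on $\Xi_n^c$, which we may assume by \eqref{order of Xi_n}). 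For $|b(z)|$ one argues exactly as for $b_j$. It therefore suffices to show $|\mathbb{E}\operatorname{tr}\bbT(\bbD^{-1}-\bbD_j^{-1})|\le K$.

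By \eqref{D^{-1}-D_j^{-1}},
$$\operatorname{tr}\bbT(\bbD^{-1}-\bbD_j^{-1})=-\beta_j(z)\,\bbr_j^*\bbD_j^{-1}(z)\bbT\bbD_j^{-1}(z)\bbr_j.$$
We bound its expectation by Cauchy--Schwarz:
$$\mathbb{E}|\beta_j|\,|\bbr_j^*\bbD_j^{-1}\bbT\bbD_j^{-1}\bbr_j|\le (\mathbb{E}|\beta_j|^2)^{1/2}\bigl(\mathbb{E}(\|\bbD_j^{-1}\|^4\|\bbr_j\|^4)\bigr)^{1/2}.$$
Lemma \ref{bound moment of b_j and beta_j} controls $\mathbb{E}|\beta_j|^2$ and the norm of $\bbD_j^{-1}$. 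Since $\|\bbT\|\le1$, we have $\mathbb{E}\|\bbr_j\|^4\le n^{-2}\mathbb{E}(\sum_i|x_{ij}|^2)^2\le K$ because $p/n$ is bounded. Hence the trace difference is $\bigo(1)$, and the lemma follows.

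The main obstacle is uniformity on the vertical segments $\mathcal{C}_l,\mathcal{C}_r$, where $\Im z$ may be $0$ and the bounds hold only on $\Xi_n^c$. Off $\Xi_n^c$ I would use the same device as in Lemma \ref{a(v)_quadratic_minus_trace}: the resolvent norms are at most polynomial in $n$ on the (modified) event, while $\mathbb{P}(\Xi_n)=o(n^{-l})$ for every $l$. The contribution of $\Xi_n$ to the expectation is therefore $o(n^{-1})$. The same remark is needed to justify $|b(z)|\le K$ near the real axis. There, if needed, I would use $b=\mathbb{E}\beta_1+\bigo(n^{-1})$ together with $\mathbb{E}\beta_1=-z\mathbb{E}\underline{s}_n$. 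However, \eqref{|b-Ebeta_j|} relies on this lemma, so I would instead bound $b$ directly via $|1+n^{-1}\mathbb{E}\operatorname{tr}\bbT\bbD^{-1}|$ being bounded below. To get that lower bound, I would compare with $1+n^{-1}\mathbb{E}\operatorname{tr}\bbT\bbD_j^{-1}$, whose inverse is bounded, with the two differing by $\bigo(n^{-1})$ as shown above.
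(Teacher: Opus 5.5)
Your proof is correct and follows the paper's route. The paper also writes $b_j-b=b_jb\,n^{-1}\mathbb{E}\operatorname{tr}\bbT(\bbD^{-1}-\bbD_j^{-1})$, applies \eqref{D^{-1}-D_j^{-1}}, and shows $\mathbb{E}|\beta_j\bbr_j^*\bbD_j^{-1}\bbT\bbD_j^{-1}\bbr_j|=\bigo(1)$; it first splits $\beta_j=\widetilde{\beta}_j-\beta_j\widetilde{\beta}_j\varepsilon_j$ via \eqref{beta_j}, a step your direct Cauchy--Schwarz bound makes unnecessary, and your reciprocal-comparison argument for $|b|\le K$ (which avoids the circular use of \eqref{|b-Ebeta_j|}) fills in a step the paper leaves implicit when it cites Lemma \ref{bound moment of b_j and beta_j} for $b$.
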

	
	\begin{proof}
		Due to Lemma \ref{bound moment of b_j and beta_j} and \eqref{D^{-1}-D_j^{-1}}, we have
		\begin{align}\label{b_j-b}
			\lvert b_j(z)-b(z)\rvert &=\lvert b_j(z) b(z) (n^{-1}\mathbb{E}\operatorname{tr}\bbT\bbD^{-1}(z)-n^{-1}\mathbb{E}\operatorname{tr} \bbT\bbD_j^{-1}(z))\rvert  \\
			\nonumber &\leq Kn^{-1} \lvert  \mathbb{E}\operatorname{tr}(\bbD^{-1}(z)-\bbD_j^{-1}(z))\bbT\rvert \\
			\nonumber &= Kn^{-1} \lvert \mathbb{E}\beta_{j}(z) \bbr_j^{*}\bbD_j^{-1}(z)\bbT\bbD_j^{-1}(z)\bbr_j\rvert.
		\end{align}
		Due to \eqref{beta_j} and Lemma \ref{a(v)_quadratic_minus_trace}, we can get
		\begin{align*}
			\eqref{b_j-b} &\leq Kn^{-1}\lvert \mathbb{E}\widetilde{\beta}_j(z)\bbr_j^{*}\bbD_j^{-1}(z)\bbT\bbD_j^{-1}(z)\bbr_j\rvert\\
			&\quad + Kn^{-1}\lvert\mathbb{E}\beta_{j}(z)\widetilde{\beta}_j(z)\varepsilon_j(z)\bbr_j^{*}\bbD_j^{-1}(z)\bbT\bbD_j^{-1}(z)\bbr_j\rvert \\
			&\leq Kn^{-1}\mathbb{E}\lvert\widetilde{\beta}_j(z)\bbr_j^{*}\bbD_j^{-1}(z)\bbT\bbD_j^{-1}(z)\bbr_j\rvert \\
			&\quad + Kn^{-1}\mathbb{E}\lvert\beta_{j}(z)\widetilde{\beta}_j(z)\bbr_j^{*}\bbD_j^{-1}(z)\bbT\bbD_j^{-1}(z)\bbr_j\varepsilon_j(z)\rvert \\
			&\leq Kn^{-1}+Kn^{-3/2}\leq Kn^{-1},
		\end{align*}
	which completes the proof.
	\end{proof}
	
	\begin{lemma}\label{|Es_n-s_n^0|}
		For all $z\in\mathcal{C}$, we have
		\begin{align*}
			\lvert\mathbb{E}\underline{s}_n(z)-\underline{s}_n^0(z)\rvert \leq Kn^{-1}.
		\end{align*}
	\end{lemma}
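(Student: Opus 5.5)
The plan is to compare $\mathbb{E}\underline{s}_n$ and $\underline{s}_n^0$ through the approximate self-consistent equation derived in the nonrandom-part subsection, closing a stability argument with a crude a priori bound on $A_n$. Recall that \eqref{Es_n(z)} gives
\[
\mathbb{E}\underline{s}_n=\Big[-z+y_n\int \frac{t\,dH_p(t)}{1+t\mathbb{E}\underline{s}_n}+A_n/\mathbb{E}\underline{s}_n\Big]^{-1},
\]
while $\underline{s}_n^0$ solves the exact equation \eqref{equation of s_n^0}. Subtracting the two equations yields, exactly as in \eqref{M_n^2},
\[
\mathbb{E}\underline{s}_n-\underline{s}_n^0=-\underline{s}_n^0A_n\Big[1-y_n\mathbb{E}\underline{s}_n\underline{s}_n^0\int\frac{t^2dH_p(t)}{(1+t\mathbb{E}\underline{s}_n)(1+t\underline{s}_n^0)}\Big]^{-1}.
\]
So the lemma reduces to two facts, each uniform in $z\in\mathcal{C}$: a uniform bound $|A_n(z)|\le Kn^{-1}$, and a uniform lower bound on the denominator. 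The factor $\underline{s}_n^0$ is bounded on $\mathcal{C}$, since $\mathcal{C}$ stays at positive distance from the support of $\underline{F}^{y_n,H_p}$.

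For $A_n$, I will use the representation (see (5.2) of \cite{BaiS98N})
\[
nA_n=n\mathbb{E}\beta_1\big[\bbr_1^*\bbD_1^{-1}(\mathbb{E}\underline{s}_n\bbT+\bbI)^{-1}\bbr_1-n^{-1}\mathbb{E}\operatorname{tr}(\mathbb{E}\underline{s}_n\bbT+\bbI)^{-1}\bbT\bbD^{-1}\big].
\]
I first need $\|(\mathbb{E}\underline{s}_n\bbT+\bbI)^{-1}\|\le K$ on $\mathcal{C}$. Since $\mathbb{E}\beta_1=-z\mathbb{E}\underline{s}_n$ and $|b_n-\mathbb{E}\beta_1|\le Kn^{-1}$ by \eqref{|b-Ebeta_j|}, we have $\mathbb{E}\underline{s}_n\bbT+\bbI=-z^{-1}(z\bbI-b_n\bbT)+O(n^{-1})$, and Lemma \ref{bound spectral norm of zI-bj(z)T} then gives the bound. (For $z$ near $0$, one can simply choose $x_l$ away from $0$.)

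Next I expand $\beta_1=b_n-b_n^2\gamma_1+\beta_1b_n^2\gamma_1^2$ by \eqref{beta_1}. The $b_n$ term contributes $b_n\big(\mathbb{E}\operatorname{tr}\bbD_1^{-1}(\cdot)\bbT-\mathbb{E}\operatorname{tr}\bbD^{-1}(\cdot)\bbT\big)$, which is $O(1)$ by \eqref{D^{-1}-D_j^{-1}}. The $\gamma_1$ term is $n$ times a covariance of two centered quadratic forms. By Lemma \ref{Q_QMTr} and Lemma \ref{E|trTDj-1-EtrTDj-1|}, that covariance is $O(n^{-1})$, so the term is $O(1)$. The remainder term is bounded by $n\,\mathbb{E}|\gamma_1|^2\cdot O(1)=O(1)$ via Lemma \ref{a(v)_quadratic_minus_trace}; here the factor $n$ multiplying the quadratic form $\bbr_1^*\bbD_1^{-1}(\cdot)\bbr_1$ must be handled by centering it and using Hölder. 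Together these give $n|A_n|\le K$. All moment bounds are taken on $\Xi_n^c$, via Lemma \ref{bound moment of b_j and beta_j}. The contribution of $\Xi_n$ is negligible: on $\mathcal{C}_u$ the resolvents are trivially bounded by $1/v_0$, and on $\mathcal{C}_l,\mathcal{C}_r$ one uses the polynomial-blowup device of Lemma \ref{a(v)_quadratic_minus_trace} together with $\mathbb{P}(\Xi_n)=o(n^{-l})$.

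The main obstacle is the denominator. I would invoke (4.4) of \cite{BaiS04C} as already quoted in the nonrandom-part subsection: it gives some $\xi<1$ with
\[
\sup_{z\in\mathcal{C}}\Big|y_n\mathbb{E}\underline{s}_n\underline{s}_n^0\int\frac{t^2dH_p}{(1+t\mathbb{E}\underline{s}_n)(1+t\underline{s}_n^0)}\Big|<\xi .
\]
There is a subtlety: that estimate is proved there using $\mathbb{E}\underline{s}_n\to\underline{s}^0$ uniformly on $\mathcal{C}$, and that convergence itself follows from $A_n\to0$ uniformly. The argument is therefore not circular, since only the crude step $|A_n|\le K/n$ above feeds into it. The Cauchy–Schwarz bound against $\Im$-parts handles $\mathcal{C}_u$, and continuity plus real-valuedness handles the vertical segments. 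Dividing then yields $|\mathbb{E}\underline{s}_n-\underline{s}_n^0|\le K|A_n|\le Kn^{-1}$ uniformly on $\mathcal{C}$.
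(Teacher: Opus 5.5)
Your proposal is correct in outline and shares the paper's skeleton: subtract the exact equation for $\underline{s}_n^0$ from the perturbed one for $\mathbb{E}\underline{s}_n$, show $A_n=O(n^{-1})$, and keep the coefficient $1-y_n\mathbb{E}\underline{s}_n\underline{s}_n^0\int t^2(1+t\mathbb{E}\underline{s}_n)^{-1}(1+t\underline{s}_n^0)^{-1}dH_p(t)$ away from zero. You obtain both estimates by a different route, however.

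The paper transplants Section 5 of \cite{BaiS98N}. There the error is bounded directly, using the uniform resolvent bound to get $\sup_z\mathbb{E}\operatorname{tr}\bbD_1^{-1}\bar{\bbD}_1^{-1}\le Kn$. Stability comes from a finite-$n$ Cauchy--Schwarz contraction on imaginary parts: the $\underline{s}_n^0$-factor is rewritten, via the imaginary part of its equation, as the ratio the paper bounds by $1-\varrho$.

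You instead follow Section 4 of \cite{BaiS04C}, which is the machinery of the paper's own nonrandom-part subsection. A crude version of the $nA_n$ expansion gives $n|A_n|\le K$, and stability is the asymptotic bound (4.4) of \cite{BaiS04C}, inherited from the limit $y\underline{s}^2\int t^2(1+t\underline{s})^{-2}dH(t)$.

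Each route has its advantage. Yours is economical, since the lemma becomes a by-product of computations the paper performs anyway. It also handles the vertical segments down to the real axis by continuity. By contrast, the imaginary-part argument for the $\mathbb{E}\underline{s}_n$-factor needs $\Im z-\Im(A_n/\mathbb{E}\underline{s}_n)>0$, which is delicate where $\Im z\lesssim n^{-1}$. The price is that your bound is only asymptotic. It relies on $y_n\to y$, $H_p\to H$ and on the qualitative uniform convergence $\mathbb{E}\underline{s}_n\to\underline{s}$ on $\mathcal{C}$.

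\textbf{The non-circularity justification must be replaced.} You say the uniform convergence $\mathbb{E}\underline{s}_n\to\underline{s}$ behind (4.4) ``follows from $A_n\to0$''. It does not on its own. Passing from $A_n\to0$ to $\mathbb{E}\underline{s}_n-\underline{s}_n^0\to0$ needs exactly the lower bound on the coefficient that (4.4) is meant to supply, so as written the chain is circular. In \cite{BaiS04C} this convergence, their (4.1), is proved independently. It uses $F^{\bbB_n}\to F^{y,H}$ a.s., dominated convergence and equicontinuity of $(\lambda-z)^{-1}$ on $\mathcal{C}_u$. For the vertical segments it adds confinement of the spectrum, which here means working on $\Xi_n^c$. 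Cite that result instead, and your argument closes.

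\textbf{The strict inequality at $x_l,x_r$ needs one more step.} Real-valuedness alone does not give it at these real crossing points. Differentiate $x=-1/\underline{s}+y\int t(1+t\underline{s})^{-1}dH(t)$ and use $\underline{s}'(x)>0$. This gives $0\le y\underline{s}^2\int t^2(1+t\underline{s})^{-2}dH(t)<1$.

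\textbf{Minor sign slip.} The correct identity is $\mathbb{E}\underline{s}_n\bbT+\bbI=z^{-1}(z\bbI-\mathbb{E}\beta_1\bbT)$, without the minus sign. Its inverse carries the factor $z$ rather than $z^{-1}$, so no special care is needed near $z=0$.
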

	
	\begin{proof}
		The proof of this lemma is based on arguments in Section 5 of \cite{BaiS98N}, with the extension that the result holds for all $z\in\mathcal{C}$. The uniform bound on $\lVert \bbD\rVert_1^{-1}$ provided by Lemma \ref{bound moment of b_j and beta_j}, ensures the term $\sup _{z \in\mathcal{C}} \mathbb{E}\operatorname{tr}\bbD_1^{-1} \bar{\bbD}_1^{-1} \leq p \sup_{z\in\mathcal{C}}\mathbb{E}\lVert\bbD_1^{-2}\rVert\lVert\bar{\bbD}_1^{-2}\rVert \leq K n$. Moreover, we have, for all $n$ there exists an $\varrho>0$,
		\begin{align*}
			\left(\frac{\Im\underline{s}_n^0y_n\int\frac{t^2dH_p(t)}{\lvert 1+t\underline{s}_n^0\rvert^2}}{\Im z+\Im\underline{s}_n^0y_n\int\frac{t^2dH_p(t)}{\lvert 1+t\underline{s}_n^0\rvert^2}}\right)^{1/2}\leq 1-\varrho.
		\end{align*}
		The remaining part of the proof is identical to that of  Section 5 in  \cite{BaiS98N}, and is therefore omitted.
	\end{proof}

\begin{appendix}

\section{Proof of (\ref{Qjdecom}).}\label{ProfQj}

The purpose of this section is devoted to the proof of \eqref{Qjdecom}, which is
\begin{align*}
	\sum_{j=1}^{n}\oint_{\mathcal{C}}f^{\prime}(z) (\mathbb{E}_j-\mathbb{E}_{j-1})Q_j(z)dz=o_p(1).
\end{align*}
Using the Markov's inequality and Lemma \ref{a(v)_quadratic_minus_trace},
\begin{align}\label{largeventTay}
	\bP\left(|\epsilon_j(z)\tilde{\beta}_j(z)|\geq1/2\right)\leq \frac{\bE|\epsilon_j(z)\tilde{\beta}_j(z)|^{m}}{2^{m}}=O(n^{-m/2}),
\end{align}
for any $m>0$. Thus, the event $\{|\epsilon_j(z)\tilde{\beta}_j(z)|< 1/2\}$ holds with high probability. Thus, the quantity
$
1+\varepsilon_j(z)\tilde{\beta}_j(z)
$
remains in a domain bounded away from both the negative real axis and the origin with high probability. Thus, with high probability, $\Log(1+\varepsilon_j(z)\tilde{\beta}_j(z))$ is well defined and its Taylor expansion can be applied, which is,
\begin{align}\label{Tayexp}
		\left| \Log(1+\varepsilon_j(z)\tilde{\beta}_j(z))-\varepsilon_j(z)\tilde{\beta}_j(z)\right|\leq K|\varepsilon_j(z)\tilde{\beta}_j(z)|^2.
\end{align}

From the definition of $Q_j$ provided below \eqref{decomposition of random part}, it follows that	
\begin{align}
	\nonumber &\sum_{j=1}^{n}\oint_{\mathcal{C}}f^{\prime}(z) (\mathbb{E}_j-\mathbb{E}_{j-1})Q_j(z)dz \\
	=&  \sum_{j=1}^{n}\oint_{\mathcal{C}}f^{\prime}(z) (\mathbb{E}_j-\mathbb{E}_{j-1})\varepsilon_j(z)(\widetilde{\beta}_j(z)-b_j(z\label{the first term of simplification of Qj}))dz \\
	& +\sum_{j=1}^{n}\oint_{\mathcal{C}}f^{\prime}(z) (\mathbb{E}_j-\mathbb{E}_{j-1})(\Log(1+\varepsilon_j(z)\tilde{\beta}_j(z))-\varepsilon_j(z)\tilde{\beta}_j(z)) dz  \label{the second term of simplification of Qj} 
\end{align}
We proceed to analyze items \eqref{the first term of simplification of Qj}, \eqref{the second term of simplification of Qj} individually.

\textbf{Analysis of \eqref{the second term of simplification of Qj}}. Let $R_j:=\Log(1+\varepsilon_j(z)\tilde{\beta}_j(z))-\varepsilon_j(z)\tilde{\beta}_j(z)$. For any fixed $m\geq 2$, according to Lemma \ref{Burkholder_2},
\begin{align}
	\nonumber &  \mathbb{E}\left| \sum_{j=1}^{n}\oint_{\mathcal{C}}f^{\prime}(z) (\mathbb{E}_j-\mathbb{E}_{j-1})R_j(z) dz \right| \\
	& \leq  K_m \left(\sum_{j=1}^{n}\mathbb{E}\left| \oint_{\mathcal{C}}f^{\prime}(z)(\mathbb{E}_j-\mathbb{E}_{j-1})R_j(z) dz\right|^2\right)^{m/2}.\label{the second term of Taylor expansion 2}
\end{align}
With regards to \eqref{the second term of Taylor expansion 2}, we have
\begin{align*}
	& \left(\sum_{j=1}^{n}\mathbb{E}\left| \oint_{\mathcal{C}}f^{\prime}(z)(\mathbb{E}_j-\mathbb{E}_{j-1})R_j(z) dz\right|^2 \right)^{1/2} \\
	\leq & K_m \left[ \sum_{j=1}^{n}\left[(x_l-x_r)\int_{x_l}^{x_r} \mathbb{E}\left[ \lvert R_j(u+iv_0)\rvert^2  \right] du \right. \right.   +(x_l-x_r)\int_{x_l}^{x_r} \mathbb{E}\left[\lvert R_j(u-iv_0)\rvert^2 \right] du \\
	&\quad\quad+ 2v_0\int_{-v_0}^{v_0} \mathbb{E}\left[\lvert R_j(x_r+iv)\rvert^2 \right] dv  \left. \left. + 2v_0 \int_{-v_0}^{v_0} \mathbb{E}\left[\lvert R_j(x_l+iv)\rvert^2 \right] dv  \right]\right]^{1/2}.
\end{align*}
The inequality above is due to the estimation formulas for complex integrals: for any $z=u+iv$ and integrable function $g(z)$ on $\cal{C}$,
\begin{align*}
	\left|\oint_{\calC}g(z)dz \right|\leq \oint_{\calC}|g(z)| |dz|, \quad\quad\quad \text{where}~ |dz|=\sqrt{(dx)^2+(dy)^2}.
\end{align*}
Next, according to \eqref{Tayexp} and Lemma \ref{Q_QMTr},
\begin{align}\label{R_j(z)^mI<1}
	&  \mathbb{E}\lvert R_j(z)\rvert^m   \leq K_m \mathbb{E}\lvert \varepsilon_j(z)\widetilde{\beta}_j(z)\rvert^{2m} 
	\leq K_m n^{-m}.
\end{align}
Further, the order of \eqref{the second term of Taylor expansion 2} is 
\begin{align}\label{the_second_D_term_of_lemma4.2}
	&  \left[\sum_{j=1}^{n}\mathbb{E}\left| \oint_{\mathcal{C}}f^{\prime}(z)(\mathbb{E}_j-\mathbb{E}_{j-1})R_j(z)\Theta_j(z) dz\right|^2\right]^{1/2} \leq Kn^{-1/2}.
\end{align}

\textbf{Estimation of \eqref{the first term of simplification of Qj}}. Similarly to the proof of \eqref{the second term of Taylor expansion 2},\begin{align}
	\nonumber & \mathbb{E}\left| \sum_{j=1}^{n}\oint_{\mathcal{C}}f^{\prime}(z)(\mathbb{E}_j-\mathbb{E}_{j-1})\varepsilon_j(z)(\widetilde{\beta}_j(z)-b_j(z))dz\right|^m \\
	& \leq  K_m\left(\sum_{j=1}^{n}\mathbb{E}\left| \oint_{\mathcal{C}}f^{\prime}(z)(\mathbb{E}_j-\mathbb{E}_{j-1})\varepsilon_j(z)(\widetilde{\beta}_j(z)-b_j(z))dz\right|^2\right)^{m/2}.\label{the second term of betaj-bj}
\end{align}
We only need to estimate $\mathbb{E}\lvert \varepsilon_j(z)(\widetilde{\beta}_j(z)-b_j(z))\rvert^2$, for $z\in\mathcal{C}$. According to H\"older's inequality, Lemmas \ref{QMTr_1} and \ref{E|beta_j-b_j|},
\begin{align}\label{E|varepsilon(tilde_beta_j-b_j)|m}
	&  \mathbb{E}\lvert \varepsilon_j(z)(\widetilde{\beta}_j(z)-b_j(z))\rvert^2\leq \sqrt{\mathbb{E}\lvert\varepsilon_j(z)\rvert^{4}}\sqrt{\mathbb{E}\lvert \widetilde{\beta}_j(z)-b_j(z)\rvert^{4}} \\
	\nonumber	& \leq  K\sqrt{n^{-2}}\sqrt{n^{-4}}=K n^{-3}.
\end{align}
Based on \eqref{E|varepsilon(tilde_beta_j-b_j)|m}, the estimation of \eqref{the second term of betaj-bj} is derived as
\begin{align*}
	&   \left(\sum_{j=1}^{n}\mathbb{E}\left| \oint_{\mathcal{C}}f^{\prime}(z)(\mathbb{E}_j-\mathbb{E}_{j-1})\varepsilon_j(z)(\widetilde{\beta}_j(z)-b_j(z))dz\right|^2\right)^{1/2} \leq Kn^{-1}.
\end{align*}

\end{appendix}


\begin{thebibliography}{}
\bibitem[Bai, 1999]{Bai99M}
Bai, Z. (1999).
\newblock Methodologies in spectral analysis of large dimensional random matrices, a review.
\newblock {\em Statistica Sinica}, 9(3):611--662.

\bibitem[Bai et~al., 2009]{BaiJ09C}
Bai, Z., Jiang, D., Yao, J., and Zheng, S. (2009).
\newblock Corrections to {LRT} on large-dimensional covariance matrix by {RMT}.
\newblock {\em The Annals of Statistics}, 37(6B):3822--3840.

\bibitem[Bai et~al., 2007]{BaiM07A}
Bai, Z., Miao, B., and Pan, G. (2007).
\newblock On asymptotics of eigenvectors of large sample covariance matrix.
\newblock {\em The Annals of Probability}, 35(4):1532--1572.

\bibitem[Bai and Silverstein, 1998]{BaiS98N}
Bai, Z. and Silverstein, J.~W. (1998).
\newblock No eigenvalues outside the support of the limiting spectral distribution of large-dimensional sample covariance matrices.
\newblock {\em The Annals of Probability}, 26(1):316--345.

\bibitem[Bai and Silverstein, 2004]{BaiS04C}
Bai, Z. and Silverstein, J.~W. (2004).
\newblock {CLT} for linear spectral statistics of large-dimensional sample covariance matrices.
\newblock {\em The Annals of Probability}, 32(1A):553--605.

\bibitem[Bai and Silverstein, 2010]{BaiS10S}
Bai, Z. and Silverstein, J.~W. (2010).
\newblock {\em Spectral Analysis of Large Dimensional Random Matrices}.
\newblock Springer Series in Statistics. Springer New York, New York, NY.

\bibitem[Bai et~al., 2010]{BaiW10F}
Bai, Z., Wang, X., and Zhou, W. (2010).
\newblock Functional {CLT} for sample covariance matrices.
\newblock {\em Bernoulli}, 16(4):1086--1113.

\bibitem[Bai et~al., 2012]{BaiH12C}
Bai, Z., Hu, J., and Zhou, W. (2012) 
\newblock Convergence rates to the {Marchenko-Pastur} type distribution. 
\newblock {\em Stochastic Processes and their Applications}, 122(1):68--92.

\bibitem[Bai and Yin, 1993]{BaiY93L}
Bai, Z. and Yin, Y. (1993).
\newblock Limit of the smallest eigenvalue of a large dimensional sample covariance matrix.
\newblock {\em The Annals of Probability}, 21(3):1275--1294.

\bibitem[Bao et~al., 2015]{BaoL15S}
Bao, Z., Lin, L.-C., Pan, G., and Zhou, W. (2015).
\newblock Spectral statistics of large dimensional spearman's rank correlation
matrix and its application.
\newblock {\em The Annals of Statistics}, 43(6):2588--2623.

\bibitem[Bao and He, 2023]{BaoH23Q}
Bao, Z. and He, Y. (2023).
\newblock Quantitative {CLT} for linear eigenvalue statistics of {Wigner} matrices.
\newblock {\em The Annals of Applied Probability}, 33(6B):5171--5207.

\bibitem[Bao et~al., 2024]{BaoH24S}
Bao, Z., Hu, J., Xu, X., and Zhang, X. (2024).
\newblock Spectral statistics of sample block correlation matrices.
\newblock {\em The Annals of Statistics}, 52(5):1873--1898.

\bibitem[Bao and George, 2024]{BaoG24U}
Bao, Z. and George, D.~M. (2024).
\newblock Ultra high order cumulants and quantitative CLT for polynomials in
random matrices.
\newblock  arxiv preprint arXiv:2411.11341.

\bibitem[Berezin and Bufetov, 2021]{BerezinB21R}
Berezin, S. and Bufetov, A.~I. (2021).
\newblock On the rate of convergence in the central limit theorem for linear statistics of {Gaussian}, {Laguerre}, and {Jacobi} ensembles.
\newblock {\em Pure and Applied Functional Analysis}, 6(1):57--99.

\bibitem[Billingsley, 1995]{Billingsley95}
Billingsley, P. (1995).
\newblock {\em Probability and Measrue}, 3rd ed. Wiley, New York.

\bibitem[Burkholder, 1973]{Burkholder73D}
Burkholder, D. L. (1973).
\newblock Distribution function inequalities for martingales.
\newblock {\em The Annals of Probability}, 1(1):19--42.

\bibitem[Chen, 1981]{Chen81B}
Chen, X. (1981).
\newblock {Berry-Esseen} bounds for error variance estimates in linear models.
\newblock {\em Scientia Sinica}, 24(7):899--913.

\bibitem[Cui et~al., 2025]{CuiH25R}
Cui, J., Hu, J., Bai, Z., and Hu, G. (2025).
\newblock On the rate of convergence in the CLT for LSS of large-dimensional sample covariance matrices.
\newblock  arxiv preprint arXiv:2506.02880.

\bibitem[Diaconis and Shahshahani, 1994]{DiaconisS94E}
Diaconis, P. and Shahshahani, M. (1994).
\newblock On the eigenvalues of random matrices.
\newblock {\em Journal of Applied Probability}, 31:49--62.

\bibitem[Ding and Wang, 2023]{DingW25G}
Ding, X. and Wang, Z. (2023).
\newblock Global and local {CLT}s for linear spectral statistics of general sample covariance matrices when the dimension is much larger than the sample size with applications.
\newblock arxiv preprint arXiv:2308.08646.

\bibitem[D{\"o}bler and Stolz, 2014]{DoblerS14Q}
D{\"o}bler, C. and Stolz, M. (2014).
\newblock A quantitative central limit theorem for linear statistics of random matrix eigenvalues.
\newblock {\em Journal of Theoretical Probability}, 27(3):945--953.

\bibitem[Hu et~al., 2019]{HuL19H}
Hu, J., Li, W., Liu, Z., and Zhou, W. (2019).
\newblock High-dimensional covariance matrices in elliptical distributions with application to spherical test.
\newblock {\em The Annals of Statistics}, 47(1):527--555.

\bibitem[Johansson, 1997]{Johansson97R}
Johansson, K. (1997).
\newblock On random matrices from the compact classical groups.
\newblock {\em Annals of Mathematics}, 145(3):519--545.

\bibitem[Jonsson, 1982]{Jonsson82Lb}
Jonsson, D. (1982).
\newblock Some limit theorems for the eigenvalues of a sample covariance matrix.
\newblock {\em Journal of Multivariate Analysis}, 12(1):1--38.

\bibitem[Lambert et~al., 2019]{LambertL19Q}
Lambert, G., Ledoux, M., and Webb, C. (2019).
\newblock Quantitative normal approximation of linear statistics of $\beta$--ensembles.
\newblock {\em The Annals of Probability}, 47(5):2619--2658.

\bibitem[Li et~al., 2016]{LiB16C}
Li, H., Bai, Z., and Hu, J. (2016).
\newblock Convergence of empirical spectral distributions of large dimensional quaternion sample covariance matrices.
\newblock {\em Annals of the Institute of Statistical Mathematics}, 68(4):765--785.

\bibitem[Liu et~al., 2023]{LiuH23C}
Liu, Z., Hu, J., Bai, Z., and Song, H. (2023).
\newblock A {CLT} for the {LSS} of large-dimensional sample covariance matrices with diverging spikes.
\newblock {\em The Annals of Statistics}, 51(5):2246--2271.

\bibitem[Lytova and Pastur, 2009]{LytovaP09C}
Lytova, A. and Pastur, L. (2009).
\newblock Central limit theorem for linear eigenvalue statistics of random matrices with independent entries.
\newblock {\em The Annals of Probability}, 37(5):1778--1840.

\bibitem[Mar{\v c}enko and Pastur, 1967]{MarcenkoP67D}
Mar{\v c}enko, V. A. and Pastur, L. A. (1967).
\newblock Distribution of eigenvalues for some sets of random matrices. 
\newblock {\em Mathematics of the USSR-Sbornik}, 1(4):457--483.

\bibitem[Najim and Yao, 2016]{NajimY16G}
Najim, J. and Yao, J. (2016).
\newblock Gaussian fluctuations for linear spectral statistics of large random covariance matrices.
\newblock {\em The Annals of Applied Probability}, 26(3):1837--1887. 

\bibitem[Pan and Zhou, 2008]{PanZ08C}
Pan, G. and Zhou, W. (2008).
\newblock Central limit theorem for signal-to-interference ratio of reduced rank linear receiver.
\newblock {\em The Annals of Applied Probability}, 18(3):1232--1270.

\bibitem[Schnelli and Xu, 2023]{SchnelliX23C}
Schnelli, K. and Xu, Y. (2023).
\newblock Convergence rate to the {T}racy--{W}idom laws for the largest eigenvalue of sample covariance matrices.
\newblock {\em The Annals of Applied Probability}, 33(1):677--725.

\bibitem[Shcherbina, 2011]{Shcherbina11C}
Shcherbina, M. (2011).
\newblock Central limit theorem for linear eigenvalue statistics of the {W}igner and sample covariance random matrices.
\newblock {\em Zhurnal Matematicheskoi Fiziki, Analiza, Geometrii}, 7(2):176--192.

\bibitem[Silverstein, 1995]{Silverstein95S}
Silverstein, J.~W. (1995).
\newblock Strong convergence of the empirical distribution of eigenvalues of large dimensional random matrices.
\newblock {\em Journal of Multivariate Analysis}, 55(2):331--339.

\bibitem[Silverstein and Choi, 1995]{SilversteinC95A}
Silverstein, J.~W. and Choi, S.~I. (1995).
\newblock Analysis of the limiting spectral distribution of large dimensional random matrices.
\newblock {\em Journal of Multivariate Analysis}, 54(2):295--309.

\bibitem[Yin et~al., 1988]{YinB88L}
Yin, Y., Bai, Z., and Krishnaiah, P.~R. (1988).
\newblock On the limit of the largest eigenvalue of the large dimensional sample covariance matrix.
\newblock {\em Probability Theory and Related Fields}, 78(4):509--521.

\bibitem[Zheng et~al., 2015]{ZhengB15S}
Zheng, S., Bai, Z., and Yao, J. (2015).
\newblock Substitution principle for {CLT} of linear spectral statistics of high-dimensional sample covariance matrices with applications to hypothesis testing.
\newblock {\em The Annals of Statistics}, 43(2):546--591.
\end{thebibliography}
\end{document}